\def\captionof#1#2{{\def\@captype{#1}#2}}
\newcounter{tablegroup}
\newcounter{subtable}[tablegroup]
\newtheorem{thm}{Theorem}[section]
\newtheorem{cor}[thm]{Corollary}
\newtheorem{lem}[thm]{Lemma}
\newtheorem{prop}[thm]{Proposition}
\newtheorem{Ass}[thm]{Assumption}
\newtheorem{defn}[thm]{Definition}
\newtheorem{rem}[thm]{\bf Remark}
\numberwithin{equation}{section}
\begin{document}
\title{Li-Yorke chaos for dendrite maps with zero topological
entropy and $\omega$-limit sets}
\author{Ghassen Askri   }
\address{Ghassen Askri, University of Carthage, Faculty of Sciences of
Bizerte, Department of Mathematics, Jarzouna, 7021, Tunisia}
 \email{askri.ghassen@hotmail.fr}
%
%
%
\keywords{Dendrite, dendrite map, $\omega$-limit set, decomposition,
Li-Yorke pair, Li-Yorke chaos}

\begin{abstract} Let $X$ be a dendrite  with  set of endpoints $E(X)$
  closed  and let $f:~X \to X$ be a continuous map with zero topological entropy.
   Let $P(f)$ be the set of periodic points of $f$. We prove that if $L$ is an infinite
    $\omega$-limit set of $f$ then $L\cap P(f)\subset E(X)^{\prime}$,
    where $E(X)^{\prime}$ is the set of all accumulations points of $E(X)$. Furthermore, if $E(X)$
is countable and $L$ is uncountable then $L\cap P(f)=\emptyset$. We
also show  that if $E(X)^{\prime}$ is finite then any uncountable
$\omega$-limit set of $f$ has a decomposition and as a consequence
if $f$ has a Li-Yorke  pair $(x,y)$ with $\omega_f(x)$ or
$\omega_f(y)$ is uncountable then $f$ is Li-Yorke chaotic.

\end{abstract}
\maketitle
\section{\bf Introduction}
\bigskip

\medskip


Let $X$ be a compact metric space with metric $d$ and  $f:
X\longrightarrow X$ be a continuous map. Let $\mathbb{Z}_{+}$ and
$\mathbb{N}$ be the sets of non-negative integers and positive
integers respectively. Denote by $f^{n}$ the $n$-th iterate of $f$;
that is,  $f^{0}$ is the identity map, and $f^{n} = f\circ f^{n-1}$
if $n\geq 1$. For any $x\in X$ the subset $O_{f}(x)= \{f^{n}(x): \
n\in\mathbb{Z}_{+}\}$ is called the \textit{$f$-orbit of $x$}. A
point $x\in X$  is called \textit{periodic} of prime period
$n\in\mathbb{N}$ if $f^{n}(x)=x$ and $f^{i}(x)\neq x$ for $1\leq
i\leq n-1$.
  We denote by $Fix(f)$ (resp. $P(f)$) the set of fixed
  points (resp. periodic points) of $f$. Let $A$ be a non empty subset of $X$. It is called periodic with period $p\geq 1$ if $A, f(A), \dots, f^{p-1}(A)$ are pairwise disjoint and $f^{p}(A)=A$. For any $x\in
  X$, we denote by $\omega_f (x) =\cap_{n\geq 0} \overline{O_{f}(f^n
  (x))}$ the omega limit set of $x$.
 A pair $ (a,b)$ in $X^2$ is called \textit{proximal} if  $\underset{n \to
 +\infty}{\liminf} \ d(f^{n}(a),f^{n}(b))=0 $, it is called
   \textit{distal} if   $\underset{n \to
  +\infty}{\liminf} \ d(f^{n}(a),f^{n}(b))>0$  and it is called
  \textit{asymptotic} if $\underset{n \to
  +\infty}{\limsup} \ d(f^{n}(a),f^{n}(b))=0$.
 A pair $ (a,b)$ in $X^2$ is called a \textit{Li-Yorke} pair (of $ f $) if it is proximal
 but not asymptotic. A subset $ S $ of $ X $ with at least two points is a
 \textit{scrambled }set (of $f$) if
 any proper pair $(a,b)\in
  S^{2}$ is a Li-Yorke pair. A continuous map $f:~X\to X$ is called \emph{Li-Yorke chaotic}  if
  it has an \emph{uncountable} scrambled set. Denote by $h(f)$ the topological entropy of $f$ (See \cite{AKM}, \cite{Bow}, \cite{Din}).
For any non empty  subset $F$ of a compact metric space $X$, the set
of accumulation points of $F$,  denoted  by $F^{\prime}$, is called
the \emph{derived set} of $F$. More generally, for any $n\geq 1$, we
define $F^{(n)}=(F^{(n-1)})^{\prime}$ the $n$-th derivative of $F$,
where $F^{0}=F$.  If $F$ is closed then $F^{\prime}$ is a closed
subset of $F$. A \emph{continuum} is a compact connected metric
space. An arc is any space homeomorphic to the compact interval
$[0,1]$. A topological space is \emph{arcwise} connected if any two
of its points can be joined by an arc. We use the terminology from
Nadler \cite{Nad}.
 By
a \textit{dendrite} $X$, we mean a locally connected continuum which
contains no homeomorphic copy of a circle i.e simple closed curve.
Every sub-continuum of a dendrite is a dendrite (\cite{Nad}, Theorem
10.10) and every connected subset of $X$ is arcwise connected
(\cite{Nad}, Proposition 10.9). Let $x\in  X$. The number of
connected components of $X \backslash \{x\}$, denoted  $ord(x,X)$,
is called the order of $x$ in $X$. If $ord(x,X)=1$ (resp.
$ord(x,X)=2$, resp. $ord(x,X) \geq 3 $) then $x$ is called and
\emph{endpoint} (resp. \emph{cut point}, resp. \emph{branch point})
of $X$. If there is no confusion, we denote  $ord(x)$ instead of
$ord(x,X)$. We denote by $E(X)$ (resp. $B(X)$) the set of endpoints
(resp. branch points) of $X$. A tree, is a dendrite with finitely
many endpoints.
 In addition, any two
distinct points $x,y$ of a dendrite $X$ can be joined by a unique
arc with endpoints $x$ and $y$, denote this arc by $[x,y]$ and let
$[x,y)=[x,y]\setminus\{y\}$ (resp. $(x,y]=[x,y]\setminus\{x\}$ and
$(x,y)=[x,y]\setminus\{x,y\}$). A \emph{free arc} in a dendrite is
an arc containing no branch point.

  A continuous map from a dendrite into itself is
called a \textit{dendrite map}.
  For any closed subset $F$ of $X$, we call  the \emph{convex hull} of $F$,
   noted $[F]$, the intersection of all subdendrites of $X$ containing $F$.


The $\omega$-limit sets play an important role in studying dynamical
systems. Sarkovski  \cite{Sar}, proved that if $f:~[0,1] \to [0,1]$
is a continuous map with zero topological entropy then any infinite
$\omega$-limit set contain no periodic point. This result remain
true for graph maps (in particular for tree maps) (\cite{HM},
Theorem 13). In this paper, we firstly study this question for
dendrite maps  with closed set of endpoints (See Theorem
\textbf{A}).  Secondly,  Smital showed in (\cite{Sm}, Theorem 3.5)
that if $L=\omega_f(x)$ is an infinite $\omega$-limit set  of an
interval map with zero topological entropy
 (in fact $L$ is uncountable) then  there is a sequence $(J_k)_{k\geq 1}$ of $f$-periodic intervals
 with the following properties: For any $k$,
 \begin{enumerate}
 \item $J_k$ has period $2^k$,
 \item $J_{k+1}\cup f^{2^k}(J_{k+1})\subset J_k$,
 \item $L\subset \cup_{i=0}^{2^k -1}f^{i}(J_k)$,
 \item $L\cap f^{i}(J_k) \neq \emptyset$ for every $i=0,1,\dots, 2^k -1$.
 \end{enumerate}
 We will extend this result to dendrite maps $f:~X \to X$ with zero topological entropy, where $E(X)$ is closed and $E(X)^{\prime}$ finite. This holds in particular if $X$ is a tree. (See Theorem \textbf{B}).\\
 \smallskip

 In the third part of the paper, we study the question: Does Li-Yorke
 pair implies chaos for dendrite maps?  Kuchta and Smital \cite{KS} proved that the existence of Li-Yorke pair   implies chaos for interval maps. In \cite{RS}, Ruette and Snoha proved that the same conclusion holds for graph maps. An example of a triangular map in the square (resp. on the Cantor set and the Warsaw circle)  answering negatively the question is found in \cite{FPS} (resp. \cite{GL}).
Here we give some examples of dendrites maps with countable set of
endpoints having a Li-Yorke pair but  not Li-Yorke chaotic.

Our main results are the following:

\maketitle

\medskip

\textbf{Theorem A.}  Let $X$ be a dendrite with $E(X)$  closed and
let $f:~X \to X$ be a dendrite map with zero topological entropy.
Let $x\in X$. Then  we have
  \begin{enumerate}
 \item  If $\omega_f (x)$
  is infinite then $\omega_f (x) \cap P(f) \subset  E(X)^{\prime}$,
\item   If $E(X)$ is countable and $\omega_f (x)$ is
uncountable then $ \omega_f (x) \cap P(f)=\emptyset$.
\end{enumerate}

\medskip

\textbf{Remark.}
\begin{enumerate}
\item The condition  $\omega_f(x)\cap P(f)\neq \emptyset$ can occur,
we built a dendrite map $f:~X \to X$ with zero topological entropy
having an infinite $\omega$-limit set $\omega_f(x)$ containing a
periodic point,  where $E(X)$ is a closed countable set and
$\omega_f(x)$ is infinite countable. (See Example 1).
\item There is a dendrite map $f:~X\to X$ with zero topological entropy
having an  $\omega$-limit set $\omega_f(x)$ containing a periodic
point with $E(X)$  non closed and countable  and $\omega_f(x)$
uncountable. (See Example 2).

\item There is a dendrite map $f:~X \to X$ with zero topological entropy
 having an uncountable
$\omega$-limit set $\omega_f(x)$ containing a periodic point with
$E(X)$   closed and uncountable. (See Example 3).

\end{enumerate}
\medskip

\textbf{Theorem B.}\label{dec} Let $X$ be a dendrite such that
$E(X)$ is  closed set having finitely many accumulation points and
let $f:~X\to X$ be a dendrite map with zero topological entropy. Let
$L$ be an uncountable $\omega$-limit set. Then there is a sequence
of $f$-periodic subdendrites $(D_k)_{k\geq 1}$ of $X$ and a sequence
of integers $n_k\geq 2$ for every $k\geq 1$  with the followings
properties: $\forall k\geq1$,
\begin{enumerate}
\item $D_k$ has period $\alpha_k:=n_1 n_2 \dots n_k$,
\item $\cup_{k=0}^{n_j -1}f^{k \alpha_{j-1}}(D_{j}) \subset D_{j-1}$; $j\geq 2$,
\item $L \subset \cup_{i=0}^{\alpha_k -1}f^{i}(D_k)$,
\item $f(L \cap f^{i}(D_k))=L\cap f^{i+1}(D_k); 0\leq i \leq \alpha_{k}-1$. In particular $L \cap f^{i}(D_k)\neq \emptyset$, $\forall 0\leq i \leq \alpha_{k}-1$,
\item $\forall 0\leq i\neq j<\alpha_k $, $f^{i}(D_k)\cap f^{j}(D_k)$ has empty interior.
\end{enumerate}
\medskip

\textbf{Corollary 1.} Let  $X$ be a tree and $f:~X\to X$ a
continuous map with zero topological entropy. Let $L=\omega_f(x)$ an
infinite $\omega$-limit set. Then  there is a sequence $(J_k)_{k\geq
1}$ of $f$-periodic arcs
 and
a sequence of integers $n_k\geq 2$ for every $k\geq 1$ with the
following properties: For any $k$,
 \begin{enumerate}
 \item $J_k$ has period $\alpha_k:=n_1 n_2 \dots n_k$,
 \item $\cup_{k=0}^{n_j -1}f^{k \alpha_{j-1}}(J_{j}) \subset J_{j-1}$; $j\geq 2$,
\item $L \subset \cup_{i=0}^{\alpha_k -1}f^{i}(J_k)$,
\item $f(L \cap f^{i}(J_k))=L\cap f^{i+1}(J_k); 0\leq i \leq \alpha_{k}-1$. In particular $L \cap f^{i}(J_k)\neq \emptyset$, $\forall 0\leq i \leq \alpha_{k}-1$,
\item $\forall 0\leq i\neq j<\alpha_k $, $f^{i}(J_k)\cap f^{j}(J_k)$ are either disjoint or they intersect in their common endpoints.
 \end{enumerate}
 \medskip

\medskip

\textbf{Theorem C.} \label{lyc} Let $X$ be a dendrite with $E(X)$
closed and
 $E(X)^{\prime}$ finite. Let $f:~X\to X$ be a dendrite map. If $f$ has a Li-Yorke pair $(x,y)$
such that  $\omega_f (x)$ or $\omega_f(y)$ is uncountable then $f$ is Li-Yorke chaotic.\\
\medskip

\textbf{Remark.}  Theorem \textbf{C} is not always true if $X$ is a
dendrite with $E(X)$ closed countable with $E(X)^{\prime}$ infinite.
(See Example $4$).

\bigskip


\section{\textbf{Preliminaries}}
\bigskip

\begin{lem} \label{dist} $($\cite{MS}, Lemma $2.1)$
Let $(X,d)$ be a dendrite. Then, for every $\varepsilon>0$, there
exists $\delta=\delta(\varepsilon)>0$ such that, for any $x, y\in X$
with
 $d(x,y)\leq \delta$, the diameter \emph{diam}$([x,y])<\varepsilon$.
\end{lem}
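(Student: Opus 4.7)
The plan is to combine three ingredients: local connectedness of the dendrite (to produce small connected neighborhoods), compactness of $X$ (to make the choice of size uniform via a Lebesgue number), and the uniqueness of arcs in a dendrite (to trap the whole arc $[x,y]$ inside such a neighborhood).

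Concretely, I would fix $\varepsilon>0$ and, for each $p\in X$, use local connectedness to choose a connected open set $U_p$ with $p\in U_p\subset B(p,\varepsilon/3)$, so that $\mathrm{diam}(U_p)<\varepsilon$. The family $\{U_p\}_{p\in X}$ is then an open cover of the compact metric space $X$, and the Lebesgue number lemma yields $\delta=\delta(\varepsilon)>0$ such that every pair $x,y\in X$ with $d(x,y)\le\delta$ is contained in some single $U_p$.

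The remaining, genuinely dendrite-theoretic step is the inclusion $[x,y]\subset U_p$. Since $U_p$ is a connected subset of the dendrite $X$, it is arcwise connected by \cite{Nad}, Proposition 10.9, so there exists an arc in $U_p$ joining $x$ to $y$. By the uniqueness of the arc between two points of a dendrite (recalled in the introduction), this arc must coincide with $[x,y]$; hence $[x,y]\subset U_p$ and
\[
\mathrm{diam}([x,y])\le \mathrm{diam}(U_p)<\varepsilon,
\]
which is exactly the conclusion.

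I do not foresee a serious obstacle: the argument is purely topological and uses only facts about dendrites already stated in the introduction (connected subsets are arcwise connected, and arcs between two fixed points are unique). The one point that requires a little care is to ensure $\mathrm{diam}(U_p)$ is \emph{strictly} less than $\varepsilon$, which is why I shrink inside $B(p,\varepsilon/3)$ rather than $B(p,\varepsilon/2)$; if one preferred a contradiction argument using sequences $x_n,y_n\to p$ with $\mathrm{diam}([x_n,y_n])\ge\varepsilon$, the same uniqueness-of-arc idea applied inside a small connected neighborhood of $p$ would again deliver the contradiction.
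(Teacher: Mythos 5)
Your proof is correct. The paper gives no proof of this lemma (it is quoted from \cite{MS}, Lemma 2.1), and your argument --- small connected open neighborhoods from local connectedness, a Lebesgue number from compactness, and uniqueness of arcs to force $[x,y]$ into the chosen neighborhood --- is the standard one for this statement and is essentially the argument of the cited reference.
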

 Here diam$(A):=\sup_{x,y\in A}d(x,y)$ where $A$ is a non empty subset
  of $(X,d)$.

\begin{cor}  \label{J}
Let $(X, d)$ be a dendrite. Then, for every $x, y\in X; x\neq y$,
there is $\varepsilon > 0$ and an arc $J\subset [x,y]$ such that for
any $u\in B(x,\varepsilon) $
  and $v\in
 B(y,\varepsilon)$ we have $J \subset [u,v]$.
\end{cor}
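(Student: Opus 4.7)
The plan is to take $J=[a,b]$, where $a,b$ are any two distinct interior points of the arc $[x,y]$, and to exploit the fact that such interior points are cut points of $X$ separating $x$ from $y$; local connectedness of $X$ then yields the required $\varepsilon$, and uniqueness of arcs in a dendrite will force $[a,b]\subset[u,v]$. Lemma~\ref{dist} is, in fact, not needed.

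In more detail, I first pick $a,b\in(x,y):=[x,y]\setminus\{x,y\}$ with $a\neq b$, ordered as $x,a,b,y$ along the arc $[x,y]$ (possible since $x\neq y$ and $[x,y]$ is homeomorphic to $[0,1]$), and set $J:=[a,b]$. Because $[x,y]$ is the \emph{unique} arc from $x$ to $y$ in the dendrite $X$, removing $a$ separates $x$ from $y$: otherwise the two points would lie in a single component of $X\setminus\{a\}$, which is arcwise connected (every connected subset of a dendrite is arcwise connected), yielding a second arc from $x$ to $y$ avoiding $a$ and contradicting uniqueness. Thus $x$ and $y$ lie in distinct components $U_a^x\ni x$ and $U_a^y\ni y$ of $X\setminus\{a\}$, and these components are open, since $X\setminus\{a\}$ is open and $X$ is locally connected. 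The same argument applied to $b$ gives open components $U_b^x\ni x$ and $U_b^y\ni y$ of $X\setminus\{b\}$.

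Now I choose $\varepsilon>0$ so small that $B(x,\varepsilon)\subset U_a^x\cap U_b^x$ and $B(y,\varepsilon)\subset U_a^y\cap U_b^y$. For any $u\in B(x,\varepsilon)$ and $v\in B(y,\varepsilon)$, the point $a$ separates $u$ from $v$ in $X$ (they lie in different components of $X\setminus\{a\}$), so $a\in[u,v]$; the same holds for $b$. Since $[u,v]$ is an arc and arcs in $X$ are unique, the subarc of $[u,v]$ joining $a$ and $b$ must equal $[a,b]$, so $J=[a,b]\subset[u,v]$, as required.

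To my mind the only mild subtlety — the main point to pin down carefully — is the purely topological step that a point in the interior of an arc really is a cut point of the surrounding dendrite, and that the components of $X\setminus\{a\}$ are open. Both are standard consequences of uniqueness of arcs and local connectedness, and no real calculation is involved.
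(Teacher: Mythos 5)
Your proof is correct, and it takes a genuinely different route from the paper's. The paper fixes two \emph{disjoint subdendrites} $U\ni x$ and $V\ni y$, observes that $U\cap[x,y]=[x,c_1]$ and $V\cap[x,y]=[c_2,y]$ are disjoint subarcs, and then uses the first point (retraction) map $r_{[x,y]}$ to decompose $[u,v]$ as $[u,u']\cup[u',v']\cup[v',v]$ with $u'\in[x,c_1]$ and $v'\in[c_2,y]$, whence $J:=[c_1,c_2]\subset[u',v']\subset[u,v]$. You instead argue via separation: any interior point $a$ of $[x,y]$ is a cut point separating $x$ from $y$ (by uniqueness of arcs), the components of $X\setminus\{a\}$ are open by local connectedness, and once $u,v$ lie in the respective components one gets $a\in[u,v]$ directly (an arc missing $a$ would be a connected subset of $X\setminus\{a\}$ joining the two components); applying this to two interior points $a,b$ and invoking uniqueness of the arc $[a,b]$ gives $J=[a,b]\subset[u,v]$. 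Your argument is more elementary (no retraction map, no appeal to connectedness of intersections of subcontinua) and slightly more flexible, since it shows that \emph{any} closed subarc of the open arc $(x,y)$ can serve as $J$ for a suitable $\varepsilon$; the paper's version produces a specific $J$ tied to the chosen neighborhoods $U,V$ but packages the useful decomposition $[u,v]=[u,u']\cup[u',v']\cup[v',v]$, which is the tool reused implicitly elsewhere. You are also right that Lemma~\ref{dist} plays no role here in either proof.
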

\begin{proof}
Fix $x,y \in X; x\neq y$. Let $U$ and $V$ two disjoint subdendrites
of $X$  such that $x\in int(U)$ and $y \in int(V)$ (where $int(A)$
denote the interior of the subset $A$). Since $U, V$ and $[x,y]$ are
connected then by Theorem 10.10 of \cite{Nad}, $U\cap[x,y]$ and
$V\cap [x,y]$ are also connected. Let $c_1, c_2\in X$  satisfying
$U\cap[x,y]=[x,c_1]$ and
 $V\cap [x,y]=[c_2,y]$. We have
 $[x,c_1] \cap [y,c_2]\subset U\cap V =\emptyset$ so $[x,c_1] \cap
 [y,c_2] =\emptyset$.

Let $u\in U$ and $v\in V$ arbitrarily and denote by $u^{\prime}=
r_{[x,y]}(u), v^{\prime}=r_{[x,y]}(v)$ where $r_{[x,y]}$ is the
first point map for $[x,y]$ (See \cite{Nad}, page 176).
  We have $u^{\prime}\in [x,u]\cap [x,y]\subset U\cap [x,y]=[x,c_1]$.
  Similarly, $v^{\prime}\in [y,c_2]$. Since  $[u,u^{\prime})\cap [x,y]=
  [v,v^{\prime})\cap [x,y]=\emptyset $ (If $a=b$ then $[a,b)=\emptyset$)
  then $[u,u^{\prime}), [v,v^{\prime})$ and $[u^{\prime},v^{\prime}]$
   are pairwise disjoint so $[u,v]=[u,u^{\prime}] \cup [u^{\prime},
   v^{\prime}]\cup [v^{\prime},v]$. Finely, since $u^{\prime}\in [x,c_1],
    v^{\prime}
  \in [c_2,y]$ then $[u^{\prime},v^{\prime}]$ contains $J:=[c_1,c_2]$
  (which is independent from $u,v$) so $J\subset [u,v]$. By taking $\varepsilon>0$ with $B(x,\varepsilon)\subset U$  and $B(y,\varepsilon)\subset V$, we finish the proof of the
  corollary.
  \end{proof}

\begin{lem}  \label{most}
 Let $X$ be a dendrite and $C_1, C_2 $ two disjoint connected subsets
 in $X$. Then $\overline{C_1}\cap \overline{C_2}$ is at most one  point.

\end{lem}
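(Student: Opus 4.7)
The plan is to argue by contradiction. Suppose $\overline{C_1}\cap\overline{C_2}$ contains two distinct points $a,b$; I aim to exhibit a nondegenerate arc that lies simultaneously in $C_1$ and in $C_2$, which will contradict the hypothesis $C_1\cap C_2=\emptyset$.

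First I apply Corollary \ref{J} to the pair $a\ne b$ to obtain $\varepsilon>0$ and a nondegenerate arc $J\subset[a,b]$ with the property that $J\subset[u,v]$ for every $u\in B(a,\varepsilon)$ and every $v\in B(b,\varepsilon)$. Shrinking $\varepsilon$ if needed, I can also arrange that $B(a,\varepsilon)\cap B(b,\varepsilon)=\emptyset$, so that any chosen $u$ and $v$ are automatically distinct. The crucial feature is that this $J$ depends only on $a$ and $b$, not on $C_1$ or $C_2$.

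Since $a,b\in\overline{C_1}$, pick $u_1\in C_1\cap B(a,\varepsilon)$ and $v_1\in C_1\cap B(b,\varepsilon)$. Because $C_1$ is connected in the dendrite $X$, it is arcwise connected by Proposition $10.9$ of \cite{Nad}, so there is an arc contained in $C_1$ joining $u_1$ to $v_1$. Since any two distinct points of a dendrite are connected by a \emph{unique} arc, that arc must coincide with $[u_1,v_1]$, giving $[u_1,v_1]\subset C_1$; invoking the defining property of $J$ we conclude $J\subset[u_1,v_1]\subset C_1$. Repeating the same argument with $C_2$ in place of $C_1$ yields $J\subset C_2$, so $J\subset C_1\cap C_2=\emptyset$, the desired contradiction. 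The only delicate step is the identification of the abstract arc in $C_i$ (furnished by arcwise connectedness) with the dendrite arc $[u_i,v_i]$; this is what allows the fixed arc $J$ from Corollary \ref{J} to be transferred inside each $C_i$. Everything else is routine bookkeeping.
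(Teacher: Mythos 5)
Your proof is correct and takes essentially the same route as the paper's: apply Corollary \ref{J} to the two points $a,b$ and use arcwise connectedness (plus uniqueness of arcs in a dendrite) to force the fixed arc $J$ into both $C_1$ and $C_2$. The paper selects the approximating points via convergent sequences rather than directly from the balls, and begins with an unused observation that $[a,b]\subset\overline{C_1}\cap\overline{C_2}$, but these are only cosmetic differences.
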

\begin{proof} Suppose that $\overline{C_1}\cap \overline{C_2}$
contains at least two distinct points $a,b$. Then by Theorem 10.10
of \cite{Nad} this intersection is connected, so it is arcwise
connected. Hence we obtain $[a,b]\subset \overline{C_1}\cap
\overline{C_2}$. There is four sequences $(a_n)_{n\geq 1}$,
$(b_n)_{n\geq 1}$ in $C_1$ and  $(a^{\prime}_n)_{n\geq 1}$,
$(b^{\prime}_n)_{n\geq 1}$ in $C_2$ such that $(a_n)_{n\geq 1}$,
$(a^{\prime}_n)_{n\geq 1}$ converges to $a$ and $(b_n)_{n\geq 1}$,
$(b^{\prime}_n)_{n\geq 1}$ converges to $b$.
 By Corollary \ref{J}, there is $\varepsilon>0$ and an arc $J\subset [a,b]$
 such that $J\subset [u,v]$ for any $u\in B(a,\varepsilon)$ and
 $v\in B(b,\varepsilon)$. There is $n>0$ such that $a_n, a^{\prime}_n \in B(a,\varepsilon)$ and $b_n, b^{\prime}_n \in B(b,\varepsilon)$.
 So we have $J \subset [a_n,b_n] \cap [a^{\prime}_n,b^{\prime}_n]$. Since $C_1$ and $C_2$ are arcwise connected then $[a_n,b_n]\subset C_1$ and
$[a^{\prime}_n,b^{\prime}_n]\subset C_2$ then $J\subset C_1 \cap
C_2$, absurd.

\end{proof}

\begin{lem} \label{hull}
Let $X$ be a dendrite and $F$ a  non empty closed subset of $X$. Let
$a\in F$, then
\begin{enumerate}
\item  $[F]=\cup_{z\in F }[a,z]$,
\item $E([F]) \subset F$ and we have $E([F])=F$ when $F\subset E(X)$.
\end{enumerate}
\end{lem}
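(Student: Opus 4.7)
The plan is to handle part (1) first, since part (2) hinges on it.

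For part (1), I would prove that the star-like set $Y:=\bigcup_{z\in F}[a,z]$ equals $[F]$. The inclusion $Y\subset[F]$ is immediate: $[F]$ is a subdendrite containing $F\cup\{a\}$, and within a subdendrite the unique arc joining any two of its points is contained in it, hence $[a,z]\subset[F]$ for every $z\in F$. For the reverse inclusion I would verify that $Y$ is itself a subdendrite containing $F$. Containment is trivial, and connectedness is obvious from the star-shape at $a$. The delicate point is closedness: given $y_n\in[a,z_n]$ with $y_n\to y$, I would extract a subsequence with $z_n\to z\in F$ (using compactness of $F$) and then invoke the continuity of the arc map $(u,v)\mapsto[u,v]$ in the Hausdorff metric on a dendrite to get $[a,z_n]\to[a,z]$, so $y\in[a,z]\subset Y$. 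Alternatively, one can imitate the first-point retraction technique from the proof of Corollary~\ref{J} to avoid citing Hausdorff continuity directly. Once $Y$ is closed and connected in the compact space $X$, it is a subcontinuum of a dendrite, hence a subdendrite, so $[F]\subset Y$.

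For part (2), I would prove $E([F])\subset F$ by contraposition via part (1). If $x\in[F]\setminus F$, then $x\in[a,z]$ for some $z\in F$, and since $x\ne a$ (as $a\in F$) and $x\ne z$ (as $x\notin F$), the point $x$ lies in the open arc $(a,z)$. Then $[F]\setminus\{x\}$ has at least two components, one meeting $a$ and one meeting $z$, giving $\mathrm{ord}(x,[F])\ge 2$ and so $x\notin E([F])$. For the reverse inclusion under the hypothesis $F\subset E(X)$, I would use the characterization that a point of a dendrite is an endpoint iff no nondegenerate arc of the dendrite contains it as an interior point: if some $x\in F$ failed to lie in $E([F])$, a nondegenerate arc inside $[F]\subset X$ would contain $x$ in its interior, contradicting $x\in E(X)$.

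The step I expect to be the main obstacle is the closedness of $Y$ in part (1); everything else is either definitional or a direct consequence of the arc characterization of order. The cleanest route is to invoke Hausdorff continuity of the arc map, but a self-contained substitute using the first-point retraction from Corollary~\ref{J} is available if one prefers an elementary argument.
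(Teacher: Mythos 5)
Your proposal is correct and follows essentially the same route as the paper: part (1) is proved there by showing the star $\bigcup_{z\in F}[a,z]$ is a connected closed set containing $F$, with the closedness step done by the direct $\varepsilon$-argument via Lemma \ref{dist} --- i.e.\ exactly the elementary substitute you mention in place of citing Hausdorff continuity of the arc map. Part (2) in the paper is the direct form of your contrapositive (from $1\le \mathrm{ord}(e,[a,z])\le \mathrm{ord}(e,[F])=1$ one gets $e\in\{a,z\}\subset F$), and the reverse inclusion under $F\subset E(X)$ is obtained from $F\subset [F]\cap E(X)\subset E([F])$, which is the same order-monotonicity observation you use.
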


\begin{proof}
$(1)$ For any $z\in F $, $[a,z]$ is connected then the subset
$G:=\cup_{z\in F  }[a,z]$ is  connected and  contains $F$. We will
prove that $G$ is closed. Let $(w_n)_{n\geq 1}$ be a sequence in $G$
converging to $w \in X$. For any $n\geq 1$, there is $z_n \in F$
such that $w_n \in [a,z_n]$. Since $F$ is compact, by considering a
sub sequence of $(z_n)_{n\geq 1}$, we may assume that $(z_n)_{n\geq
1}$ converges to $z\in F$. It suffices to prove that $w\in [a,z]$.
 Suppose contrarily that $\delta:=d(w,[a,z])>0$ then $B(w,\frac{\delta}{2}) \cap B([a,z],
 \frac{\delta}{2})=\emptyset$. By Lemma \ref{dist}, since $(z_n)_{n\geq 1}$ converges to $z$ then
 there is $N>0$ such that diam$([z,z_n])<\frac{\delta}{2}$ when $n>N$, hence $[z,z_n]\subset
 B(z,\frac{\delta}{2})$ so $[a,z_n]\subset [a,z]\cup [z,z_n]\subset B([a,z],\frac{\delta}{2})$.
 Let $n>N$ such that $d(w,w_n)<\frac{\delta}{2}$. Then $w_n \in B([a,z],\frac{\delta}{2})\cap B(w,\frac{\delta}{2})$ hence $B([a,z],\frac{\delta}{2})\cap B(w,\frac{\delta}{2})\neq \emptyset$, absurd.
 So $w\in G$ then we conclude that $G$ is a subdendrite of
 $X$ satisfying $F \subset G \subset [F]$, hence $[F]=G$.\\
 $(2)$ Let $e\in E([F])$, by $(1)$ there is $z\in F$ such that $e\in [a,z]$.
 Since $1\leq ord(
 e,[a,z])\leq ord(e,[F])=1$ hence $e=z$ or $a$ then $e\in F$, so $E([F])\subset F$. Now, suppose that $F \subset E(X)$. It suffices to prove that $F \subset E([F])$. Since $F \subset [F]$ then $F \subset [F] \cap E(X)\subset E([F])$. This finish the proof of this Lemma.

\end{proof}

\begin{lem} $($\cite{MS}, Lemma $2.3)$ \label{cxe} Let $(C_n)_{n\geq1}$ be a sequence of pairwise disjoint
connected subsets of a dendrite $(X,d)$. Then we have $ \lim_{n \to
+\infty} \emph{diam}(C_n)=0 .$
\end{lem}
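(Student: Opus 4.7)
The plan is to argue by contradiction. Suppose $\text{diam}(C_n)$ does not tend to $0$. Then there exist $\varepsilon > 0$ and a subsequence (which I still denote $(C_n)_{n\geq 1}$ for simplicity) with $\text{diam}(C_n) \geq \varepsilon$ for all $n$, and one can choose $a_n, b_n \in C_n$ with $d(a_n, b_n) \geq \varepsilon/2$. By compactness of $X$, passing to a further subsequence we may assume $a_n \to a$ and $b_n \to b$ in $X$; then $d(a,b) \geq \varepsilon/2 > 0$, so $a \neq b$.

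Next I would invoke Corollary \ref{J} at the pair $(a,b)$: this produces $\delta > 0$ and a nondegenerate arc $J \subset [a,b]$ (nondegenerate because in that corollary $J=[c_1,c_2]$ with $[x,c_1]\cap[c_2,y]=\emptyset$, forcing $c_1 \neq c_2$) such that $J \subset [u,v]$ whenever $u\in B(a,\delta)$ and $v\in B(b,\delta)$. For $n$ large enough, $a_n \in B(a,\delta)$ and $b_n \in B(b,\delta)$, so $J \subset [a_n,b_n]$. Since $C_n$ is a connected subset of the dendrite $X$, it is arcwise connected (\cite{Nad}, Proposition 10.9), and therefore the unique arc $[a_n,b_n]$ joining its two points $a_n$ and $b_n$ is contained in $C_n$. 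Consequently $J \subset C_n$ for all sufficiently large $n$.

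This yields infinitely many indices $n$ for which the same nondegenerate arc $J$ lies in $C_n$. But the sets $C_n$ are pairwise disjoint, so any point of $J$ would belong to infinitely many of them, which is impossible. The contradiction shows $\text{diam}(C_n) \to 0$. I expect no serious obstacle: the two points one must be slightly careful about are (i) verifying that the arc $J$ produced by Corollary \ref{J} is genuinely nondegenerate, and (ii) justifying the inclusion $[a_n,b_n] \subset C_n$, which relies on the arcwise-connectedness of connected subsets of a dendrite. Both were already established in the preliminary material.
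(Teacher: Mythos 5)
Your proof is correct. Note that the paper does not prove this lemma at all --- it is simply quoted from \cite{MS} (Lemma 2.3) --- so there is no in-paper argument to compare against; but your argument is essentially the same device the author uses to prove Lemma \ref{most} just above it: extract convergent sequences $a_n\to a$, $b_n\to b$ with $a\neq b$, apply Corollary \ref{J} to trap a fixed nondegenerate arc $J$ inside $[a_n,b_n]$ for all large $n$, and use arcwise connectedness of connected subsets of a dendrite to conclude $[a_n,b_n]\subset C_n$, contradicting pairwise disjointness. Your two flagged points of care are both handled correctly: $J=[c_1,c_2]$ is nondegenerate since $[x,c_1]\cap[c_2,y]=\emptyset$ (and in fact even a single common point of infinitely many pairwise disjoint sets would already be a contradiction), and the inclusion $[a_n,b_n]\subset C_n$ follows from uniqueness of arcs in a dendrite together with Proposition 10.9 of \cite{Nad}. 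The only cosmetic remark is that the final contradiction needs only that $J$ is nonempty, so the nondegeneracy discussion, while valid, is not essential.
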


\begin{lem} (\cite{ACCS}, Theorem 3.3)\label{fo}
 If a dendrite has a closed set of endpoints then any point of it has finite order.
\end{lem}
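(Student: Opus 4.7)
I would argue by contradiction: suppose some $x\in X$ has $ord(x,X)=\infty$, and enumerate the connected components of $X\setminus\{x\}$ as $(C_i)_{i\geq 1}$. Since $X$ is locally connected, each $C_i$ is open in $X$; they are pairwise disjoint, and because $X$ is connected each $C_i$ must have $x$ in its closure. In particular, for every $i$, $\overline{C_i}=C_i\cup\{x\}$ is a subcontinuum of $X$, hence a subdendrite, and $\overline{C_i}\setminus\{x\}=C_i$ is connected, so $x$ is an endpoint of $\overline{C_i}$.

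Applying Lemma~\ref{cxe} to the pairwise disjoint connected sets $(C_i)$, I would get $\text{diam}(C_i)\to 0$; combined with $x\in\overline{C_i}$, this gives $y_i\to x$ for any choice $y_i\in C_i$. The next step is to produce, in each $C_i$, an endpoint of the whole $X$. Because $\overline{C_i}$ is a non-degenerate dendrite, it contains at least two endpoints, so I may pick $e_i\in E(\overline{C_i})$ with $e_i\neq x$, hence $e_i\in C_i$. To promote $e_i$ to an element of $E(X)$, I would use that $C_i$ is open in $X$: take a small connected open neighborhood $V$ of $e_i$ in $X$ with $V\subset C_i\subset\overline{C_i}$. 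The order of $e_i$ in $X$ is read off from the number of connected components of $V\setminus\{e_i\}$ for sufficiently small connected $V$, and the same recipe in $\overline{C_i}$ gives the same answer because $V$ is also a neighborhood of $e_i$ in $\overline{C_i}$. Thus $ord(e_i,X)=ord(e_i,\overline{C_i})=1$, so $e_i\in E(X)$.

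The sequence $(e_i)$ then consists of distinct endpoints of $X$ (belonging to the disjoint $C_i$) with $e_i\to x$. Using the hypothesis that $E(X)$ is closed, we conclude $x\in E(X)$, i.e.\ $ord(x,X)=1$, which contradicts $ord(x,X)=\infty$. The main obstacle is the local-order comparison $ord(e_i,X)=ord(e_i,\overline{C_i})$; it relies on having a sufficiently small connected neighborhood of $e_i$ trapped inside $C_i$ together with the standard characterization of order in a dendrite via the number of components of the complement of the point in such neighborhoods.
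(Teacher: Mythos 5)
Your argument is correct, but note that the paper does not prove this lemma at all: it is quoted verbatim from \cite{ACCS} (Theorem 3.3), so there is no internal proof to compare against. Your blind proof is a legitimate self-contained derivation, and it even reuses the paper's own Lemma \ref{cxe} (diameters of pairwise disjoint connected sets tend to $0$) as the key quantitative input, which fits the paper's toolkit well. The one step you flag as the main obstacle --- the comparison $ord(e_i,X)=ord(e_i,\overline{C_i})$ via small connected neighborhoods --- does invoke the local (Menger--Urysohn) characterization of order, which is true for dendrites but is itself a nontrivial fact; you only need the easy direction, and it can be obtained more directly. Indeed, write $X\setminus\{e_i\}=(X\setminus C_i)\cup(\overline{C_i}\setminus\{e_i\})$: the set $X\setminus C_i=\{x\}\cup\bigcup_{j\neq i}C_j$ is connected because each $C_j\cup\{x\}$ is connected and they all contain $x$; the set $\overline{C_i}\setminus\{e_i\}$ is connected precisely because $e_i$ is an endpoint of the dendrite $\overline{C_i}$; and both pieces contain $x$. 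Hence $X\setminus\{e_i\}$ is connected and $e_i\in E(X)$ with no appeal to local order. With that substitution (and the routine remark that the components of $X\setminus\{x\}$ are open and therefore at most countable, so the enumeration $(C_i)_{i\geq 1}$ and the application of Lemma \ref{cxe} are legitimate), your proof is complete and arguably more elementary than chasing down the cited result in \cite{ACCS}.
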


\begin{thm} $($\cite{ACCS}, Theorem $3.2)$ \label{CH1}
Every subcontinuum of a dendrite with a closed set of endpoints is a
dendrite with a closed set of endpoints.
\end{thm}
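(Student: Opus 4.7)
The plan is to get the ``dendrite'' part of the conclusion for free from \cite{Nad}, Theorem 10.10 (every subcontinuum of a dendrite is a dendrite), and devote the bulk of the argument to showing $E(Y)$ is closed. I argue by contradiction: let $(p_n)\subset E(Y)$ converge to a point $p\in Y$ with $ord(p,Y)\geq 2$. A preliminary inequality I use throughout is $ord(y,Y)\leq ord(y,X)$ for $y\in Y$; it comes from observing that for every component $C$ of $X\setminus\{y\}$, the set $Y\cap (C\cup\{y\})$ is a subdendrite of $C\cup\{y\}$ in which $y$ is an endpoint (since $y$ is an endpoint of $C\cup\{y\}$), so $Y\cap C$ is empty or connected, and thus the components of $Y\setminus\{y\}$ inject into those of $X\setminus\{y\}$. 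The first case is easy: if infinitely many $p_n$ lie in $E(X)$, closedness of $E(X)$ gives $p\in E(X)$, so $ord(p,Y)\leq ord(p,X)=1$, contradicting $ord(p,Y)\geq 2$.

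So I may assume $p_n\in E(Y)\setminus E(X)$ for every $n$, which forces $ord(p_n,X)\geq 2$. Because $p_n\in E(Y)$ the set $Y\setminus\{p_n\}$ is connected and hence lies inside a single component $C_n$ of $X\setminus\{p_n\}$; i.e.\ $Y\subset C_n\cup\{p_n\}$. For each $n$ I pick another component $D_n$ of $X\setminus\{p_n\}$; by local connectedness $D_n$ is open in $X$ with boundary $\{p_n\}$, and $\overline{D_n}=D_n\cup\{p_n\}$ is a nondegenerate subdendrite disjoint from $Y\setminus\{p_n\}$. The first key claim is that the $D_n$ are pairwise disjoint: if $z\in D_n\cap D_m$ with $n\neq m$, then $p_m\in Y\subset C_n\cup\{p_n\}$ and $p_m\neq p_n$ give $p_m\in C_n$, so $z$ and $p_m$ sit in different components of $X\setminus\{p_n\}$ and $p_n\in [z,p_m]$; symmetrically $p_m\in [z,p_n]$, and uniqueness of arcs in the dendrite collapses these relations to $p_n=p_m$, a contradiction. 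Lemma \ref{cxe} applied to the pairwise disjoint connected sets $(D_n)$ then gives $\text{diam}(D_n)\to 0$, and hence $\text{diam}(\overline{D_n})\to 0$.

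The heart of the proof is locating a genuine endpoint of $X$ inside each $\overline{D_n}$. Since $\overline{D_n}$ is a nondegenerate subdendrite with $p_n$ as an endpoint, it admits another endpoint $e_n\in D_n$, and I claim $e_n\in E(X)$. Suppose instead $ord(e_n,X)\geq 2$. Each component of $X\setminus\{e_n\}$ has $e_n$ in its closure and therefore meets the open neighborhood $D_n$ of $e_n$, so $D_n\setminus\{e_n\}$ splits into at least two nonempty open pieces; let $A,B$ be two of its components. Both $A$ and $B$ have $e_n$ as a limit point (otherwise one of them would be clopen in the connected set $D_n$). On the other hand, $\overline{D_n}\setminus\{e_n\}=(D_n\setminus\{e_n\})\cup\{p_n\}$ is connected because $e_n$ is an endpoint of $\overline{D_n}$, which forces $p_n$ to be a limit of both $A$ and $B$ as well (otherwise $A$ would be clopen in $\overline{D_n}\setminus\{e_n\}$). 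Hence $\overline{A}\cap\overline{B}$ contains the two distinct points $e_n$ and $p_n$, contradicting Lemma \ref{most}.

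Putting everything together, $e_n\in D_n$ with $\text{diam}(\overline{D_n})\to 0$ and $p_n\to p$ yield $e_n\to p$; closedness of $E(X)$ then gives $p\in E(X)$, whence $ord(p,Y)\leq ord(p,X)=1$, again contradicting $ord(p,Y)\geq 2$ and completing the argument. The step I expect to be the main obstacle is the third paragraph: extracting an actual endpoint of $X$ from the abstract subdendrite $\overline{D_n}$ is where the openness of $D_n$ in $X$ and the precise at-most-one-point content of Lemma \ref{most} really have to work in concert, and this is also the only place where the closedness hypothesis on $E(X)$ does substantive work beyond the easy Case~1.
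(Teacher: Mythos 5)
The paper does not prove this statement at all --- it is quoted verbatim from \cite{ACCS} (Theorem 3.2) and used as a black box --- so there is no in-paper argument to compare against. Your proof is, as far as I can check, correct and self-contained modulo facts the paper already has available: Nadler's Theorem 10.10 for the ``subcontinuum of a dendrite is a dendrite'' half and for connectedness of intersections, Lemma \ref{cxe} for $\mathrm{diam}(D_n)\to 0$, and Lemma \ref{most} for the at-most-one-point intersection of closures of disjoint connected sets. The three pillars all hold up: the monotonicity $ord(y,Y)\leq ord(y,X)$ via injecting components of $Y\setminus\{y\}$ into those of $X\setminus\{y\}$; the pairwise disjointness of the $D_n$ via the two separation relations $p_n\in[z,p_m]$ and $p_m\in[z,p_n]$ forcing $[z,p_n]=[z,p_m]$; and the extraction of $e_n\in E(X)\cap D_n$, where your clopen-set arguments correctly pin both $e_n$ and $p_n$ into $\overline{A}\cap\overline{B}$ (using $\partial(D_n\setminus\{e_n\})\subset\{p_n,e_n\}$) so that Lemma \ref{most} bites. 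Two small points you should make explicit: (i) you need the $p_n$ pairwise distinct for the disjointness claim ($p_m\neq p_n$ is used), which is harmless since an eventually constant sequence would force $p\in E(Y)$, so one passes to a subsequence of distinct terms; (ii) in Case 2 you tacitly pass to the subsequence where $p_n\notin E(X)$ for all $n$. Neither is a gap. As an aside, the third paragraph can be shortened by noting that order is a local invariant and $D_n$ is an open neighborhood of $e_n$ contained in $\overline{D_n}$, so $ord(e_n,X)=ord(e_n,\overline{D_n})=1$ directly; but your route through Lemma \ref{most} has the virtue of using only tools stated in this paper.
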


\begin{prop}$($\cite{ACCS}, Corollary $3.5)$ \label{CH2}
If $X$ is a dendrite with $E(X)$ closed set then
$\overline{B(X)}\subset B(X)\cup E(X) .$ In particular, $B(X) \cup
E(X)$ is  closed.
\end{prop}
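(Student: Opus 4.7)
The plan is to argue by contradiction: suppose $x \in \overline{B(X)}$ yet $x \notin B(X) \cup E(X)$. Since $x$ is neither an endpoint nor a branch point, $\mathrm{ord}(x,X) = 2$, so $X \setminus \{x\}$ has exactly two components $C_1, C_2$. Choose $b_n \in B(X)$ with $b_n \to x$ and $b_n \neq x$; after extracting, assume $b_n \in C_1$ for every $n$. For each $n$, since $\mathrm{ord}(b_n,X) \geq 3$ and at most one component of $X \setminus \{b_n\}$ can contain $x$, I pick two distinct components $W_n^1, W_n^2$ of $X \setminus \{b_n\}$ that avoid $x$. Both sit inside $C_1$, because $C_2 \cup \{x\}$ is connected and so lies in the single component of $X \setminus \{b_n\}$ that meets $x$.

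Next I would produce, for each $n$ and $i \in \{1,2\}$, an endpoint $e_n^i \in E(X) \cap W_n^i$. The closure $\overline{W_n^i} = W_n^i \cup \{b_n\}$ is a non-degenerate sub-dendrite of $X$ (by Theorem 10.10 of \cite{Nad}), in which $b_n$ is itself an endpoint since $\overline{W_n^i} \setminus \{b_n\} = W_n^i$ is connected. Any \emph{other} endpoint $e$ of $\overline{W_n^i}$ lies in the open set $W_n^i$, so its order in $X$ coincides with its order in $\overline{W_n^i}$, which forces $e \in E(X)$. As a non-degenerate dendrite has at least two endpoints, $E(X) \cap W_n^i \neq \emptyset$. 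By compactness of $X$ extract a subsequence with $e_n^i \to e^i$; by closedness of $E(X)$ the limit $e^i$ lies in $E(X)$, and because $e_n^i \in C_1$ with $x \notin E(X)$ I conclude $e^i \in C_1$.

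The crucial geometric observation is that $b_n \in [e_n^1, e_n^2]$. Indeed $[b_n, e_n^1] \cap [b_n, e_n^2] = \{b_n\}$, so $[b_n, e_n^1] \cup [b_n, e_n^2]$ is an arc from $e_n^1$ to $e_n^2$, which by the uniqueness of arcs in a dendrite must equal $[e_n^1, e_n^2]$. Using the dendrite inclusion $[e_n^1, e_n^2] \subset [e_n^1, e^1] \cup [e^1, e^2] \cup [e^2, e_n^2]$ together with Lemma \ref{dist} (so that $\mathrm{diam}([e_n^i, e^i]) \to 0$), every cluster point of the sequence $(b_n)$ lies in $[e^1, e^2]$; hence $x \in [e^1, e^2]$. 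But $e^1, e^2 \in C_1$ lie in the same component of $X \setminus \{x\}$, so $x$ cannot separate them, i.e.\ $x \notin [e^1, e^2]$. This contradiction yields $\overline{B(X)} \subset B(X) \cup E(X)$, and combined with the closedness of $E(X)$ gives at once that $B(X) \cup E(X)$ is closed.

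The main obstacle is the extraction of the endpoints $e_n^i \in E(X) \cap W_n^i$: one must exploit that the only boundary point of $W_n^i$ inside $X$ is $b_n \notin E(X)$, so every endpoint of the sub-dendrite $\overline{W_n^i}$ different from $b_n$ retains order $1$ in $X$ and must thus be an endpoint of $X$. The closedness of $E(X)$ enters both here, to secure $e^i \in E(X)$ after passing to the limit, and at the very end, to upgrade the inclusion $\overline{B(X)} \subset B(X) \cup E(X)$ into the closedness of $B(X) \cup E(X)$.
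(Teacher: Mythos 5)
Your proof is correct. Note that the paper itself offers no argument for this proposition --- it is quoted from \cite{ACCS} (Corollary 3.5) --- so there is no internal proof to compare against; what you have written is a self-contained verification using only tools already present in the paper (Lemma \ref{dist}, uniqueness of arcs, arcwise connectedness of connected subsets). The two pillars of your argument are sound: (a) each component $W$ of $X\setminus\{b_n\}$ avoiding $x$ contains a point of $E(X)$, because $\overline{W}=W\cup\{b_n\}$ is a nondegenerate subdendrite with at least two endpoints, of which at most one is $b_n$, and the order of a point of the open set $W$ is the same computed in $\overline{W}$ or in $X$ (order is determined on any connected open neighbourhood in a dendrite --- worth one explicit sentence, but standard); and (b) $b_n\in[e_n^1,e_n^2]\subset[e_n^1,e^1]\cup[e^1,e^2]\cup[e^2,e_n^2]$, which together with Lemma \ref{dist} forces $x\in[e^1,e^2]\subset C_1$, contradicting $x\notin C_1$. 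The closedness of $E(X)$ is used exactly where it must be, to guarantee $e^i\neq x$; without it the limit endpoints could collapse onto $x$ (as in Example 2 of the paper, where $B(X)$ is dense in a free arc). The degenerate case $e^1=e^2$ is harmless, since then $x\in[e^1,e^2]=\{e^1\}\subset C_1$ gives the same contradiction.
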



\begin{prop} $($\cite{ACCS}, Corollary $3.6)$ \label{CH4}
If $X$ is a dendrite with $E(X)$ closed  then $B(X)$ is a discrete
set.
\end{prop}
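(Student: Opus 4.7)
The plan is to argue by contradiction. Suppose some $b_0 \in B(X)$ is not isolated in $B(X)$; there is then a sequence $(b_n)_{n\ge 1}$ of pairwise distinct branch points with $b_n\to b_0$. By Lemma~\ref{fo}, $b_0$ has finite order; let $C_1,\dots,C_k$ be the components of $X\setminus\{b_0\}$, and pass to a subsequence so all $b_n\in C_1$. I would then introduce the tree-order $b_n\prec b_m \iff b_n\in(b_0,b_m)$ and first rule out infinite $\prec$-increasing chains: if $b_{n_1}\prec b_{n_2}\prec\cdots$, Lemma~\ref{dist} gives $d(b_0,b_{n_1})\le\mathrm{diam}[b_0,b_{n_j}]\to 0$, contradicting $b_{n_1}\ne b_0$. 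Applying Ramsey's theorem to the three-coloring of index-pairs by $\prec$, $\succ$, or incomparable, and discarding the ruled-out case, I may pass to a subsequence that is either a $\prec$-antichain or a $\prec$-decreasing chain; in the latter case all $b_n$ lie on the fixed arc $[b_0,b_{n_1}]$.

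In each case, for every $n$ I choose a component $U_n$ of $X\setminus\{b_n\}$ avoiding $b_0$---and additionally avoiding $b_{n_1}$ in the decreasing-chain case; such a $U_n$ exists since $\mathrm{ord}(b_n)\ge 3$. In the antichain case, incomparability is equivalent to $b_m$ lying in the $b_0$-containing component $R_n$ of $X\setminus\{b_n\}$ for every $m\ne n$; hence the connected set $U_m\cup\{b_m\}$ meets $R_n$ and misses $b_n$, so $U_m\cup\{b_m\}\subset R_n$, and in particular $U_n\cap U_m=\emptyset$. The decreasing-chain case is analogous: each $b_m$ ($m\ne n$) lies in one of the two arc-direction components of $X\setminus\{b_n\}$, neither equal to $U_n$, and the same connectedness argument yields $U_n\cap U_m=\emptyset$. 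Either way the $U_n$'s are pairwise disjoint, so Lemma~\ref{cxe} gives $\mathrm{diam}(U_n)\to 0$.

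Finally, I extract endpoints of $X$ accumulating at $b_0$. Each $\overline{U_n}=U_n\cup\{b_n\}$ is a subdendrite with $b_n$ as one endpoint (since $\overline{U_n}\setminus\{b_n\}=U_n$ is connected), so it has at least one other endpoint $e_n\in U_n$. Because $U_n$ is an open neighborhood of $e_n$ in $X$ contained in the subdendrite $\overline{U_n}$, the local orders agree: $\mathrm{ord}(e_n,X)=\mathrm{ord}(e_n,\overline{U_n})=1$, hence $e_n\in E(X)$. Since $d(e_n,b_n)\le\mathrm{diam}(U_n)\to 0$ and $b_n\to b_0$, we have $e_n\to b_0$, and closedness of $E(X)$ forces $b_0\in E(X)$, contradicting $b_0\in B(X)$.

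The main obstacle I anticipate is the combinatorial step: extracting the antichain or decreasing-chain subsequence via Ramsey, then verifying pairwise disjointness of the $U_n$'s by careful arc-structure bookkeeping (determining which component of $X\setminus\{b_n\}$ each $b_m$ lies in). Once secured, the remainder is a clean synthesis of Lemma~\ref{dist} (arc-diameter control), Lemma~\ref{cxe} (disjoint sets forcing diameters to zero), the subdendrite order identification, and the closedness hypothesis on $E(X)$.
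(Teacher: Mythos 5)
The paper offers no proof of this proposition to compare against: it is quoted directly from \cite{ACCS} (Corollary 3.6). Judged on its own, your argument is correct and self-contained, using only tools the paper already imports: Lemma \ref{fo} to get finite order at $b_0$, Lemma \ref{dist} to rule out an infinite $\prec$-increasing chain (since $b_{n_1}\in[b_0,b_{n_j}]$ and $\mathrm{diam}[b_0,b_{n_j}]\to 0$ forces $b_{n_1}=b_0$), Ramsey to reduce to an antichain or a decreasing chain, and Lemma \ref{cxe} to force $\mathrm{diam}(U_n)\to 0$. The disjointness bookkeeping checks out: $b_m\in R_n$ iff $b_n\notin(b_0,b_m)$, and the connected set $\overline{U_m}=U_m\cup\{b_m\}$ misses $b_n$, so it lies in a single component of $X\setminus\{b_n\}$ different from $U_n$. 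The one step I would insist you write out in full is the order identification $\mathrm{ord}(e_n,X)=\mathrm{ord}(e_n,\overline{U_n})$: in general the order of a point drops when passing to a subdendrite, so you need the hypothesis that $U_n$ is open in $X$ — e.g., note that every component of $X\setminus\{e_n\}$ accumulates at $e_n$ and hence meets $U_n\setminus\{e_n\}$, while distinct components of $X\setminus\{e_n\}$ cannot meet the same component of $\overline{U_n}\setminus\{e_n\}$; alternatively, observe directly that $X\setminus\{e_n\}=(X\setminus U_n)\cup(\overline{U_n}\setminus\{e_n\})$ is a union of two connected sets sharing $b_n$, hence connected. Two cosmetic points: the initial reduction placing all $b_n$ in one component of $X\setminus\{b_0\}$ is never used, and the claim that incomparability is ``equivalent'' to $b_m\in R_n$ is slightly overstated (only $b_n\not\prec b_m$ is equivalent to $b_m\in R_n$), though the antichain hypothesis gives you both directions you actually use.
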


\begin{lem} \label{Y<X}  $($\cite{CCP}, Proposition $4.14)$
If  $Y$ and $X$ dendrites with $Y \subset X$, then
$E(Y)^{\prime}\subset E(X)^{\prime}$.
\end{lem}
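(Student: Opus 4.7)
The plan is to take any $e \in E(Y)^{\prime}$ and construct an infinite sequence of distinct points of $E(X)\setminus\{e\}$ converging to $e$; this will directly give $e \in E(X)^{\prime}$. Fix a sequence $(e_n)_{n\geq 1}$ of distinct points of $E(Y)$ with $e_n \to e$. If infinitely many $e_n$ already lie in $E(X)$, we are done, so after extracting a subsequence I may assume no $e_n$ is an endpoint of $X$. Then $X\setminus\{e_n\}$ has at least two connected components, and since $e_n\in E(Y)$ the set $Y\setminus\{e_n\}$ is connected and so lies in a single one of them. I choose any other component $U_n$ of $X\setminus\{e_n\}$ and set $V_n := U_n\cup\{e_n\}$. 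Local connectedness of $X$ makes $V_n=\overline{U_n}$ a subdendrite of $X$ having $e_n$ as an endpoint, and by construction $V_n\cap Y=\{e_n\}$.

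A nondegenerate subdendrite has at least two endpoints, so I pick $f_n\in E(V_n)\setminus\{e_n\}$. The next step is to verify that $f_n\in E(X)$: write $X=V_n\cup(X\setminus U_n)$ with the two pieces meeting only at $e_n$. Here $X\setminus U_n$ is the union of $\{e_n\}$ with the closures of the remaining components of $X\setminus\{e_n\}$, hence closed and connected; and $V_n\setminus\{f_n\}$ is connected because $f_n\in E(V_n)$. Since both connected pieces still meet at $e_n$, $X\setminus\{f_n\}=(V_n\setminus\{f_n\})\cup(X\setminus U_n)$ is connected, so $f_n$ is an endpoint of $X$.

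The decisive step, which I expect to be the main obstacle, is to show the $V_n$ are pairwise disjoint; once this is established, Lemma \ref{cxe} yields $\operatorname{diam}(V_n)\to 0$, and then $e_n,f_n\in V_n$ together with $e_n\to e$ forces $f_n\to e$, while disjointness makes the $f_n$ automatically distinct, completing the proof. To prove disjointness, suppose $n\neq m$ and $z\in V_n\cap V_m$. Since $V_n\cap Y=\{e_n\}$ and $e_n\neq e_m$, necessarily $z\in U_n\cap U_m\subset X\setminus Y$. Let $w$ be the median of $\{z,e_n,e_m\}$ in $X$; using that $[z,e_n]\setminus\{e_n\}$ is a connected subset of $X\setminus\{e_n\}$ meeting $U_n$, hence contained in $U_n$, and similarly for $[z,e_m]\setminus\{e_m\}\subset U_m$, one checks that $w\in U_n\cap U_m$, in particular $w\notin Y$. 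But $w\in[e_n,e_m]$, and since $Y$ is a subdendrite of $X$ containing $e_n$ and $e_m$, the uniqueness of arcs in dendrites forces $[e_n,e_m]\subset Y$, a contradiction.
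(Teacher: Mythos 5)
Your proof is correct. Note that the paper itself offers no argument for this lemma: it is quoted verbatim from \cite{CCP}, Proposition 4.14, so there is no in-paper proof to compare against. Your argument is a valid, self-contained replacement. The structure is sound: after reducing to the case where no $e_n$ is an endpoint of $X$, the choice of a component $U_n$ of $X\setminus\{e_n\}$ other than the one containing the connected set $Y\setminus\{e_n\}$ gives $V_n\cap Y=\{e_n\}$; the verification that $f_n\in E(X)$ via connectedness of $(V_n\setminus\{f_n\})\cup(X\setminus U_n)$ is right; and the median argument for disjointness works, since $w=e_n$ would force $e_n\in[z,e_m)\subset U_m$, contradicting $U_m\cap Y=\emptyset$, while $[e_n,e_m]\subset Y$ by uniqueness of arcs. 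The appeal to Lemma \ref{cxe} then finishes it, exactly in the spirit of how that lemma is used elsewhere in the paper (e.g.\ in Lemma \ref{YD}). One small simplification you might note: once $V_n\cap V_m\neq\emptyset$, the set $V_n\cup V_m$ is a connected, hence arcwise connected, subset of $X$ containing $e_n$ and $e_m$, so $[e_n,e_m]\subset(V_n\cup V_m)\cap Y=\{e_n,e_m\}$, which is already absurd for a nondegenerate arc; this avoids introducing the median point altogether.
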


\begin{lem} \label{card} $($\cite{Ni}, Lemma $4)$
If $Y$ and $X$ are dendrites with $Y \subset X$, then $card(E(Y))
\leq card(E(X))$.
\end{lem}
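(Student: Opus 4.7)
The plan is to build an explicit injection $\phi:E(Y)\to E(X)$; once this is done, the inequality $\operatorname{card}(E(Y))\leq \operatorname{card}(E(X))$ follows immediately. Given $e\in E(Y)$, I distinguish two cases. If $e\in E(X)$, set $\phi(e)=e$. Otherwise $\operatorname{ord}(e,X)\geq 2$, so $X\setminus\{e\}$ splits into at least two connected components, each open because $X$ is locally connected. Since $e$ is an endpoint of $Y$, the set $Y\setminus\{e\}$ is connected and lies entirely in exactly one of these components; select any other component $C_e$, which is then disjoint from $Y$. Its closure $\overline{C_e}=C_e\cup\{e\}$ is a non-degenerate subdendrite of $X$ having $e$ as one of its endpoints, so it admits at least one other endpoint; pick any such point and call it $\phi(e)$.

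I would next verify that $\phi(e)\in E(X)$. The point $\phi(e)$ lies in the open set $C_e$, and writing $X\setminus\{\phi(e)\}=\bigl(\overline{C_e}\setminus\{\phi(e)\}\bigr)\cup \bigl(X\setminus C_e\bigr)$, the first piece is connected because $\phi(e)$ is an endpoint of $\overline{C_e}$, while the second piece is connected as the union of the closures of the remaining components of $X\setminus\{e\}$, all containing $e$. The two pieces share the point $e$, so their union is connected, i.e., $\phi(e)$ has order one in $X$.

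Injectivity is the heart of the argument. For distinct $e,e'\in E(Y)$, the only non-trivial situation is when both $\phi(e)\in C_e$ and $\phi(e')\in C_{e'}$; the cases where one of $e,e'$ already lies in $E(X)$ are immediate since $C_{e'}\cap Y=\emptyset$. It therefore suffices to show $C_e\cap C_{e'}=\emptyset$. Suppose for contradiction that some $x\in C_e\cap C_{e'}$ exists; the three arcs $[e,e']\subset Y$, $[e,x]\subset\overline{C_e}$, and $[e',x]\subset\overline{C_{e'}}$ form a tripod in the subdendrite $[\{e,e',x\}]$, and therefore share a unique Steiner point $c$ lying on all three arcs. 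Since $\overline{C_e}\cap Y=\{e\}$, the point $c\in[e,e']\cap[e,x]$ must equal $e$; by symmetry $c=e'$, contradicting $e\neq e'$. I expect this Steiner-point disjointness step to be the main obstacle; the remainder is a routine assembly of basic dendrite facts, namely local connectedness, the existence of at least two endpoints in any non-degenerate subdendrite, and the fact that $\overline{C_e}$ attaches to the rest of $X$ only at the single point $e$.
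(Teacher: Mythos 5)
Your argument is correct. Note, however, that the paper offers no proof of this statement at all: it is quoted verbatim from Nikiel (\cite{Ni}, Lemma 4), so there is nothing internal to compare against. What you supply is a self-contained elementary proof via an explicit injection $\phi:E(Y)\to E(X)$, and every step checks out: the closure of a component $C_e$ of $X\setminus\{e\}$ is indeed $C_e\cup\{e\}$, a non-degenerate subdendrite has a second endpoint, your decomposition $X\setminus\{\phi(e)\}=\bigl(\overline{C_e}\setminus\{\phi(e)\}\bigr)\cup\bigl(X\setminus C_e\bigr)$ correctly shows $\phi(e)\in E(X)$, and the Steiner-point argument gives $C_e\cap C_{e'}=\emptyset$. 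The only remark worth making is that the step you flag as the main obstacle can be shortened: if $x\in C_e\cap C_{e'}$, then $\overline{C_e}=C_e\cup\{e\}$ is a connected subset of $X\setminus\{e'\}$ (since $e'\in Y$ and $C_e\cap Y=\emptyset$) meeting $C_{e'}$, hence $\overline{C_e}\subset C_{e'}$, which forces $e\in C_{e'}$ and contradicts $C_{e'}\cap Y=\emptyset$. This avoids invoking the tripod/Steiner point and uses only the maximality of components, which is slightly more in the spirit of the elementary dendrite facts (Lemma \ref{most}, Corollary \ref{J}) the paper itself develops.
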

For a subset $A$ of $X$, we denote by card$(A)$ the number of
elements of $A$. If $A=\emptyset$, we take card$(A)=0$.

\begin{defn}
Let $ f: X \to X $ be a dendrite map and $I, J$ two arcs in $X$. We
say that $I, J$ form an arc horseshoe for $f$ if  $f^n(I) \cap
f^m(J) \supset I \cup J$ for some $n, m \in \mathbb{N}$, where $I,
J$ have exactly a common one endpoint.
\end{defn}

\begin{thm}    \cite{BHS}
Let $X$ be a compact metric space and $f:~X\to X$ a continuous map.
If $h(f)>0$ then $f$ is Li-Yorke chaotic.
\end{thm}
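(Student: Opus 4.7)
The plan is to use ergodic theory to locate an abundance of Li-Yorke pairs inside a positive-entropy subsystem, and then apply a Kuratowski--Mycielski category argument to extract an uncountable scrambled set. First, by the Variational Principle there exists an $f$-invariant Borel probability measure $\mu$ with $h_{\mu}(f)>0$; passing to an ergodic component we may assume $\mu$ is ergodic. The support $K:=\mathrm{supp}(\mu)$ is a closed $f$-invariant set, and since an ergodic measure of positive entropy has no atoms, $K$ is a perfect Polish space, which is the right arena for the category argument to come.

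Next I would show that the relation $\mathrm{LY}(f)\cap(K\times K)$ is residual in $K\times K$. The proximal relation $\mathrm{Prox}(f)=\{(x,y):\liminf_{n}d(f^{n}x,f^{n}y)=0\}$ is automatically a $G_{\delta}$ set, so the real task is to establish its density in $K\times K$. This is the dynamical heart of the theorem and is carried out via the theory of entropy pairs (Blanchard) or, equivalently, via the Pinsker factor: positive entropy forces the existence of non-diagonal proximal pairs, and their orbits, together with ergodicity of $\mu$, can be spread throughout $K\times K$. Dually, the asymptotic relation $\mathrm{Asym}(f)$ is $F_{\sigma}$, and a straightforward Baire argument based on the non-atomicity of $\mu$ shows it is meagre in $K\times K$, since its presentation as a countable union of closed tube-neighbourhoods around the diagonal yields sets with empty interior in a perfect space.

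The final step is Mycielski's theorem: on a perfect Polish space, any symmetric residual relation contains a dense Cantor set all of whose distinct pairs lie in the relation. Applying this to $\mathrm{LY}(f)\cap(K\times K)$ produces an uncountable (in fact Cantor) scrambled set for $f$, which is the definition of Li-Yorke chaos.

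The main obstacle is the density of proximal pairs in $K\times K$; all the genuinely new content of the theorem is concentrated there. It requires the non-trivial ergodic-theoretic fact that the relative product of $\mu$ with itself over the Pinsker $\sigma$-algebra is supported on the proximal relation, or equivalently the existence and abundance of entropy pairs in any positive-entropy system. By contrast, the meagreness of the asymptotic set and the extraction via Mycielski's theorem are comparatively routine once the residuality of $\mathrm{LY}(f)\cap(K\times K)$ has been secured.
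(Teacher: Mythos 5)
This theorem is not proved in the paper at all: it is imported verbatim from the literature (the citation \cite{BHS}; the result goes back to Blanchard--Glasner--Kolyada--Maass \cite{BGKM}), so there is no internal argument to compare yours against. Judged on its own, your sketch correctly reproduces the architecture of the known proof: pass via the variational principle to an ergodic measure $\mu$ of positive entropy, note its support is perfect, and aim to apply Mycielski's theorem to the (symmetric, $G_{\delta}$) Li--Yorke relation. But the step you yourself identify as ``the dynamical heart'' --- abundance of proximal, non-asymptotic pairs --- is exactly the content of the theorem, and you leave it entirely as a black box (``via the theory of entropy pairs or the Pinsker factor''). As written, the proposal is a roadmap to the published proof rather than a proof.

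Two of the surrounding claims are also inaccurate as stated. First, the meagreness of the asymptotic relation does \emph{not} follow from non-atomicity of $\mu$ alone: one needs a dynamical input, namely that the relatively independent self-joining $\lambda=\mu\times_{P_{\mu}}\mu$ over the Pinsker factor charges the complement of a neighbourhood of the diagonal, whence by the ergodic theorem a $\lambda$-generic pair satisfies $\limsup_{n} d(f^{n}x,f^{n}y)>0$; a purely topological ``tube-neighbourhood'' argument in a perfect space does not suffice. Second, residuality of the Li--Yorke relation should be established in $\mathrm{supp}(\lambda)$ (or in a suitable closed subset of it), not in $K\times K$ for $K=\mathrm{supp}(\mu)$: the marginals of $\lambda$ are both $\mu$, but $\mathrm{supp}(\lambda)$ may be a proper subset of $K\times K$, and density of proximal pairs in all of $K\times K$ is neither what the literature proves nor what is needed to run Mycielski. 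Since the paper merely quotes this result, none of this affects the paper; but your proposal, taken as a standalone proof, has a genuine gap at its central step.
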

When $X$ is a compact interval, it is well known that if $f^n$ has
an arc hoseshoe for some $n\in \mathbb{N}$, then $h(f)>0$. Actually,
for dendrite map, we have
\begin{thm} $($\cite{KKM}, Theorem $2)$
Let $f:~X \to X$ be a dendrite map. If $f$ has an arc horseshoe then
$h(f)>0$.
\end{thm}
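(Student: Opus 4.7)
The plan is to reduce to a single-iterate horseshoe, realize every binary itinerary, and extract a topological $2$-shift factor, which yields $h(f)>0$.

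First I would unify the two iterates in the definition of the horseshoe. From $f^n(I)\supset I\cup J$, iterating gives $f^{kn}(I)\supset I\cup J$ for every $k\geq 1$; similarly $f^{km}(J)\supset I\cup J$ for every $k\geq 1$. Setting $N=nm$ and $g:=f^N$, both $g(I)\supset I\cup J$ and $g(J)\supset I\cup J$. Since $h(f)=h(g)/N$, it suffices to show $h(g)>0$.

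The core step is the itinerary-realization lemma: for every sequence $s=(s_0,s_1,\dots)\in\{I,J\}^{\mathbb{N}}$ there exists $x\in s_0$ with $g^k(x)\in s_k$ for all $k\geq 0$. Define $A_k:=s_0\cap g^{-1}(s_1)\cap\dots\cap g^{-k}(s_k)$. Each $A_k$ is closed as a finite intersection of closed sets. Nonemptiness is a straightforward backward construction from the covering relations $g(s_{j-1})\supset s_j$: starting from an arbitrary $t\in s_k$, choose $y_{k-1}\in s_{k-1}$ with $g(y_{k-1})=t$, then $y_{k-2}\in s_{k-2}$ with $g(y_{k-2})=y_{k-1}$, and so on down to $y_0\in s_0$; this $y_0$ lies in $A_k$. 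Compactness of $s_0$ then gives a point in $\bigcap_k A_k$ realizing the prescribed itinerary, and in fact shows $g^k(A_k)=s_k$ for all $k$.

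Once every binary itinerary is realized, the itinerary map $\pi\colon \Lambda\to\{I,J\}^{\mathbb{N}}$, defined on the invariant set $\Lambda=\bigcap_{k\geq 0}g^{-k}(I\cup J)$, intertwines $g|_\Lambda$ with the shift and is surjective, presenting the full one-sided $2$-shift as a topological factor of $g|_\Lambda$; monotonicity of entropy under factors yields $h(g)\geq\log 2$. The main technical obstacle is the unique common endpoint $p$ of $I$ and $J$: at this point $\pi$ is a priori ambiguous, and two distinct itineraries might collapse onto a single orbit that routes through $p$. I would resolve this by arguing directly with $(n,\varepsilon)$-separated sets: for $\varepsilon$ smaller than the separation between $I\setminus B(p,2\varepsilon)$ and $J\setminus B(p,2\varepsilon)$, a refinement of the backward-construction (choosing preimages in the interior of the covering arcs, possible because $g(I)$ and $g(J)$ strictly contain $I\cup J$ and $g$ is continuous) produces, for each binary word of length $n$, an orbit whose symbols at most coordinates lie well inside the corresponding arc. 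Distinct words then yield orbits $\varepsilon$-separated at some coordinate, giving exponentially many $(n,\varepsilon)$-separated points and hence $h(g)\geq\log 2>0$, from which $h(f)>0$ follows.
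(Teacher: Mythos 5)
First, note that the paper does not prove this statement at all: it is quoted as Theorem 2 of \cite{KKM} and used as a black box, so there is no internal proof to compare yours against. On its own terms, the first two-thirds of your argument are fine: the reduction to a single iterate $g=f^{nm}$ with $g(I)\supset I\cup J$ and $g(J)\supset I\cup J$, the identity $h(f)=h(g)/nm$, and the backward construction showing that every sequence in $\{I,J\}^{\mathbb{N}}$ is realized as an itinerary are all correct.

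The gap is in the final step, where itinerary realization is converted into an entropy bound, and it is exactly the point where the real work lives. The itinerary map is multivalued on orbits passing through the common endpoint $p$ of $I$ and $J$, and your proposed repair does not close this. The claim that $g(I)$ and $g(J)$ \emph{strictly} contain $I\cup J$ is not part of the hypothesis (the tent map with $I=[0,\tfrac12]$, $J=[\tfrac12,1]$ gives exact equality), and, more importantly, the backward construction gives no control on where the chosen preimages sit relative to $p$: it is entirely consistent with the hypotheses that \emph{every} $g$-preimage in $s_{k-1}$ of your chosen target lies in a prescribed neighborhood of $p$, because all of the ``covering'' may happen arbitrarily close to $p$. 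In that case, for two words differing only at early coordinates, the realizing orbits can both be within $\varepsilon$ of $p$ at precisely those coordinates, so they need not be $(n,\varepsilon)$-separated; your own phrase ``at most coordinates lie well inside the corresponding arc'' concedes the problem, since separation is required at a coordinate where the two words \emph{differ}, not merely at most coordinates. This is the classical turbulence-versus-strict-turbulence difficulty already present for interval maps, where the standard resolutions (see \cite{BC}) are either to show that some iterate admits a horseshoe with \emph{disjoint} arcs, or to produce $2^{n}$ subarcs with pairwise disjoint interiors following the itineraries and count those; for dendrites there is the additional obstacle that the interval lemma ``$g(A)\supset B$ implies some subarc of $A$ maps exactly onto $B$'' fails (the image of an innermost subarc can spill into other branches of the dendrite), which is why \cite{KKM} is a genuine theorem rather than a remark. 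As written, your argument establishes only that every itinerary is realized, which by itself does not bound the entropy from below.
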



\begin{lem} \label{WI} $($\cite{BC}, page 71$)$ Let $(X,d)$ be a compact metric space and $f:~X\to X$ a continuous map. Let $F$ be a proper closed subset of an $\omega$-limit set
$L=\omega_f (x)$, then
$$ \overline{f(L \backslash F)}  \cap F\neq \emptyset .$$
\end{lem}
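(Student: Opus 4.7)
The plan is to argue by contradiction: I assume $\overline{f(L\backslash F)}\cap F=\emptyset$ and derive that the forward orbit of $x$ is eventually bounded away from $F$, contradicting $F\subseteq L=\omega_f(x)$.

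Since $L=\omega_f(x)$ is invariant under $f$ (that is, $f(L)\subseteq L$), the standing hypothesis gives $f(L\backslash F)\subseteq L\backslash F$, because $f(L\backslash F)\subseteq L$ is disjoint from $F$. The disjoint nonempty closed sets $F$ and $\overline{f(L\backslash F)}$ lie in the compact space $X$, so $d_0:=d(F,\overline{f(L\backslash F)})>0$; put $\varepsilon:=d_0/3$. Uniform continuity of $f$ provides $\delta_0>0$ with $d(a,b)<\delta_0\Rightarrow d(f(a),f(b))<\varepsilon$. Fix $y\in L\backslash F$ (possible since $F$ is proper) and set $\eta:=d(y,F)>0$; then choose $\delta:=\min(\delta_0,\varepsilon,\eta/3)$. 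Because $L=\omega_f(x)$, there is $N\in\mathbb{N}$ with $d(f^n(x),L)<\delta$ for all $n\geq N$, and for each such $n$ I pick a nearest point $p_n\in L$, so that $d(f^n(x),p_n)<\delta$.

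The heart of the argument is a trapping induction on the sequence $(p_n)_{n\geq N}$. Since $y\in\omega_f(x)$, there is $n_0\geq N$ with $d(f^{n_0}(x),y)<\delta$, forcing $d(p_{n_0},y)<2\delta<\eta$ and hence $p_{n_0}\in L\backslash F$. Assuming $p_n\in L\backslash F$ for some $n\geq n_0$, one has $f(p_n)\in \overline{f(L\backslash F)}$ and uniform continuity gives $d(f^{n+1}(x),f(p_n))<\varepsilon$, whence $d(f^{n+1}(x),F)\geq d_0-\varepsilon=2\varepsilon$. Combined with $d(f^{n+1}(x),p_{n+1})<\delta\leq\varepsilon$, this yields $d(p_{n+1},F)\geq 2\varepsilon-\delta>0$ and therefore $p_{n+1}\in L\backslash F$; the induction continues. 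Consequently $d(f^n(x),F)\geq 2\varepsilon$ for every $n>n_0$, which contradicts the fact that every point of $F\subseteq\omega_f(x)$ is a subsequential limit of the orbit of $x$. The main subtlety is calibrating $\delta$ small enough that the shadowing of $L$ by the orbit and the one-step preservation of the side ``$L\backslash F$'' are simultaneously controlled; the choice $\varepsilon=d_0/3$ with $\delta<\varepsilon$ is exactly what makes the induction go through.
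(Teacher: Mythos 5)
Your argument is correct. Note first that the paper itself gives no proof of this lemma: it is quoted verbatim from Block--Coppel (\cite{BC}, p.~71), and the only thing proved in the paper is the equivalent reformulation (Lemma~\ref{WIbis}) as a one-line consequence. So there is no in-paper proof to compare against; what you have supplied is a self-contained proof of the cited result, and it is sound. The key steps all check out: $f(L)\subseteq L$ gives $f(L\setminus F)\subseteq L\setminus F$ under the contradiction hypothesis; compactness makes $d_0=d\bigl(F,\overline{f(L\setminus F)}\bigr)>0$ legitimate; the fact that the orbit of $x$ eventually lies in any $\delta$-neighbourhood of $\omega_f(x)$ (valid in a compact space) lets you define the shadow points $p_n$; and the trapping induction correctly shows that once $p_{n_0}\in L\setminus F$, all later $p_n$ stay in $L\setminus F$ and the orbit stays at distance at least $2\varepsilon$ from $F$, contradicting $\emptyset\neq F\subseteq\omega_f(x)$. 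The only point worth making explicit is that the statement tacitly requires $F\neq\emptyset$ (otherwise the conclusion is vacuously false); you use this both when asserting $d_0>0$ and in the final contradiction, and it is indeed satisfied everywhere the paper invokes the lemma (e.g.\ in Lemma~\ref{WIbis} and in Lemma~\ref{C}(6), where $F=L\setminus G$ is nonempty). This is essentially the classical Block--Coppel argument, so no divergence of method to report.
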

\medskip

  Lemma \ref{WI} is equivalent to the following Lemma:
\begin{lem} \label{WIbis}
If $G$ is a non empty open subset of $L$ (relatively to $L$) and
such that $f(\overline{G})\subset G$ then $G=L$.
\end{lem}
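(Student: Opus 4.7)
The plan is to prove both implications by contraposition, exploiting the duality $F = L\setminus G$ which converts the closed set $F$ of Lemma \ref{WI} into the open set $G$ of Lemma \ref{WIbis} and back. In both directions the $\omega$-limit set $L$ is the ambient space, which is compact as a closed subset of $X$, and $f$ restricts to a continuous self-map of $L$.

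First I would prove that Lemma \ref{WIbis} implies Lemma \ref{WI}. Given a proper closed subset $F$ of $L$ with $F\neq\emptyset$, set $G:=L\setminus F$, which is open in $L$ and non-empty since $F\neq L$. Assume, toward a contradiction, that $\overline{f(L\setminus F)}\cap F=\emptyset$, i.e.\ $\overline{f(G)}\subset G$. By continuity of $f$ we have
\[
f(\overline{G})\subset\overline{f(G)}\subset G,
\]
so the hypothesis of Lemma \ref{WIbis} holds for $G$, giving $G=L$. This contradicts $F\neq\emptyset$.

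Next I would prove the converse: Lemma \ref{WI} implies Lemma \ref{WIbis}. Let $G$ be a non-empty open subset of $L$ with $f(\overline{G})\subset G$, and suppose for contradiction $G\neq L$. Then $F:=L\setminus G$ is a proper non-empty closed subset of $L$, so Lemma \ref{WI} yields $\overline{f(L\setminus F)}\cap F\neq\emptyset$, that is, $\overline{f(G)}\cap F\neq\emptyset$. The only point that needs care here is to deduce $\overline{f(G)}\subset G$ from the a priori weaker hypothesis $f(\overline{G})\subset G$: if $y\in\overline{f(G)}$ then $y=\lim f(x_n)$ with $x_n\in G$; since $\overline{G}$ is compact (closed subset of compact $L$) a subsequence satisfies $x_n\to x\in\overline{G}$, and continuity gives $y=f(x)\in f(\overline{G})\subset G$. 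This contradicts $\overline{f(G)}\cap F\neq\emptyset$ and finishes the proof.

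The argument has essentially no obstacle; the only subtlety is the compactness-plus-continuity observation in the second direction that upgrades $f(\overline{G})\subset G$ to $\overline{f(G)}\subset G$ (equivalently, that the continuous image of the compact set $\overline{G}$ is closed). After that remark, both implications reduce to the passage between a closed set and its open complement in $L$.
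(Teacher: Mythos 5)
Your proof is correct and the substantive direction (Lemma \ref{WI} implies Lemma \ref{WIbis}) is essentially the paper's own argument: pass to the complement $F=L\setminus G$ and use compactness of $\overline{G}$ to get $\overline{f(G)}\subset f(\overline{G})\subset G$, contradicting Lemma \ref{WI}. The converse implication you include is extra (it justifies the word ``equivalent'' in the paper's text, which the paper does not bother to prove) but is equally routine and correct.
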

\begin{proof}
Suppose contrarily that there is a non empty open subset $G$ of $L$
such that $f(\overline{G})\subset G$ and $G\neq L$. Then
$F:=L\backslash G$ is a non empty closed subset of $L$ and
$\overline{f(L\backslash F)}\cap F= \overline{f(G)}\cap
F=f(\overline{G})\cap F \subset G \cap F=\emptyset$, absurd.
\end{proof}

\section{\textbf{On $\omega$-limit set containing a periodic point.}}
To prove  Theorem A, we need the following Lemmas

\begin{lem} (\cite{CWZ}, Proposition $4.4$)  \label{de}
Let $X$ be a dendrite with $E(X)$ closed   and $E(X)^{\prime} \neq
E(X)$. Then for any $e\in E(X) \backslash E(X)^{\prime}$, there is
$b\in X, b\neq e$ such that $[e,b]$ is a neighborhood of $e$ and
$[e,b] \cap B(X)=\emptyset$.
\end{lem}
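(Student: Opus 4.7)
I plan to produce a connected open neighborhood $U$ of $e$ whose closure is an arc $[e,c]$ free of branch points in its interior; then any $b \in (e,c)$ gives $[e,b] \cap B(X) = \emptyset$, while $[e,b)$ being open in $X$ makes $[e,b]$ a neighborhood of $e$. Two things must be established: (a) some $r>0$ with $B(e,r) \cap (E(X) \cup B(X)) = \{e\}$, and (b) from such a ball, a one-point-boundary neighborhood $U \subset B(e,r)$ of $e$ whose closure is forced to be an arc.

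For (a), the endpoint part is immediate since $E(X)$ is closed and $e$ is isolated in it, giving $r_0>0$ with $B(e,r_0) \cap E(X) = \{e\}$. The main obstacle is the branch-point part: I would argue by contradiction, supposing $b_n \in B(X)$ with $b_n \to e$, and working inside $\mathcal{B} := B(X) \cap B(e,r_0)$ ordered by $b \prec b'$ iff $b \in [e,b') \setminus \{e\}$. A standard dendrite calculation (uniqueness of arcs, $\mathrm{ord}(e,X)=1$) yields the dichotomy: for distinct $b,b' \in \mathcal{B}$ and components $C,C'$ of $X \setminus \{b\}$, $X \setminus \{b'\}$ missing $e$, either $C \cap C' = \emptyset$ or one contains the other. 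An iterated-predecessor process from each $b_n$ either terminates at a $\prec$-minimum of $\mathcal{B}$ or continues indefinitely. If it terminates for every $n$, denote by $m(n)$ the minimum reached; Lemma \ref{dist} gives $d(e,m(n)) \le \mathrm{diam}([e,b_n]) \to 0$, so a shared minimum $m$ would equal $e$ (impossible since $m \in B(X)$), while distinct minima close to $e$ admit a common branching point that is a strictly smaller element of $\mathcal{B}$, contradicting minimality. So for some $n$ the process is infinite, producing a descending chain $b^{(k)} \in \mathcal{B}$; its cluster points lie in $\overline{B(X)} \subset B(X) \cup E(X)$ (Proposition \ref{CH2}), cannot lie in $B(X)$ (discrete, by Proposition \ref{CH4}), hence lie in $E(X) \cap B(e,r_0) = \{e\}$, so $b^{(k)} \to e$. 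Since $\mathrm{ord}(b^{(k)},X) \ge 3$, for each $k$ choose a component $C_k$ of $X \setminus \{b^{(k)}\}$ containing neither $e$ nor $b^{(k-1)}$; the dichotomy makes the $C_k$ pairwise disjoint, so Lemma \ref{cxe} gives $\mathrm{diam}(C_k) \to 0$. Each $\overline{C_k}$ is a subdendrite whose interior points keep their order in $X$, so it contains an endpoint $e_k \in E(X) \cap C_k$. Then $d(e_k, e) \le \mathrm{diam}(\overline{C_k}) + d(b^{(k)}, e) \to 0$ with $e_k \ne e$, contradicting $e \notin E(X)^{\prime}$.

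For (b), pick $r \leq r_0$ with $B(e,r) \cap (E(X) \cup B(X)) = \{e\}$. Since $X$ is locally connected and $\mathrm{ord}(e,X)=1$, there is a connected open neighborhood $U$ of $e$ with $U \subset B(e,r)$ and $\partial U = \{c\}$. The subdendrite $\overline{U} = U \cup \{c\}$ has $E(\overline{U}) = \{e,c\}$: for $p \in U \setminus \{e\}$, $U$ is an open neighborhood of $p$ in $X$ contained in $\overline{U}$, so $\mathrm{ord}(p,\overline{U}) = \mathrm{ord}(p,X) = 2$, while $\overline{U} \setminus \{c\} = U$ is connected, giving $\mathrm{ord}(c,\overline{U}) = 1$. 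Therefore $\overline{U} = [e,c]$ and $U = [e,c)$; any $b \in (e,c)$ satisfies $[e,b] \subset U$, so $[e,b] \cap B(X) = \emptyset$, and $[e,b)$ being open in $X$ makes $[e,b]$ a neighborhood of $e$. Throughout, the main obstacle is the branch-point-accumulation argument in (a); the passage from (a) to (b) is essentially a computation in the order of points.
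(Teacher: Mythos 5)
The paper offers no proof of this lemma: it is imported verbatim from \cite{CWZ} (Proposition 4.4), so there is no internal argument to compare yours against. Checked on its own terms, your proof is essentially correct. Part (a) works: the disjoint-or-nested dichotomy for complementary components at distinct branch points is a genuine consequence of unique arcs and $ord(e,X)=1$; the alternative between an infinite $\prec$-descending chain and infinitely many distinct $\prec$-minima is exhaustive; the chain case correctly combines Proposition \ref{CH2}, Proposition \ref{CH4} and Lemma \ref{cxe} to manufacture endpoints $e_k\in E(X)\cap C_k$ converging to $e$ (each $\overline{C_k}$ is a nondegenerate subdendrite, so it has an endpoint other than $b^{(k)}$, and that endpoint has order $1$ in $X$ because $C_k$ is open); and the minima case correctly produces a branch point strictly between $e$ and a minimum. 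The one step you assert rather than prove is in (b): that $ord(e,X)=1$ and local connectedness yield a connected open $U\ni e$ with $\partial U$ a singleton. With the paper's definition of order as the number of complementary components, this is the (true but nontrivial) coincidence with the Menger--Urysohn order for dendrites; you should either cite it or bypass it. It is easily bypassed: once $B(e,r)\cap(E(X)\cup B(X))=\{e\}$ is established, choose $c\neq e$ with $[e,c]\subset B(e,r)$ (possible by Lemma \ref{dist}) and let $A$ be the component of $X\setminus\{c\}$ containing $e$; if some $z\in A\setminus[e,c]$ existed, the point $w$ defined by $[e,z]\cap[e,c]=[e,w]$ would satisfy $w\neq e$ (else $e$ separates $z$ from $c$, contradicting $ord(e,X)=1$), $w\neq c$ (since $[e,z]\subset A$ and $c\notin A$), and $w\neq z$, so $w$ would be a branch point in $(e,c)\subset B(e,r)$, which is excluded; hence $A=[e,c)$ is open and $b=c$ already satisfies the conclusion. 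With that repair or citation, your argument is a complete substitute for the external reference.
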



\begin{lem} \label{Le0}
Let $X$ be a dendrite with countable closed set of endpoints and let
$Y$ be a subdendrite of $X$. Denote by $Y_1:=Y \backslash [B(X) \cup
E(Y)]=\cup_{i\in I}J_i$, where  $(J_i)_{i\in I}$ is the sequence of
the  connected components of $Y_1$. Then $I$ is at most countable
and each $J_i$ is an open free arc in $X$.

\end{lem}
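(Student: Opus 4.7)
The plan is to prove the lemma in three stages: first show $Y_1$ is open in $Y$ and characterize its points, then show each component $J_i$ is an open free arc, and finally deduce that $I$ is at most countable. For the first stage, Theorem \ref{CH1} applied to the subdendrite $Y$ gives that $E(Y)$ is closed in $Y$. A useful general observation is: if $Z\subset W$ are subdendrites of $X$, then for any $p\in Z$ one has $ord(p,Z)\leq ord(p,W)$, since distinct components of $Z\setminus\{p\}$ are forced into distinct components of $W\setminus\{p\}$ by uniqueness of arcs in $X$. Applied with $Z=Y$, $W=X$ and $p=e\in E(X)\cap Y$, this yields $ord(e,Y)=1$, so $e\in E(Y)$. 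Combined with $\overline{B(X)}\subset B(X)\cup E(X)$ (Proposition \ref{CH2}), this gives $\overline{B(X)\cap Y}\cap Y\subset B(X)\cup E(Y)$; hence $B(X)\cup E(Y)$ is closed in $Y$, $Y_1$ is open, and each component $J_i$ is open in $Y$. Furthermore, any $y\in Y_1$ satisfies $2\leq ord(y,Y)\leq ord(y,X)\leq 2$, so $ord(y,Y)=ord(y,X)=2$.

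For the second stage, fix a component $J_i$ and let $K:=\overline{J_i}$, a subdendrite of $X$. I aim to show $K$ has no branch point and is therefore an arc. Suppose for contradiction $b\in K$ with $ord(b,K)\geq 3$. The observation above gives $ord(b,Y)\geq ord(b,K)\geq 3$, so $b\in B(Y)\subset B(X)$. If $b\in J_i$, this contradicts $J_i\cap B(X)=\emptyset$. Otherwise $b\notin J_i$; since $J_i$ is connected and disjoint from $\{b\}$, it lies in a single component $C$ of $Y\setminus\{b\}$, whence $K\setminus\{b\}=\overline{J_i}\setminus\{b\}\subset\overline{C}\setminus\{b\}=C$. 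Then all components of $K\setminus\{b\}$ lie in a single component of $Y\setminus\{b\}$, so the observation forces $ord(b,K)\leq 1$, contradicting $ord(b,K)\geq 3$. Hence $K$ has no branch point and is an arc $[p_0,p_1]$. Since $J_i$ is open, connected, and dense in $K$, while $E(K)=\{p_0,p_1\}\subset B(X)\cup E(Y)$ (an endpoint lying in $Y_1$ would, by the opening of $Y_1$, inherit an arc neighborhood from $J_i$ and so have order $2$ in $K$, contradicting its being an endpoint), we conclude $J_i=(p_0,p_1)$, an open free arc in $X$ (free since $J_i\cap B(X)=\emptyset$).

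Finally, the family $(J_i)_{i\in I}$ consists of pairwise disjoint connected subsets of $Y$, so Lemma \ref{cxe} gives $\text{diam}(J_i)\to 0$; hence for each $n\geq 1$ only finitely many $J_i$ have diameter at least $1/n$, and $I$ is at most countable. The main obstacle is the second stage: the case analysis showing that a putative branch point $b$ of $K$ either lies in $J_i$ (contradicting $J_i\cap B(X)=\emptyset$) or lies outside, in which case the connectedness of $J_i$ traps $K\setminus\{b\}$ inside a single component of $Y\setminus\{b\}$, contradicting the supposed high order of $b$ in $K$.
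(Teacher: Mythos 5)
Your proof is correct, and while its first stage tracks the paper's argument, the rest takes a genuinely different (and in places more complete) route. Like the paper, you show $(B(X)\cap Y)\cup E(Y)$ is closed in $Y$ using Theorem \ref{CH1} and Proposition \ref{CH2}; your order-monotonicity observation also supplies the fact $E(X)\cap Y\subset E(Y)$, which the paper uses silently. You then diverge twice. First, you actually prove that $\overline{J_i}$ is an arc by excluding branch points: a putative $b$ with $ord(b,\overline{J_i})\geq 3$ would lie in $B(X)$, hence either in $J_i$ (impossible) or outside it, in which case the connected set $J_i$ confines $\overline{J_i}\setminus\{b\}$ to a single component of $Y\setminus\{b\}$, forcing $ord(b,\overline{J_i})\leq 1$. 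The paper simply writes $\overline{J_i}=[a_i,b_i]$ with no justification, so this is a real addition. Second, for the countability of $I$ the paper injects $i\mapsto\{a_i,b_i\}$ into pairs of points of $B(X)\cup E(Y)$, which is countable by Lemma \ref{card} and the countability of $B(X)$; you instead invoke Lemma \ref{cxe} so that only finitely many components have diameter at least $1/n$. Your route buys something: it does not use the countability of $E(X)$ at all, only its closedness. One shared gloss: both you and the paper pass rather quickly from ``$J_i\cap B(X)=\emptyset$'' to ``$J_i$ is open in $X$''; this really rests on the order-two computation $ord(y,Y)=ord(y,X)=2$ for $y\in Y_1$ that you record in your first stage, so your write-up at least has the needed ingredient on hand.
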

\begin{proof}   Set $F=(B(X)\cap Y)\cup E(Y)$. We will show that
 $Y_1=Y \backslash F$
is closed in $Y$: Indeed, by theorem \ref{CH1}, since $E(X)$ is
closed, so $E(Y)$ is also closed. Let $(b_n)_{n\geq1}$ be an
infinite sequence in $B(X)\cap Y$ converging to $b$. Then $b\in Y$
and by Proposition \ref{CH2}, $b\in B(X) \cup  E(X)$, hence $b\in
(Y\cap B(X)) \cup (Y \cap E(X))\subset (Y\cap B(X)) \cup E(Y)$ so
$F$ is closed in $Y$, hence $Y_1$ is open in $Y$.
 So by (\cite{Nad}, p. 120), each component $J_i$ is open in $Y$. For any $i\in I$,
 $J_i$ is open in $X$, since $J_i \cap B(X)=\emptyset$.

Let us prove that $I$ is at most countable. For any $i\in I$, write
$\overline{J_i}=[a_i,b_i]$ and define the map $h:~I \to (B(X)\cup
E(Y))^2 $
 as follow: $\forall i\in I, h(i)=\{a_i, b_i\}$. The map $h$ is well defined (since $X$ is uniquely arcwise
connected) and it is one-to-one then $I$ is at most countable since
$B(X) \cup E(Y)$ is at most countable.

\end{proof}

\begin{lem} \label{Le1}
Let $f:~X \to X$ be a dendrite map such that $E(X)$ is closed and
countable. Let $a\in Fix(f)$ and  $L:=\omega_f (x)$ an uncountable
$\omega$-limit set such that $L \cap P(f)=\emptyset$ then for any
$y\in L$, there is $p, k \geq0$ such that $[a,f^{k}(x)]\subset
[a,f^{p}(y)]$.
\end{lem}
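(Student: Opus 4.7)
The plan is to argue by contradiction: suppose $f^k(x) \notin [a, f^p(y)]$ for every $p, k \geq 0$. Extract a subsequence $(n_j)$ with $f^{n_j}(x) \to y$; by continuity of $f$ together with $f(a)=a$, one has $f^{n_j+p}(x) \to f^p(y)$ for every $p \geq 0$. Let $r_p: X \to [a, f^p(y)]$ denote the first-point retraction (continuous, as in \cite{Nad}) and set $b^p_j := r_p(f^{n_j+p}(x))$. Then $b^p_j \to f^p(y)$, and the contradiction hypothesis gives $b^p_j \neq f^{n_j+p}(x)$ for all $j, p$, so there is a nontrivial arc $[f^{n_j+p}(x), b^p_j]$ meeting $[a, f^p(y)]$ only at $b^p_j$.

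For each fixed $p$, I would run the dichotomy: either (I) $b^p_j \neq f^p(y)$ along a subsequence, or (II) $b^p_j = f^p(y)$ for all large $j$. In case (I), the point $b^p_j$ sits at a junction of three distinct arc-directions in $X$ (toward $a$, toward $f^p(y)$, toward $f^{n_j+p}(x)$), so $b^p_j \in B(X) \cup \{a\}$; since $b^p_j \to f^p(y) \neq a$ (as $a \notin L$), eventually $b^p_j \in B(X)$, and the discreteness of $B(X)$ (Proposition \ref{CH4}) together with $\overline{B(X)} \subset B(X) \cup E(X)$ (Proposition \ref{CH2}) forces $f^p(y) \in B(X) \cup E(X)$. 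In case (II), $f^p(y) \in [a, f^{n_j+p}(x)]$; if moreover $f^p(y) \in E(X)$, then as an order-one endpoint it cannot be interior to any arc, forcing $f^{n_j+p}(x) = f^p(y)$ for infinitely many $j$, and then the return times $n_{j+1}-n_j$ make $f^p(y)$ periodic, contradicting $L \cap P(f)=\emptyset$. To dispose of case (II) when $f^p(y)$ has order two, one iterates the dichotomy: apply $f$ to the relation $f^p(y) \in [a, f^{n_j+p}(x)]$ and combine with the analysis at $p+1$ (using $f(a)=a$ and continuity) to propagate the branch-point/endpoint conclusion. The upshot is $f^p(y) \in B(X) \cup E(X)$ for every $p$, so $O_f(y) \subset B(X) \cup E(X)$, a countable closed set (Proposition \ref{CH2} and countability of $E(X)$).

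The contradiction is then closed using the uncountability of $L$. Applying Lemma \ref{Le0} to $Y := [\{a\} \cup L]$, together with countability of $B(X) \cap Y$ (Proposition \ref{CH4}) and of $E(Y)$ (Lemma \ref{card} plus countability of $E(X)$), all but a countable subset of $L$ lies in the free-arc pieces of $Y$; in particular $L \setminus (B(X) \cup E(X))$ is uncountable. Choosing $y_0$ in this set yields $y_0 \in O_f(y_0) \subset B(X) \cup E(X)$, contradicting $y_0 \notin B(X) \cup E(X)$, and proving the statement for every $y \in L \setminus (B(X) \cup E(X))$. To cover the remaining countably many $y \in L \cap (B(X) \cup E(X))$, I would approximate such $y$ by generic points $y' \in L \setminus (B(X) \cup E(X))$, apply the already-established inclusion $f^{k'}(x) \in [a, f^{p'}(y')]$ for $y'$, and transfer it to $y$ via continuity of iterates and the dendrite arc structure near $y$.

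The main obstacle is case (II) when $f^p(y)$ is an order-two regular point: one must rule out the scenario in which orbit points of $x$ approach $f^p(y)$ ``from beyond'' without ever crossing $[a, f^p(y)]$, which requires iterating the analysis in $p$ and exploiting $f$-invariance of $L$ together with $f(a)=a$. The closedness of $E(X)$, the countability assumption, and the discreteness of $B(X)$ are all used in an essential way; the transfer from generic $y$ to all of $L$ by approximation is a secondary technicality.
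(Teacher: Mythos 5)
Your reduction of the contradiction hypothesis to ``$O_f(y)\subset B(X)\cup E(X)$'' breaks down exactly where you flag it yourself: case (II) when $f^p(y)$ is a regular point of order two. There the only information available is $f^p(y)\in(a,f^{n_j+p}(x))$, which gives $ord(f^p(y))\geq 2$ and nothing more, and ``iterating the dichotomy'' does not repair this: applying $f$ to the relation $f^p(y)\in[a,f^{n_j+p}(x)]$ only yields $f^{p+1}(y)\in f([a,f^{n_j+p}(x)])$, a subdendrite which need not contain the arc $[a,f^{n_j+p+1}(x)]$ and in any case does not place $f^{p+1}(y)$ on that arc, so no branch-point or endpoint conclusion propagates. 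Since all but countably many points of $X$ have order two in this setting, case (II) is the generic situation, not a boundary case, and the scenario you need to exclude (the orbit of $x$ accumulating on $f^p(y)$ only from the side away from $a$) is consistent with everything you have derived. The closing ``transfer'' step has a second gap: the exponents $p',k'$ produced for a nearby generic $y'$ depend on $y'$, so letting $y'\to y$ does not yield a single pair $(p,k)$ for $y$, and the lemma must hold for every $y\in L$, including the countably many lying in $B(X)\cup E(X)$.

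The ingredient you are missing is that $\omega_f(y)$ itself is uncountable: $\omega_f(y)\subset L$ contains a minimal set, which meets no periodic point (as $L\cap P(f)=\emptyset$), hence is infinite and perfect, hence uncountable. The paper then writes $X\setminus(B(X)\cup E(X))$ as an at most countable union of open free arcs (Lemma \ref{Le0}), picks a free arc $C_j$ with $\omega_f(y)\cap C_j$ uncountable, and chooses $u,v\in\omega_f(y)\cap C_j$ with $u\in(a,v)$, together with small disjoint open subarcs $I_u\ni u$ and $I_v\ni v$ of $C_j$ arranged so that $I_u\subset[a,w]$ for every $w\in I_v$. Some $f^p(y)$ lies in $I_v$ and some $f^{k'}(y)$ lies in $I_u$; since $f^{k'}(y)\in L=\omega_f(x)$ and $I_u$ is open, some $f^{k}(x)$ lies in $I_u$ as well, whence $f^{k}(x)\in[a,f^{p}(y)]$ directly. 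This positive argument avoids the retraction and branch-point analysis entirely and treats every $y\in L$ at once; if you want to keep your contradiction scheme, you would at minimum need to import this uncountability of $\omega_f(y)$ to handle the order-two case.
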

\begin{proof}
 Let $y\in L$. We have $\omega_f(y)$ is a closed
  invariant subset by $f$  then there is a minimal subset, denoted by $K $, in
   $\omega_{f}(y)$. Since $L\cap P(f)=\emptyset$ then $K$ has no periodic point.
   So $K$ is
infinite and has no isolated point, hence $K$ is uncountable so it
is for $\omega_f (y)$. Now, denote by $(C_i)_{i\in\mathbb{N}}$ the
sequence of connected components of $X \backslash (B(X)\cup E(X))$.
By Lemma \ref{Le0}, each $C_i$ is an open free arc in $X$. There is
$j\in \mathbb{N}$ such that $\omega_f (y) \cap C_j$ is uncountable.
Let $u, v\in \omega_{f}(y) \cap C_j$ such that $u\in (a,v)$. There
is two open disjoint arcs $I_u, I_v$ in $C_j$ such that $u\in I_u, v
\in I_v$. Let $p, k^{\prime}\geq 0$ such that $f^{p}(y)\in I_v$ and
$f^{k^{\prime}}(y) \in I_u$, since $f^{k^{\prime}}(y)\in L$, there
is $k>0$ such that $f^k (x) \in I_u$, so we obtain the inclusion
$[a,f^{k}(x)] \subset [a,f^{p}(y)]$.
\end{proof}

\medskip

 \textit{ Proof of Theorem A.}

$(1)$ Denote by $L=\omega_f (x)$. Suppose that $ L\cap P(f)
\nsubseteq E(X)^{\prime} $, there is $a\in L\backslash
E(X)^{\prime}$ such that $f^{N}(a)=a$ for some $N>0.$ Since $L$ is
infinite and $\forall 0\leq i \leq N-1,
f(\omega_{f^N}(f^{i}(x)))=\omega_{f^N}(f^{i+1}(x))$ then
$\omega_{f^N}(f^{i}(x))$ is infinite for any $0\leq i \leq N-1$. Let
$0\leq j \leq N-1$ such that $a\in \omega_{f^N}(f^{j}(x))$, so we
may assume that $a \in Fix(f)$. By Corollary \ref{fo}, since $E(X)$
is closed then $1\leq n:=ord(a)<+\infty$. By Corollary \ref{CH4} and
Lemma \ref{de}, $a$ has a neighborhood, $V$, which is a tree such
that $V \cap B(X) \subset \{a\}$. We can write
$V=\cup_{i=1}^{n}[a,b_i]$ such that the subsets $(a,b_i]; 1\leq i
\leq n$ are pairwise disjoint.
\medskip

\textbf{Claim 1.} \emph{There is $1\leq i_0 \leq n$ and an infinite
sequence of periodic points in $(a,b_{i_0})$ converging to $a$.}\\

 By (\cite{BC}, Lemma 4) $a$
is not isolated relatively to $L$, then there is an infinite
sequence in $L$, say $(y_n)_{n\geq 1}$, converging to $a$. Since $V$
is a neighborhood of $a$ then we may assume that $(y_n)_{n\geq
1}\subset V$. Let $1\leq i_0 \leq n$ such that $a\in
\overline{(a,b_{i_0}] \cap (y_n)_{n\geq 1}}$. By considering a sub
sequence we may assume that $(y_n)_{n\geq 1}\subset (a,b_{i_0}]$.
Let $c\in (a,b_{i_0})$ arbitrarily. There is $1 \leq n_1<n_2<n_3$
such that $y_{n_2}\in (y_{n_1},y_{n_3}) \subset (a,c)$. Let $I_1,
I_2$ and $I_3$ a disjoint open arc in $(a,c)$ such that $y_{n_i}\in
I_i; i=1,2,3$. There is $n,m\geq 0$ and $p,q>0$ such that
$f^{n}(x)\in I_1, f^{m}(x) \in I_3$ and $f^{p}(f^{n}(x)),
f^{q}(f^{m}(x))\in I_2$. So we have $\{f^{p}(f^{n}(x)),
f^{q}(f^{m}(x))\} \subset (f^{n}(x),f^{m}(x))$, then by \cite{AKLS}
(one can use also Theorem 2.13 of \cite{MS}), $P(f) \cap
(f^{n}(x),f^{m}(x)) \neq \emptyset $ hence $P(f) \cap (a,c) \neq
\emptyset$. This finish the proof of Claim 1.
\\ Now, denote by $(C_k)_{1\leq k \leq n}$ the sequence of connected components of
 $X \backslash \{a\}$ such that  $b_k\in C_k, \forall 1 \leq k \leq
 n$ and  let $c\in Fix(f^r)\cap
(a,b_{i_0}); r\geq 1$ such that $L \cap (C_{i_0} \backslash
[a,c])\neq \emptyset $. Denote by $g=f^r$. There is $n\geq 0$ such
that $f^{n}(x) \in (a,c)$, since $L=\omega_f (f^{n}(x))$, we may
assume that $n=0$. We distinguish two cases both of them lead us
to a contradiction:\\
\medskip

\textbf{Case 1.} \emph{If} $O_g (x) \nsubseteq C_{i_0}$. Let $1\leq
j \leq n; j\neq i_0$ and $p, k>0$ such that $g^{p}(x) \in C_j$ and
$f^{k}(x)\in C_{i_0} \backslash [a,c]$. Let $I=[a,x]$ and $J=[x,c]$.
Then we have $f^{k}(I)\supset [a,f^{k}(x)]\supset [a,c]=I \cup J$
and $f^{rp}(J)=g^{p}(J)\supset [c,g^{p}(x)] \supset [a,c]=I \cup J$,
so $I, J$ form an arc horseshoe then by \cite{KKM} and \cite{BGKM}
we have
$h(f)>0$, absurd. \\
\medskip

\textbf{Case 2.} \emph{If} $O_g (x) \subseteq C_{i_0}$. Denote by
$F_c= \cup_{n=0}^{+\infty}g^{-n}(c) \cap [a,c]$.
\\
$(a)$ If there is $z\in F_c \backslash \{c\}$ such that  $ (c,z]
\cap O_g (x) \neq \emptyset.$ Let $n\geq 0, k>0$ such that $g^{n}(x)
\in (c,z) $ and $g^{k}(z)=c$. Let $I=[c,g^{n}(x)]$ and
$J=[z,g^{n}(x)]$. Since $a\in \omega_g (x)$ then there is $p>k$ such
that $I \cup J \subset [c,g^{n+p}(x)]$. So we have $g^{p}(I)\supset
[c,g^{n+p}(x)]\supset I\cup J$ and $g^{k}(J) \supset
[c,g^{n+k}(x)]\supset I \cup J$, hence $g^{p}(J)=g^{p-k}(g^{k}(J))
\supset [c,g^{n+p}(x)]\supset I \cup J$ so $I, J$ form an arc
horseshoe for $g$,
hence $h(f)=\frac{1}{r}h(g)>0$, absurd. \\

$(b)$ If for any $z\in F_c; [c,z] \cap O_g (x) = \emptyset$. \\

\textbf{Claim 2.} \emph{We
have $O_g (x) \subset [a,c]$.}\\
  Since otherwise, there is $n>0$ such that
$g^{n}(x) \notin  [a,c]$. $(i)$ If $g^n(x) \in C_{i_0}\backslash
[a,c]$ then $g^{n}([a,x])\supset [a,g^{n}(x)]\ni c$, so there is
$c_{-1}\in (a,x); g^{n}(c_{-1})=c$,
 hence $x\in [c,c_{-1}] \cap O_g(x)$, a contradiction.
 $(ii)$ If $g^n(x) \in C_j$ for some $j\neq i_0$. Then $g^n ([c,x])\supset [c,g^n(x)]\ni a$. Let $a_{-1}\in (c,x); g^n(a_{-1})=a$ and $k>0$ such that $f^k (x) \in C_{i_0}\backslash [a,c]$. Then $f^{rn}([c,a_{-1}])=g^n ([c,a_{-1}])\supset [a,c]$ and $f^{k}([a,a_{-1}])\supset f^k ([a,x])\supset [a,f^k (x)]\supset [a,c]$, so $[a,a_{-1}]$ and $[c,a_{-1}]$ form an arc horseshoe for $f$, absurd. This finish the proof of Claim 2.\\
 \medskip

  Now, we have $\omega_g (x) \subset [a,c]$. Denote by
$[a,d]=[\omega_{g}(x)]$ the convex hull of $\omega_g (x)$. We remark
that $d\in \omega_g (x)$ since $\omega_g (x)$ is closed.\\
\medskip

  \textbf{Claim 3.} For any $n\geq 0$, $g^{n}([a,d))\subset [a,c)$.\\

 Suppose contrarily that there is $n>0$ such that $g^{n}([a,d))\nsubseteq
[a,c)$. Let $z\in (a,d), g^{n}(z) \notin [a,c)$. $(i)$ If $g^{n}(z)
\in C_{i_0}\backslash [a,c)$ then $c$ has an antecedent $c_{-1}$ by
$g^n$ in $(a,z]$, since $d\in [c,c_{-1})\cap \omega_g (x)$ then
$O_g(x)\cap (c,c_{-1})\neq \emptyset$, a contradiction. $(ii)$ If
$g^{n}(z) \in C_j$ for some $j \neq i_0$.  Then $g^n([c,z])\supset
[c,g^n(z)]\ni a$ then there is $a_{-1}\in (c,z]$ such that $g^n
(a_{-1})=a$. Let $k,p>0$ such that $f^{k}(x)\in (a,a_{-1}),
f^{k+p}(x)\in C_{i_0}\backslash [a,c]$. We have
$f^{p}([a,f^{k}(x)])\supset [a,f^{k+p}(x)]\supset [a,f^{k}(x)] \cup
[f^{k}(x),c]$ and $f^{nr}([c,f^{k}(x)])\supset
f^{nr}([c,a_{-1}])=g^{n}([c,a_{-1}])\supset [c,a]= [c,f^{k}(x)] \cup
[f^{k}(x),a]$, hence $[c,f^{k}(x)], [a,f^{k}(x)]$ form an arc
horseshoe for $f$, absurd. This finish the proof of the Claim 3.

Now, we have $ g^{n}([a,d])=g^{n}([a,d))\cup g^{n}(d) \subset
[a,c)\cup \omega_g (x) \subset [a,c]$. In the other hand, since
$[a,d]\supset \omega_g (x)$ so $g([a,d])\supset [a,d]$. The subset
$I=\cup_{n=0}^{+\infty}g^{n}([a,d])$ is connected included in
$[a,c]$ and strongly invariant by $g$  so it is for $J=\overline{I}
\subset [a,c]$. Now $g_{|J}:~J \to J$ is a continuous interval map
with $\omega_{g_{|J}}(x)=\omega_g (x); x\in J$ is infinite
containing a fixed point $a$, so by \cite{Sar}, $h(g)\geq
h(g_{|J})>0$ hence $h(f)>0$, absurd. We conclude that $L \cap P(f)
\subset E(X)^{\prime}$.
\medskip

$(2)$ Now, suppose that $L=\omega_f(x)$  uncountable and $E(X)$
countable. By $(1)$ of this Theorem, we have $L\cap P(f) \subset
E(X)^{\prime}$. Suppose that there is $a\in L \cap P(f)$, we may
assume that $a\in Fix(f)$. By Lemma \ref{Le0}, write $X\backslash
[B(X)\cup E(X)]=\cup_{i=1}^{+\infty}J_i$ where each $J_i$ is an open
free arc in $X$. There is $i_0>0$ such that $L \cap J_{i_0}$ is
uncountable. Write $J_{i_0}=(u,v)$ such that $v\in (a,u)$. There
exists $c\in Fix(f^r); r\geq 1$ such that $L \cap (u,c)$ is
uncountable. Denote by $g=f^r$. There is $0\leq i \leq r-1$ such
that $\omega_{g}(x_i)\cap (u,c)$ is uncountable; $ x_i=f^i (x)$.
There is $k\geq0, p>0$ such that $g^{k}(x_i), g^{k+p}(x_i) \in
(w,c)\cap (u,c)$ for some $w\in \omega_{g}(x_i)$. Denote by $X_0,
X_1$ the connected components of $X \backslash \{c\}$ such that
$a\in X_1$ and let $l_j= X_j \cap \omega_{g}(x_i); j=0,1$. By $(1)$,
$c\notin \omega_g(x_i)$ then $l_0$ and $l_1$ are two non empty
clopen sets relatively to $\omega_{g}(x_i)$, hence by Lemma
\ref{WIbis} we have $\forall n\geq1, g^{n}(l_0)\nsubseteq l_0$. Let
$y\in l_0$ such that $g^{p}(y) \in l_1$.

We will build an arc horseshoe. We
distinguish two cases:\\
\medskip

\textbf{Case 1.} \emph{There is an infinite sequence $(n_l)_{l>0}$ }
\emph{such that} $(g^{n_l}(x_i))_{l>0}$ \emph{converges to} $y$
\emph{and}  $g^{n_l}(x_i)\in (c,y); \forall l>0$. By continuity of
$g^p$, there is $n>0$ such that $g^n (x_i) \in X_0, g^{n+p}(x_i) \in
X_1$.
 Denote by
$I=[c,g^{k}(x_i)], J=[g^{k}(x_i),g^{n}(x_i)]$ if $g^{k}(x_i) \in
(c,g^{n}(x_i))$, (resp. $I=[c,g^{n}(x_i)],
J=[g^{n}(x_i),g^{k}(x_i)]$ if $g^{n}(x_i)\in (c,g^{k}(x_i))$). We
have $g^{p}(I) \cap g^{p}(J)\supset [c,g^{k+p}(x_i)]$ (respectively,
$ g^{p}(I) \cap g^{p}(J)\supset [c,g^{n+p} (x_i)]$). There is $s>0$
such that $[c,g^{k+p+s}(x_i)]\supset I \cup J$ hence $g^{p+s}(I)
\cap g^{p+s}(J) \supset I \cup J$ (resp. since $g^k(x_i)\in (c,w);
w\in \omega_g(x_i)$ then there is $r>0$ such that
$[c,g^{n+p+r}(x_i)]\supset I \cup J$ hence $g^{p+r}(I) \cap
g^{p+r}(J)\supset I \cup J$). So $I, J$ form an arc horseshoe for
$f$, absurd.
\medskip

\textbf{Case 2.} \emph{There is an  infinite sequence $(n_l)_{l>0}$
}  \emph{such that} $(g^{n_l}(x_i))_{l>0}$\emph{ converges to} $y$
\emph{and}  $y\in (c,g^{n_l}(x_i)); \forall l>0$.
 Similarly as in Case 1, we build an arc horseshoe by theses three points $c, g^{k}(x_i), y$ (resp. by $c,g^{n}(x_i),g^{k}(x_i)$ for a convenient integer $n$) if $g^{k}(x_i)\in (c,y))$ (resp. if $y\in (c,g^{k}(x_i))$).


This finish the proof of Theorem A. \hfill $\square$

\section{\textbf{Examples of dendrite maps with zero topological entropy.}}
\subsection{Example $1$.} We build a dendrite $X$ with $E(X)$ countable closed set and a map $f:~X\to X$ with zero topological entropy     having an infinite $\omega$-limit set containing a periodic point.\\
\medskip

\textbf{Construction of the dendrite $X$.} For any $n\geq0$, let
\begin{enumerate}
\item $a_n=(1-\frac{1}{n+1},0), b_n=(1-\frac{1}{n+1},\frac{(-1)^{n}}{n+1})$
and $e=(1,0)$,
\item $w_n\in (a_n,b_n)$,
\item $(b_n^k)_{k\geq 0}$ be a monotone sequence in $[b_n,w_n)$
converging to $w_n$ where $b_n^0=b_n$,
\item $X=\cup_{i=0}^{+\infty}[a_i,b_i] \cup [a_0,e]$.
\end{enumerate}
We can see that $X$ is a dendrite with $E(X)=\{b_n; n\geq 0\}
\cup\{e\}$  closed and $E(X)^{\prime}=\{e\}$.\\
\medskip

 \textbf{Construction of the  map $f:~X \to X$.} (See Figure $1$)
 \quad We define $f$ as follows: For any $n, k\in \mathbb{Z}_+$,
 \begin{enumerate}
 \item $f(e)=e$, \\ $f$ maps linearly
 \item  $[a_{n},a_{n+1}]$ to
 $[a_{n+1},a_{n+2}]$ such that $f(a_{n})=a_{n+1}$,
 \item  $[a_{2n+2},w_{2n+2}]$ to
 $[a_{2n+3},w_{2n}]$,
 \item  $[a_{2n+1},w_{2n+1}]$ to
 $[a_{2n+2},w_{2n+3}]$,
 \item  $[a_0,w_0]$ to $[a_1,w_1]$,
 \item  $[w_{2n+2},b_{2n+2}] $ to $[w_{2n},b_{2n}]$ such that
 $f(b_{2n+2}^k)=b_{2n}^{k+1}$,
 \item  $[w_0,b_0]$ to $ [w_1,b_1]$ such that $f(b_0^k)=b_1^k$,
 \item  $[w_{2n+1},b_{2n+1}^1]$ to $[w_{2n+3},b_{2n+3}]$ such that
 $f(b_{2n+1}^{k+1})=b_{2n+3}^{k}$,
 \item  $[b_{2n+1},b_{2n+1}^1]$ to $ [b_{2n+3},b_{2n+2}]$.
  \end{enumerate}
  \begin{figure}[H]
\begin{center}
\includegraphics[width=14cm,height=8cm]{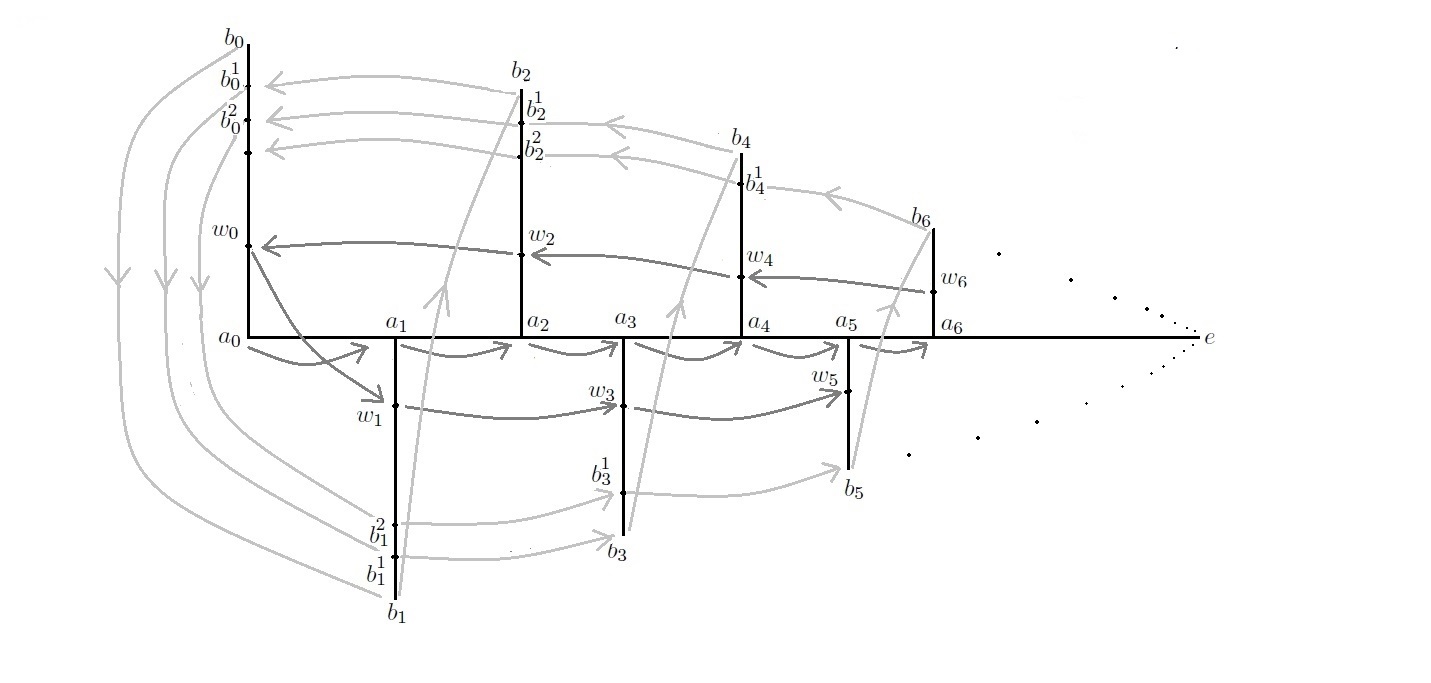}
\caption{Dendrite with $E(X)^{\prime}$ is reduced to one point.}
\end{center}
\end{figure}
   Denote by $x=b_0$ and for any $n\geq 0, x_n=f^{n}(x)$. Then the first fifteen elements of the orbit of $x$ are:

   $ x=b_0,  x_1=b_1, x_2=b_2, x_3=b_0^1, x_4=b_1^1, x_5=b_3,
   x_6=b_4, x_7=b_2^1, x_8=b_0^2, x_9=b_1^2, x_{10}=b_3^1,
   x_{11}=b_5, x_{12}=b_6, x_{13}=b_4^1, x_{14}=b_2^2, x_{15}=b_0^3 .$

\begin{lem} The map $f:~X \to X$ obtained is continuous and satisfy the
following properties:
\begin{enumerate}
\item $\omega_f (b_0) =\{w_n; n\geq0 \} \cup\{e\}$. Therefore  $(b_0,e)$ is a
Li-Yorke pair,
\item for any $y\in X$, we have either $\omega_f(y)=\{e\}$ or
$\omega_f(y)=\omega_f(b_0)$,
\item $f$ has zero topological entropy,
\item $(X,f)$ is  proximal i.e any pair $(x,y)\in X ^2$ is proximal.
\end{enumerate}
\end{lem}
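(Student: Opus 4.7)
The plan is to extract the combinatorial orbit structure of $b_0$ from the piecewise-linear definition of $f$ and then deduce each of the four claims. The explicit list of the first fifteen iterates is the seed of an inductive description.

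For item (1), I would prove by induction that the forward orbit of $b_0$ consists precisely of the family $\{b_n : n\geq 0\} \cup \{b_n^k : n\geq 0,\ k\geq 1\}$, each point visited exactly once, using the transition rules (6)--(9) to move between parity classes. Since for each fixed $n$ the sequence $b_n^k$ converges to $w_n$ as $k\to\infty$, and since the orbit meets every $b_n^k$, each $w_n$ belongs to $\omega_f(b_0)$. Since the spikes shrink, $b_n\to e$, and the orbit meets every $b_n$, so $e\in\omega_f(b_0)$. The reverse inclusion follows because any point $p\notin\{w_n\}\cup\{e\}$ has a neighborhood meeting the discrete set $\{b_n,b_n^k\}$ only finitely often. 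The Li--Yorke conclusion is then immediate: $e$ is fixed and $e\in\omega_f(b_0)$ gives $\liminf d(f^n(b_0),f^n(e))=0$ (proximal), while $w_0\in\omega_f(b_0)$ with $w_0\neq e$ gives $\limsup d(f^n(b_0),f^n(e))\geq d(w_0,e)>0$ (not asymptotic).

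For item (2), I would split cases on the location of $y$. If $y\in[a_0,e]$, rule (2) shifts the arc $[a_n,a_{n+1}]$ to $[a_{n+1},a_{n+2}]$, so the iterates slide toward $e$ along the backbone and $\omega_f(y)=\{e\}$; the case $y=e$ is trivial. Otherwise $y$ lies on some spike, and the rules (3)--(9) force some finite iterate $f^N(y)$ to coincide with one of the points $b_n^k$ or $b_n$ already on the orbit of $b_0$; hence $\omega_f(y)=\omega_f(b_0)$. For item (3), I would verify by inspection of rules (2)--(9) that $f$ has no arc horseshoe: the map is injective on each linear piece and the images are arranged so that no pair of adjacent arcs in $X$ has forward images jointly covering their union. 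The theorem of Kato--Kawamura--Matsumoto cited above then gives $h(f)=0$. For item (4), the classification in (2) shows every $\omega_f(y)$ contains $e$ (in fact $e$ is the unique accumulation point of $\omega_f(y)$); combined with the observation that, by the explicit orbit pattern, the visit times of any point to $B(e,\varepsilon)$ have asymptotic density one, we obtain $\liminf d(f^n(y),f^n(z))=0$ for every pair $(y,z)$.

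The main obstacle is the combinatorial induction of Step~1, which must track the interleaving of even- and odd-indexed spikes dictated by rules (3)/(4) and (6)/(8); once this pattern is pinned down the remaining items follow readily. A secondary subtlety is step (4), where one must argue not merely that each orbit visits $B(e,\varepsilon)$ infinitely often but that the visit times form a large enough set to synchronize two orbits; this reduces, via the classification of (2), to the density-one statement for the single orbit of $b_0$.
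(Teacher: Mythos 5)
Your argument for item (3) has a genuine logical gap. The theorem of Kocan--Kurkova--Malek that the paper cites states only one direction: if $f$ has an arc horseshoe then $h(f)>0$. You invoke the converse --- ``no arc horseshoe, hence $h(f)=0$'' --- which is not what that theorem gives, and which in any case would at minimum require excluding horseshoes for every iterate $f^n$, not just for $f$; moreover ``inspection'' over all pairs of arcs and all exponents $n,m$ is not a finite check. The paper's route is entirely different and much shorter: since $E(X)$ is countable, the Mai--Shi theorem (cited as \cite{MS} in the paper) gives $\overline{R(f)}=\overline{P(f)}$, and since $P(f)=\{e\}$ one gets $\overline{R(f)}=\{e\}$; then the standard fact $h(f)=h(f_{|\overline{R(f)}})$ yields $h(f)=0$. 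Note that this is consistent with item (1): no point of the orbit of $b_0$, and none of the $w_n$, is recurrent, so $R(f)$ really is $\{e\}$. Without some substitute for the Mai--Shi step, your proof of (3) does not go through.

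Two smaller points. In item (2) your stated mechanism --- that every point on a spike has a finite iterate landing exactly on some $b_n^k$ or $b_n$ --- is false: a point interior to a gap $(b_n^k,b_n^{k+1})$ is mapped homeomorphically into another such gap (e.g.\ $f$ sends $(b_{2n+2}^k,b_{2n+2}^{k+1})$ onto $(b_{2n}^{k+1},b_{2n}^{k+2})$) and never hits a marked point; the conclusion $\omega_f(y)=\omega_f(b_0)$ still holds because the iterates are trapped between consecutive marked points whose gaps shrink toward the $w_m$'s, but that is a different argument. For item (4) your density-one idea is workable but unproved as stated; the paper argues instead via syndeticity: the visit times of any orbit to $B(e,\varepsilon/2)$ have bounded gaps $L$, while the orbit of $x$ eventually spends $L+1$ consecutive steps inside $B(e,\varepsilon/2)$ because $e$ is fixed and lies in $\omega_f(x)$, which forces a simultaneous visit. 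Items (1) and the Li--Yorke conclusion are fine and match what the paper leaves as ``easy.''
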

\begin{proof}
It is easy to prove $(1)$ and $(2)$. Let prove $(3)$. Denote by
$R(f)$ the set of recurrent points of $f$ i.e $R(f):=\{x\in X, x\in
\omega_f(x) \}$. Since $E(X)$ is countable then by \cite{MS} we have
$\overline{R(f)}=\overline{P(f)}$. Since $P(f)=\{e \}$ then
$\overline{R(f)}=\{e\}$. Now by \cite{BC}, page 196 we have
$h(f)=h(f_{|\overline{R(f)}})=h(f_{|\{e \}})=0$.\\
$(4)$ Let prove that $(X,f)$ is proximal. Let $(x, y) \in X^2$. Fix
$\varepsilon>0$. There is $L>0$ such that for any $i\geq 0$,
$\{f^i(y),f^{i+1}(y),\dots,f^{i+L}(y)  \} \cap
B(e,\frac{\varepsilon}{2})\neq \emptyset$.
 Since $e=f(e)\in \omega_f(x)$ 
there is $p>0$ such that $\{  f^p(x),f^{p+1}(x),\dots,f^{p+L}(x)\}
\subset B(e,\frac{\varepsilon}{2})$. Let $0\leq k \leq L$ such that
$f^{p+k}(y)\in
B(e,\frac{\varepsilon}{2})$, we obtain $d(f^{p+k}(x),f^{p+k}(y))<\varepsilon$.\\
\end{proof}
\medskip
\subsection{Example $2$.}
We will prove that there is  a non chaotic dendrite map $f:~X \to X$
with $E(X)$ countable but non closed set having an uncountable
$\omega$-limit set containing a periodic point.
\medskip

\textbf{ Construction of the dendrite $X$.}\\
  For any $n\geq 1$, denote by:
 \begin{itemize}
 \item $A=(0,0)$ and $B=(1,0)$,
 \item   $S_n=\{\frac{i}{2^n} \text{ where } 1\leq i \leq 2^n \text{ is odd } \}$
 \item $S_{2n+1}=\{a_{2^{2n}}>a_{2^{2n}+1}>\dots>a_{2^{2n+1}-1}
 \}; n\geq 0$ and $S_{2n}=\{a_{2^{2n-1}}<a_{2^{2n-1}+1}<\dots<a_{2^{2n}-1}
 \}$,
 \item $I_k=[A_k,B_k]$ where $A_k=(a_k,0)$ and $B_k=(a_k,\frac{1}{n+1})$ for any
 $n\geq 0$ and $k\in\{2^{n},2^{n}+1,\dots,2^{n+1}-1\}$.
\end{itemize}
 So the set $X:=[A,B]\cup (\cup_{k\geq1} I_k)$ is a dendrite with
$E(X)=\{B_k; k\geq 1 \}$ and $\overline{E(X)}=E(X)\cup [A,B]$.
\medskip

 \textbf{ Construction of the  map $f:~X \to X$.}( See Figure $2$).\\
 The map $f$ is defined as follow: $f$ fix any point in $[A,B]$ and
 for any $k\geq 1$, $f$ maps linearly $I_k$ to $I_{k+1}$ such that the center of $I_k$ is sent to
 $A_{k+1}$.

 \medskip

 The map $f:~X \to X$  satisfies the
following properties:  for any $k\geq 0, \omega_f(B_k)=[A,B]$ and
for any $y\in X \backslash E(X)$, there is $p\geq0$ such that $f^p
(y)\in [A,B]$. It follows that $(u,v) \in X^2$ is a Li-Yorke pair if
and only if either $(u,v)$ or $(v,u) $ lies to $ E(X) \times X
\backslash E(X)$. Then $f$ is not chaotic so $h(f)=0$ but
$\omega_f(B_0)=[A,B]=Fix(f)$.

\begin{figure}[H]
\begin{center}
\includegraphics[width=14cm,height=6cm]{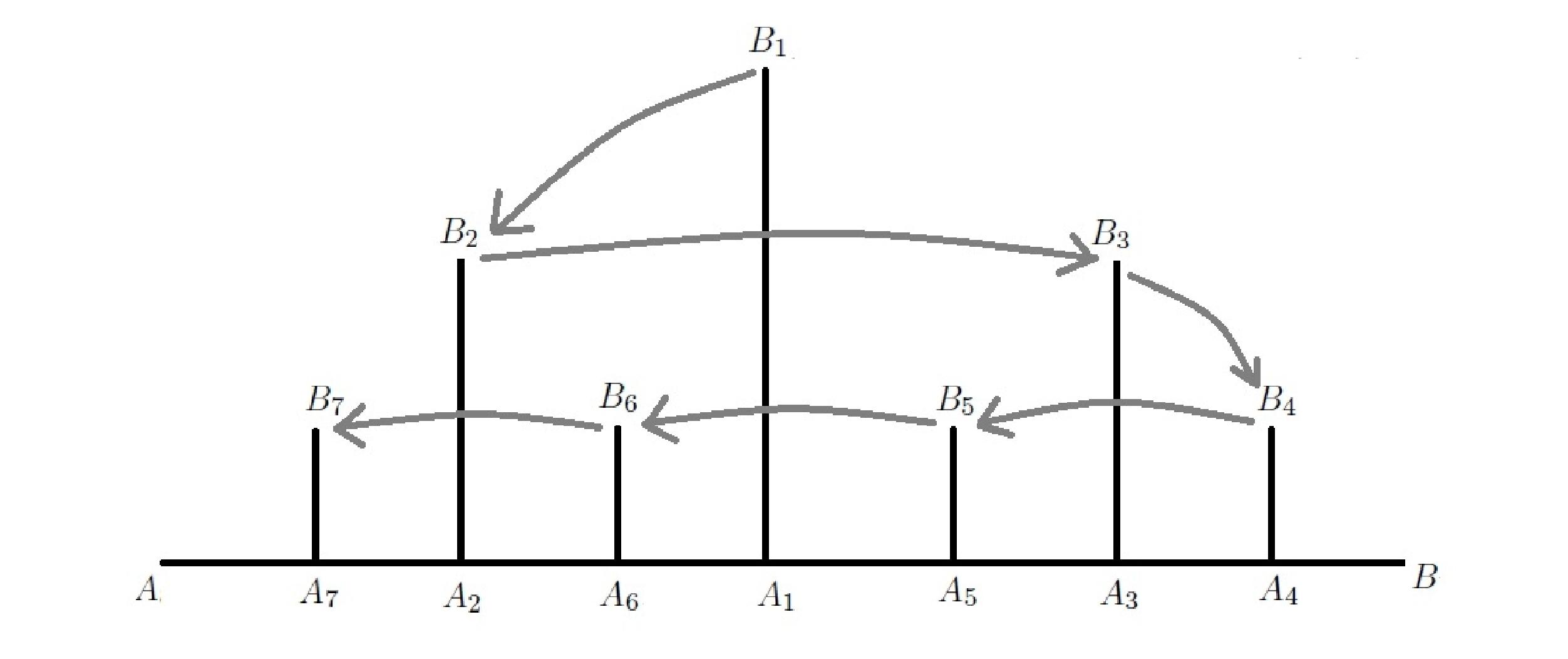}
\caption{Dendrite with a non closed countable  set of endpoints.}
\end{center}
\end{figure}

\subsection{Example $3$.}
\begin{lem} \cite{FW} \label{sym}
There is $s\in \Sigma_2:=\{0,1\}^{\mathbb{Z}_+}$ a recurrent point
such that $\omega_{\sigma}(s)$ is an uncountable $\omega$-limit set
containing a fixed point and $\sigma_{| \omega_{\sigma}(s)}$ has
zero topological entropy, where $\sigma=\Sigma_2 \to \Sigma_2$ is
the shift map defined as follow: $\forall x=(x_n)_{n\geq0}\in
\Sigma_2, \sigma(x)=(x_{n+1})_{n\geq0}$.
    \end{lem}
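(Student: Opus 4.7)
The plan is to construct $s$ as the limit of an increasing sequence of finite words $w_k$, each built to contain its predecessor both as a prefix and as a suffix, with the ``middle'' incorporating both long runs of $0$s and long prefixes of a Sturmian sequence. This self-similar structure will force $s$ to be recurrent by design, while the $0$-runs will place the fixed point $0^\infty$ in the orbit closure and the Sturmian prefixes will place an uncountable minimal subshift in it.

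First I would fix a Sturmian sequence $u \in \Sigma_2$; its orbit closure $M := \omega_\sigma(u)$ is minimal, uncountable, and has subword complexity $p_u(n) = n+1$, so that $\sigma_{|M}$ has zero topological entropy. Then I would choose sequences of positive integers $(L_k)_{k \geq 0}$ and $(N_k)_{k \geq 0}$ with $L_k, N_k \to \infty$, set $w_0 := 01$, and define recursively $w_{k+1} := w_k \cdot 0^{L_k} \cdot u[0, N_k) \cdot w_k$. Since each $w_k$ is a prefix of $w_{k+1}$, the limit $s := \lim_{k \to \infty} w_k \in \Sigma_2$ is well-defined.

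Next I would verify the four required properties. For recurrence of $s$: each $w_k$ appears in $w_{k+1}$ both at the beginning and at the end, so by induction $w_k$ occurs in $s$ infinitely often, and since every prefix of $s$ equals some $w_k$, this gives $s \in \omega_\sigma(s)$. For the fixed point in the closure: the block $0^{L_k}$ appears in $s$ for every $k$ with $L_k \to \infty$, so shifting to the start of each $0^{L_k}$ produces a sequence converging to $0^\infty \in \omega_\sigma(s)$. For uncountability of $\omega_\sigma(s)$: the block $u[0, N_k)$ appears in $s$, and shifting to its starting position yields a sequence converging to $u$, so $u \in \omega_\sigma(s)$; as $\omega_\sigma(s)$ is closed and $\sigma$-invariant, it contains $\omega_\sigma(u) = M$, which is uncountable. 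For zero entropy: every length-$n$ factor of $s$ is either a factor of $u$ (so at most $n+1$ such), the pure block $0^n$, or straddles one of the three internal boundaries ($w$/$0$-block, $0$-block/$u$, or $u$/$w$) inside some $w_k$, yielding $p_s(n) = O(n^2)$ and hence $h(\sigma_{|\omega_\sigma(s)}) = 0$.

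The main obstacle is the tension between requiring $s$ to be recurrent (which forces every prefix of $s$ to reappear infinitely often) and requiring $\omega_\sigma(s)$ to be non-minimal (since it must contain both the fixed point $0^\infty$ and the uncountable minimal subshift $M$). A uniformly recurrent $s$ would give a minimal orbit closure, which cannot contain $0^\infty$ unless the closure is trivial. The palindromic recursion $w_{k+1} = w_k \cdot (\text{middle}_k) \cdot w_k$ is the key device for reconciling these demands: it guarantees repetition of every $w_k$ by construction (giving recurrence without uniform recurrence), while the middle blocks $0^{L_k}$ and $u[0, N_k)$ separately deposit $0^\infty$ and the Sturmian subshift $M$ into the orbit closure.
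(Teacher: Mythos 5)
The paper does not prove this lemma at all: it is imported as a black box from the reference [FW] (Fu--Wang, \emph{The construction of chaotic subshifts}), so your self-contained construction is necessarily a different route, and it is a sound one. The skeleton --- recurrence forced by the palindromic recursion $w_{k+1}=w_k\,(\text{middle}_k)\,w_k$, the blocks $0^{L_k}$ depositing the fixed point $0^\infty$, the Sturmian prefixes depositing an uncountable minimal zero-entropy subshift, and a polynomial factor-complexity bound giving zero entropy --- is exactly the standard way to manufacture such an example, and each of the four verifications goes through. Two points deserve tightening. First, the trivial one: not every prefix of $s$ \emph{equals} some $w_k$, it is a prefix of some $w_k$; the recurrence argument is unaffected since an occurrence of $w_k$ yields occurrences of all its prefixes. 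Second, and more substantively, the complexity count $p_s(n)=O(n^2)$ needs the block lengths to be calibrated: you should choose $L_k, N_k \geq |w_k|$ (not merely $L_k,N_k\to\infty$), so that in the level-$k$ decomposition of $s$ into blocks from $\{w_k\}\cup\{0^{L_j}, u[0,N_j) : j\geq k\}$ every block has length at least $|w_k|$, and hence a window of length $n\leq |w_k|$ (with $k$ minimal such) meets at most two consecutive blocks. With that choice the enumeration works --- the only boundary type contributing more than $O(n)$ words is $u[0,N_j)\to w_k$, where the left piece is a length-$i$ factor of $u$ (at most $i+1$ choices), giving the $O(n^2)$ total; with slowly growing $L_k,N_k$ a window could straddle many short blocks and the count would require a separate argument. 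Since the language of $\omega_\sigma(s)$ is contained in that of $s$, the entropy conclusion follows. What your approach buys is independence from the (hard-to-access, Chinese-language) reference; what it costs is having to carry out this bookkeeping explicitly.
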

\medskip

\begin{lem} $($\cite{ACCS}, Proposition $6.8$, p. $16)$ \label{unc}
Each dendrite with an uncountable set of its endpoints contains a
homeomorphic copy of the Gehman dendrite.
\end{lem}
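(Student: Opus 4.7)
The plan is to embed a Gehman dendrite into $X$ by an inductive binary-splitting construction, producing a family of points $b_s\in X$ and nested closed subdendrites $D_s\subseteq X$ indexed by finite binary strings $s\in\{0,1\}^{<\omega}$, so that the closure of the branching structure is homeomorphic to Gehman's example.

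The key ingredient is a splitting lemma: whenever $D\subseteq X$ is a subdendrite with $E(X)\cap D$ uncountable, there is a point $b\in D$ such that two components of $D\setminus\{b\}$ each contain uncountably many points of $E(X)\cap D$. To prove this, note that $D$ is separable, so the set of condensation points of $E(X)\cap D$ in $D$ is non-empty and perfect; picking two distinct condensation points $p_1,p_2$ and any interior point $b$ of the arc $[p_1,p_2]$, the two components of $D\setminus\{b\}$ containing $p_1$ and $p_2$ are open neighborhoods in $D$ of condensation points and hence each contain uncountably many endpoints of $X$.

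Setting $D_\emptyset:=X$ and iteratively applying the splitting lemma, I would define $b_s$ and take $D_{s0},D_{s1}$ to be the closures in $X$ of the two selected components of $D_s\setminus\{b_s\}$; each $D_{si}$ is a subdendrite with $E(X)\cap D_{si}$ uncountable, so the induction continues. I would interleave this with auxiliary refinements so that for every $\sigma\in\{0,1\}^{\mathbb{N}}$ one has $\mathrm{diam}(D_{\sigma|n})\to 0$. With this in force, the nested intersection $\bigcap_n D_{\sigma|n}$ reduces to a single point $e_\sigma\in X$, and the map $\sigma\mapsto e_\sigma$ is a topological embedding of the Cantor set $\{0,1\}^{\mathbb{N}}$ into $X$. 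The subset
\[
G:=\{e_\sigma:\sigma\in\{0,1\}^{\mathbb{N}}\}\cup\bigcup_{s}\bigl([b_s,b_{s0}]\cup[b_s,b_{s1}]\bigr)
\]
is then a closed subdendrite of $X$ whose branch points are exactly the $b_s$ (each of order $3$ in $G$) and whose set of endpoints is the Cantor set $\{e_\sigma\}$; these properties characterize the Gehman dendrite up to homeomorphism, so $G$ is a copy of it.

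The main obstacle is arranging the diameters $\mathrm{diam}(D_{\sigma|n})$ to shrink along every infinite branch: the $2^n$ level-$n$ subdendrites are pairwise disjoint but only finite in number, so one cannot apply Lemma~\ref{cxe} directly at a single level. The fix is to interleave the binary splittings with additional partitioning steps, choosing auxiliary interior points and relabelling, so that along any branch the chain encounters infinitely many pairwise-disjoint pieces to which Lemma~\ref{cxe} applies. This is the point where care is needed, but it introduces no new mathematical input beyond Lemma~\ref{cxe} and the splitting lemma above.
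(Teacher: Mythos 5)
The paper offers no proof of this lemma: it is imported verbatim from Ar\'evalo--Charatonik--Covarrubias--Sim\'on (\cite{ACCS}, Proposition 6.8), so there is no internal argument to compare yours against; I can only judge your proposal on its own. Your splitting lemma is correct (all but countably many points of an uncountable subset of a separable metric space are condensation points of it, and any interior point of $[p_1,p_2]$ separates $p_1$ from $p_2$ in a dendrite), and the overall strategy is sensible.

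However, there is a genuine gap exactly at the step you flag, and flagging it is not filling it. Nothing in the splitting lemma or in Lemma \ref{cxe} forces $\mathrm{diam}(D_{\sigma|n})\to 0$: Lemma \ref{cxe} concerns pairwise \emph{disjoint} connected sets, whereas along a branch the $D_{\sigma|n}$ are nested. The sibling components encountered along a branch are pairwise disjoint and therefore do shrink, but that controls only what is discarded, not what is kept. Concretely, run your construction on the Gehman dendrite itself: at each stage choose $p_1,p_2$ to be two endpoints lying deep in the leftmost subtree of the current piece, so that $b_s$ sits far down on the left; then one of the two selected components of $D_s\setminus\{b_s\}$ is ``everything except a small left piece,'' it contains uncountably many endpoints, and the branch that always selects it has $\bigcap_n D_{\sigma|n}$ equal to a nondegenerate subcontinuum. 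Then $e_\sigma$ is undefined, $T$ need not be closed up by adding single points, and $\overline{T}$ is not the Gehman dendrite. Repairing this requires a genuinely new ingredient --- for instance, that for every $\varepsilon>0$ a dendrite admits a finite cut set $F$ such that every component of $D\setminus F$ has diameter less than $\varepsilon$ (a consequence of local connectedness/Property S, or of approximation of dendrites by trees), interleaved with your binary splittings --- so the claim that the fix needs ``no new mathematical input beyond Lemma \ref{cxe}'' is not right. A secondary omission: you assert that the branch points of $G$ are exactly the $b_s$, each of order $3$, but the selected component of $D_s\setminus\{b_s\}$ containing $p_1$ may also contain the entry point $b_{s^-}$ from the parent, in which case $[b_{s^-},b_s]$ and $[b_s,b_{s0}]$ share a subarc, $b_s$ has order $2$ in $G$, and a spurious branch point appears elsewhere; you must arrange the parent direction and the two child directions to lie in three distinct components of $D_s\setminus\{b_s\}$ (e.g., by using a third condensation point or re-selecting $b_s$) before invoking the order-$3$/Cantor-set characterization of the Gehman dendrite.
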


\begin{prop}
Let $X$ be a  dendrite with $E(X)$ uncountable. Then there is a
continuous map $f:~X \to X$ with zero topological entropy having an
uncountable $\omega$-limit set containing a periodic point.
\end{prop}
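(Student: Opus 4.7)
The plan is to transport the zero-entropy shift dynamics of Lemma \ref{sym} onto a copy of the Gehman dendrite embedded in $X$ by Lemma \ref{unc}, and then extend to all of $X$ via a first-point retraction.

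First, apply Lemma \ref{unc} to obtain a Gehman dendrite $G\subset X$ and fix a homeomorphism $\phi:E(G)\to\Sigma_2$ identifying $E(G)$ with the full $2$-shift. Apply Lemma \ref{sym} to obtain a recurrent point $s\in\Sigma_2$ whose $\omega$-limit set $Y:=\omega_\sigma(s)$ is uncountable, contains a $\sigma$-fixed point $\bar{0}$, and satisfies $h(\sigma|_Y)=0$. Set $Y':=\phi^{-1}(Y)\subset E(G)$ and let $D:=[Y']$ be the subdendrite of $G$ generated by $Y'$; by Lemma \ref{hull}(2), $E(D)=Y'$. Using the tree structure of $G$, each finite binary string $w$ labels a cut-point $v_w$, and one defines the natural shift extension $F:G\to G$ by $F(v_w):=v_{\sigma(w)}$ (with $\sigma(\emptyset)=\emptyset$), by $F|_{E(G)}:=\phi^{-1}\circ\sigma\circ\phi$, and by linear interpolation on edges. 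The $\sigma$-invariance $\sigma(Y)=Y$ guarantees $F(D)\subset D$, so we restrict to $F:D\to D$.

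The central technical point is to show $h(F)=0$. The restriction $F|_{Y'}$ is topologically conjugate to $\sigma|_Y$, so $h(F|_{Y'})=0$; the remaining concern is whether the extension across the ``skeleton'' $D\setminus Y'$ introduces additional complexity. The approach is to argue via the arc-horseshoe criterion: were $F$ (or some iterate) to admit an arc horseshoe, the corresponding arcs $I,J\subset D$ with common endpoint and $F^n(I)\cap F^m(J)\supset I\cup J$ would be analyzable through the symbolic addresses of the cut-points they contain, yielding two disjoint cylinders of $Y$ whose $\sigma$-iterates cover each other --- a symbolic horseshoe for $\sigma|_Y$ excluded by $h(\sigma|_Y)=0$. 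Combined with the observation that the $F$-orbit of any cut-point $v_w$ reaches $v_\emptyset$ in $|w|$ steps (since $|\sigma(w)|=|w|-1$) and is then fixed, every $F$-invariant measure on $D$ is supported on $Y'\cup\{v_\emptyset\}$, which forces $h(F)=h(\sigma|_Y)=0$.

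Finally, since $D$ is a subdendrite of $X$, the first-point map $r:X\to D$ is a continuous retraction. Set $f:=F\circ r:X\to X$; then $f$ is continuous, $f(X)\subset D$, and $f^n=F^n$ on $D$ for every $n\geq 1$, so $h(f)=h(F)=0$. Taking $x_0:=\phi^{-1}(s)\in D$, induction gives $f^n(x_0)=\phi^{-1}(\sigma^n(s))$, hence $\omega_f(x_0)=\phi^{-1}(\omega_\sigma(s))=Y'$ is uncountable. Since $\sigma(\bar{0})=\bar{0}$, the point $\phi^{-1}(\bar{0})\in Y'$ is fixed by $F$, hence by $f$, and lies in $\omega_f(x_0)$, providing the desired uncountable $\omega$-limit set containing a periodic point. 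The principal obstacle is the zero-entropy claim for $F$, which requires converting any hypothetical arc horseshoe in the extension into a symbolic horseshoe for $\sigma|_Y$ by a careful reading of the Gehman tree combinatorics.
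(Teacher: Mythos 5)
Your proposal is correct and follows essentially the same route as the paper: embed a Gehman dendrite via Lemma \ref{unc}, equip it with the shift-extension map of \cite{Koc} acting on the endpoints as the zero-entropy subshift $\sigma|_{\omega_\sigma(s)}$ from Lemma \ref{sym}, and extend to all of $X$ by composing with the first-point retraction. Your entropy argument (every non-endpoint is eventually mapped to the fixed vertex, so all invariant measures live on $Y'\cup\{v_\emptyset\}$) is the same mechanism the paper invokes via $h(f)=h(f|_{\overline{R(f)}})$, so no genuinely different approach is involved.
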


\begin{proof} Let $X$ be a dendrite with $E(X)$ uncountable.
If $X$ is a Gehman dendrite.
 The set $E:=E(X)$ is homeomorphic to $\omega_{\sigma}(s)$ where $s$ is
  defined in the Lemma \ref{sym}. Let $f$ be the map defined in \cite{Koc}
  with the same notations such that $f$ act in $E$ as the subshift
  $\sigma_1:=\sigma_{|\omega_{\sigma}(s)}$. We may assume that $E=\omega_{\sigma}(s)$. Any
point $y\in X \backslash E$ is eventually mapped to $c$ so
$\overline{R(f)}\subset E \cup\{c\}$. Since
$h(f)=h(f_{|\overline{R(f)}})$ then $h(f)\leq
max(h(f_{|E}),h(f_{|c}))=h(f_{|E})=h(\sigma_1)=0$ hence $h(f)=0$.
Also we have $\omega_f(s)=\omega_{\sigma}(s)$ is uncountable
containing a periodic point. Generally, by Lemma \ref{unc}, $X$
contains a homeomorphic copy of the Gehman dendrite, $G$. Denote by
 $r_G:~X \to G$  the retraction map. Let $f:~G \to G$ defined above and we set
$g=f\circ r_G:~X \to X$. Then we have $g$ is a dendrite map on $X$,
$h(g)=h(f)$ and there is $x\in G$ such that
$\omega_g(x)=\omega_f(x)$ is uncountable containing a periodic
point. This finish the proof of the Proposition.
\end{proof}

\bigskip

 \section{\textbf{ Decomposition of an uncountable $\omega$-limit
 set.}}

\begin{Ass} \label{L1}
  We assume that  $X$ is a dendrite with $E(X)$  closed and
  $E(X)^{\prime}$  finite.  Let $f:~X\to X$  be a dendrite map
  with zero topological entropy having an uncountable omega limit set
  $L:=\omega_{f}(x)$ $($such map exists by $($\cite{Sm},Theorem 2.7$))$. Denote by
  $M=[L]$ the convex hull of $L$.

  \end{Ass}
\medskip
The aim of this paragraph is to prove the following proposition.
\begin{prop} \label{prop}
Let Assumption \ref{L1} be satisfied. Then
\item there is an integer $n\geq 2$ an a connected subset $S$ of $X$
such that:\\
$(i)$ $S, f(S),\dots, f^{n-1}(S)$ are pairwise disjoint,\\
$(ii)$ $f^{n}(S)=S$,\\
$(iii)$ $L \subset \cup_{i=0}^{n-1}f^{i}(S)$,\\
$(iv)$ $\forall 0\leq i \leq n-1, f( L\cap f^{i}(S))=L\cap
f^{i+1}(S)$.

\end{prop}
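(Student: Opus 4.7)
The strategy is to exhibit a periodic point $c \notin L$ of some iterate $f^r$ that separates $X$ into two components whose intersections with $L$ are cyclically permuted by $f$; the required $S$ will then be the convex hull of one of these pieces.

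\smallskip

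Since $E(X)'$ is finite and $E(X)$ is closed in the separable metric space $X$, the isolated points of $E(X)$ form a countable set, so $E(X)$ itself is countable. By Theorem~A$(2)$, $L \cap P(f) = \emptyset$. Setting $M := [L]$, Theorem~\ref{CH1} gives that $M$ is a subdendrite, and by Lemma~\ref{hull} we have $E(M) \subset L$ closed and countable. Applying Lemma~\ref{Le0} with $Y = M$, one writes $M \setminus (B(X) \cup E(M)) = \bigsqcup_{i} J_i$ as a countable union of open free arcs of $X$. Since $L$ is uncountable while $B(X) \cup E(M)$ is countable, there exists some $J := J_{i_0}$ with $L \cap J$ uncountable.

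\smallskip

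Choose three points $u_1, u_2, u_3 \in L \cap J$ with $u_2$ strictly between $u_1$ and $u_3$ along $J$. Using $L = \omega_f(x)$ to place iterates of $x$ in small disjoint neighborhoods of these points, the three-point argument of Claim~1 in the proof of Theorem~A (via \cite{AKLS}, or Theorem~2.13 of~\cite{MS}) produces a periodic point $c$ of some period $r \geq 1$ in an open subarc of $J$. By Theorem~A$(2)$ we have $c \notin L$, and since $c$ lies in the interior of a free arc, $\mathrm{ord}(c, X) = 2$. Hence $X \setminus \{c\} = U \sqcup V$, and $L = (L \cap U) \sqcup (L \cap V)$ is a non-trivial partition into subsets that are closed in $L$ (using $c \notin L$). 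Let $n \geq 1$ be the smallest integer with $f^n(L \cap U) \cap (L \cap U) \neq \emptyset$. If $f$ were to map two distinct points of $L \cap U$ into different components of $X \setminus \{c\}$, then combining their returns with the uncountability of $L$ on both sides of $c$ would yield an arc horseshoe around $c$ for some iterate of $f$, in direct analogy with Cases~1 and~2 of the proof of Theorem~A$(2)$; the horseshoe theorem of~\cite{KKM} would then force $h(f) > 0$, a contradiction. Forward-invariance $f(L \cap U) \subset L \cap U$ is likewise excluded, since it would eventually trap the orbit of $x$ in a neighborhood of $L \cap U$, contradicting that $L \cap V$ is a non-empty part of $\omega_f(x)$. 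Consequently $n \geq 2$, and the sets $f^i(L \cap U)$ for $0 \leq i < n$ form a cyclic $f$-partition of $L$.

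\smallskip

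Define $S := [L \cap U]$, which is a subdendrite by Lemma~\ref{hull}, hence connected. Properties~(i) and~(ii) follow from the cyclic partition above together with the identity $f(L) = L$ for $\omega$-limit sets; property~(iii) is immediate from $L = \bigsqcup_{i=0}^{n-1} f^i(L \cap U) \subset \bigcup_{i=0}^{n-1} f^i(S)$; and~(iv) follows from the bijective action of $f$ on the $L$-pieces. The main obstacle is the no-horseshoe analysis sketched above: ruling out mixed or invariant behavior of $f$ on $L \cap U$ and $L \cap V$ requires careful arc horseshoe constructions closely paralleling the detailed case analyses of Cases~1 and~2 in the proof of Theorem~A$(2)$.
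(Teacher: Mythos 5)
Your opening reductions (countability of $E(X)$, $L\cap P(f)=\emptyset$ via Theorem A, locating a free arc $J$ with $L\cap J$ uncountable, and producing a periodic point $c\notin L$ inside $J$ by the three-point argument) are sound and consistent with the paper. The gap is in what you do with $c$. The paper does not cut $M=[L]$ at a single periodic point: it cuts along the \emph{full backward orbit} $F_a=\bigcup_{n\geq0}f^{-n}(a)\cap M$ of a fixed point $a\in M$ (or of $z_0=r_M(z)$ for a fixed point $z\notin M$, when $Fix(f)\cap M=\emptyset$). Backward invariance of $F_a$ is exactly what forces $f$ to send each connected component of $M\setminus F_a$ into a single component (Lemma \ref{C}); the components meeting $L$ are finitely many by Lemma \ref{YD} and, being \emph{all} such components, they automatically cover $L$; and Lemma \ref{WI} upgrades the induced permutation to an $n$-cycle. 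Your single cut point $c$ gives only the two-block partition $\{L\cap U,L\cap V\}$, and that partition is in general not respected by $f$: if the true period of the decomposition is $n>2$ (picture $c$ sitting inside one of the $n$ components $C_1,\dots,C_n$ of the paper's construction, splitting $C_1$ into two halves), then $f(L\cap V)$ genuinely meets both $U$ and $V$, your ``no mixing or else horseshoe'' claim fails for $L\cap V$, and the sets $f^i(L\cap U)$, $0\leq i<n$, are neither pairwise disjoint nor do they cover $L$ (they miss the half of $L\cap C_1$ on the far side of $c$). Property $(iii)$ has no source at all in your setup; defining $n$ as a first return time does not produce it.

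The final step is also not routine. From a cyclic partition of $L$ one does not get $(i)$ and $(ii)$ for $S=[L\cap U]$: $f([L\cap U])$ is generally strictly larger than $[f(L\cap U)]$ and may meet $[L\cap U]$, and $f^n([L\cap U])=[L\cap U]$ is false in general (one only has $\supset$). The paper must take the increasing union $S=\bigcup_{k\geq0}f^{kn}([l_1])$ to force $f^n(S)=S$, and proves disjointness of $S,f(S),\dots,f^{n-1}(S)$ by showing each iterate avoids the backward-invariant cut set $F_a$ and invoking maximality of the components $C_i$ --- an argument with no analogue for your $c$, whose preimages in $M$ you do not control. Two further loose ends: your $c$ is only $f^r$-periodic, so at best you would obtain a structure for $f^r$ that still has to be converted into one for $f$; and the case $Fix(f)\cap M=\emptyset$, which the paper handles separately via Lemma \ref{fix} and the retraction point $z_0=r_M(z)$, is not addressed.
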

\medskip

\begin{rem}
In fact, $(iv)$ is an immediate consequence of $(i),(ii)$ and
$(iii)$.

\end{rem}

\begin{rem}
 If $f:~X \to X$ is a tree map with zero topological entropy,
 then any infinite $\omega$-limit set is uncountable.
\end{rem}
\medskip

\begin{lem} \label{YD}
Let assumption \ref{L1} be satisfied. Let $Y$ be a (non degenerate)
subdendrite of $X$ such that $E(Y) \cap E(X)^{\prime}=\emptyset$,
then $Y$ is a tree and $X \backslash Y$ has finitely many connected
components. Furthermore, there is a pairwise disjoint subdendrites
$D_1,D_2,\dots, D_n$ in $X$ such that $X \backslash Y \subset
\cup_{i=1}^{n}D_i; n>0$ and for any $1\leq i \leq n, D_i \cap Y$ is
reduced to one point.
\end{lem}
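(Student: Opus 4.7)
The plan is to establish the three claims of the lemma in order: $Y$ is a tree, $X \setminus Y$ has only finitely many components, and these can be grouped into the required family $\{D_j\}$.

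For the first claim, I would show $E(Y)$ is finite. By Theorem \ref{CH1} applied to the subdendrite $Y$, the set $E(Y)$ is closed in $X$; by Lemma \ref{Y<X}, $E(Y)' \subset E(X)'$. If $p$ were an accumulation point of $E(Y)$, closedness would force $p \in E(Y)$ and the inclusion would force $p \in E(X)'$, contradicting the hypothesis $E(Y) \cap E(X)' = \emptyset$. Hence $E(Y)$ is a closed discrete subset of the compact space $X$, so it is finite and $Y$ is a tree.

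The pivotal intermediate fact for the second claim is that for every component $C$ of $X \setminus Y$, the closure $\overline{C}$ is a subdendrite of $X$ meeting $Y$ in exactly one point $y_C$. Since $Y$ is closed and $X$ is locally connected, $C$ is open in $X$, so $\overline{C}$ is a subcontinuum, hence a subdendrite. Applied to the disjoint connected sets $C$ and $Y$, Lemma \ref{most} forces $\overline{C} \cap \overline{Y} = \overline{C} \cap Y$ to contain at most one point; this intersection is nonempty because otherwise $\overline{C}$ would be a closed connected subset of $X \setminus Y$ containing $C$, hence equal to $C$ by maximality, making $C$ clopen in the connected space $X$, which is impossible since $Y \neq \emptyset$. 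Each attaching point $y_C$ must satisfy $ord(y_C, X) > ord(y_C, Y)$; if $y_C$ is not an endpoint of $Y$, then $ord(y_C, Y) \geq 2$, so $ord(y_C, X) \geq 3$ and $y_C \in B(X)$. Thus the attaching points lie in $E(Y) \cup (B(X) \cap Y)$: $E(Y)$ is finite by Step 1, and by Proposition \ref{CH4} $B(X)$ is discrete in $X$, so $B(X) \cap Y$ is a discrete closed subset of compact $Y$ and is finite. At each attaching point $p$, the number of components of $X \setminus Y$ with $y_C = p$ equals $ord(p, X) - ord(p, Y)$, finite by Lemma \ref{fo}. Hence $X \setminus Y$ has only finitely many components.

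For the third claim, let $p_1, \dots, p_n$ be the distinct attaching points and define
\[
D_j := \bigcup_{C\,:\, y_C = p_j} \overline{C}.
\]
Each $D_j$ is a finite union of subdendrites all passing through $p_j$, and is therefore itself a subdendrite of $X$. By the single-point property, $D_j \cap Y = \{p_j\}$, and $\bigcup_j D_j \supset X \setminus Y$ by construction. For disjointness when $j \neq k$, note that since each component of the open set $X \setminus Y$ is open in $X$, any point of $\overline{C} \setminus C$ lies in $Y$, whence $\overline{C} \cap \overline{C'} \subset Y$ for any two components $C, C'$; the single-point property then yields $\overline{C} \cap \overline{C'} \subset \{y_C\} \cap \{y_{C'}\} = \emptyset$ whenever $y_C \neq y_{C'}$, so $D_j \cap D_k = \emptyset$.

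The main obstacle I anticipate is the clean establishment of the single-point intersection $\overline{C} \cap Y = \{y_C\}$, which is the linchpin of both the component count and the disjointness of the $D_j$'s; it rests on Lemma \ref{most} together with the connectedness argument for nonemptiness. After that, the order-theoretic finiteness at attaching points (via Proposition \ref{CH4} and Lemma \ref{fo}) and the assembly of the $D_j$'s are routine. Note that the finiteness of $E(X)'$ from Assumption \ref{L1} is not actually required for this lemma; only $E(X)$ closed and the hypothesis $E(Y) \cap E(X)' = \emptyset$ enter the argument.
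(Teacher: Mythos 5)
Your overall architecture is sound and, for the central claim that $X\setminus Y$ has finitely many components, genuinely different from the paper's. The paper argues by contradiction: given infinitely many components $C_n$, it picks $e_n\in \overline{C_n}\cap Y$ and an endpoint $a_n\in E(\overline{C_n})\setminus\{e_n\}\subset E(X)$, applies Lemma \ref{cxe} to the pairwise disjoint connected sets $(e_n,a_n]$ to force $d(a_n,e_n)\to 0$, and uses compactness of the closed set $E(X)$ to produce a limit point lying in $E(Y)\cap E(X)^{\prime}$, a contradiction. Your route --- classifying the attaching points into $E(Y)\cup(B(X)\cap Y)$ and bounding the number of components attached at each point $p$ by $ord(p,X)-ord(p,Y)$ --- is more constructive and yields the same conclusion; your treatment of the single-point intersection $\overline{C}\cap Y=\{y_C\}$ via Lemma \ref{most} and your assembly and disjointness of the $D_j$ match the paper's in substance.

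There is, however, one genuine gap: the finiteness of $B(X)\cap Y$. You infer it from ``$B(X)$ is discrete, so $B(X)\cap Y$ is a discrete \emph{closed} subset of compact $Y$.'' Discreteness does not imply closedness, and $B(X)$ is in general not closed even when $E(X)$ is closed: the paper's own Example 1 has branch points accumulating at the endpoint $e$, and Proposition \ref{CH2} only asserts $\overline{B(X)}\subset B(X)\cup E(X)$. So a priori $B(X)\cap Y$ could be an infinite discrete set accumulating at a point of $E(X)\cap Y$, and your component count would not close. The claim you need is nevertheless true, but it uses the hypothesis $E(Y)\cap E(X)^{\prime}=\emptyset$ in an essential way: by Propositions \ref{CH2} and \ref{CH4} every accumulation point of $B(X)$ lies in $E(X)$, by Lemma \ref{de} it cannot be an isolated endpoint, hence it lies in $E(X)^{\prime}$; an accumulation point of $B(X)\cap Y$ lies in the closed set $Y$, and any point of $E(X)\cap Y$ has order $1$ in $Y$ and hence belongs to $E(Y)$; such a point would therefore lie in $E(Y)\cap E(X)^{\prime}=\emptyset$. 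With that repair your argument is complete. Your closing remark that the finiteness of $E(X)^{\prime}$ is not needed for this lemma is correct and consistent with the paper's proof, which uses only the closedness of $E(X)$ and the hypothesis $E(Y)\cap E(X)^{\prime}=\emptyset$.
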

\medskip

\begin{proof}
Suppose contrarily that $X \backslash Y$ has infinitely many
connected components denoted by $(C_n)_{n\geq 1}$. For any $n\geq
1$, let $e_n \in E(\overline{C_n})\cap Y$ and  $a_n \in
E(\overline{C_n})\backslash \{e_n\} \subset E(X)$. For any $n\geq 1,
(e_n,a_n] \subset C_n$ so $((e_n,a_n])_{n\geq 1}$ is a pairwise
disjoint connected subsets in $X$. By Lemma \ref{cxe}, $\lim_{n \to
+\infty}$diam$((e_n,a_n])=0$ hence $\lim_{n\to
+\infty}d(a_n,e_n)=0$. Since  $E(X)$ is  closed, we may assume that
the sequence $(a_n)_{n\geq 1}$ converges to a point $a\in
E(X)^{\prime} $  hence $(e_n)_{n\geq 1}$ converges to $a\in E(Y)$.
Since $E(Y)$ is closed then $a\in E(Y) \cap E(X)^{\prime}$, absurd.
Let prove that $Y$ is a tree. By Lemma \ref{Y<X}, we have
$E(Y)^{\prime}\subset E(X)^{\prime}\cap E(Y)$ then
$E(Y)^{\prime}=\emptyset$, since $E(Y)$ is closed then $E(Y)$ is
finite. Hence $Y$ is a tree.
 Now, denote by $C_1, C_2,\dots, C_k; k>0$ the connected components
 of $X \backslash Y$. For any $1\leq n \leq k$; denote by $\{e_n\}=Y \cap
 \overline{C_n}$. For any $ n,m \in \{1,2,\dots,k\}$, $n \sim m\Leftrightarrow
 e_n=e_m$. For any $1\leq n \leq k$, let $D_n:=\cup_{m\sim
 n}\overline{C_m}$. Then we obtain a pairwise disjoint subdendrites $D_1,D_2, \dots, D_s; s\geq 1$ satisfying the conditions of the
 Lemma.
\end{proof}
\medskip

\emph{Proof of Proposition \ref{prop}.}
We distinguish two cases.\\
\medskip

\textbf{Case 1:  $M\cap Fix(f)\neq \emptyset$.}

 Let $a\in Fix(f)\cap M$. Denote by
$F_a =\cup_{n=0}^{+\infty}f^{-n}(a) \cap M$ and
$Y:=[\overline{F_a}]$.
 We remark that by Theorem A, $a\in M\backslash L \subset M\backslash E(M)$, hence
$2\leq ord(a,M)$.

\begin{lem} \label{L3}
Let Assumption \ref{L1} be satisfied. Then $L \subset M \backslash
Y$.
\end{lem}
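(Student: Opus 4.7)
The plan is to argue by contradiction: assume some $y\in L\cap Y$ exists and derive $h(f)>0$, contradicting the zero-entropy assumption in \ref{L1}. Two preliminary reductions come for free from Theorem~A. Since $L$ is uncountable and $E(X)$ is countable (being closed with finite derived set), Theorem~A(2) gives $L\cap P(f)=\emptyset$; in particular $a\notin L$, and moreover $F_a\cap L=\emptyset$, because any $z\in F_a$ with $f^{k}(z)=a$ would force $a=f^{k}(z)\in L$ by $f$-invariance of $L$. Consequently, the hypothetical point $y$ lies in $Y\setminus F_a$, and applying Lemma~\ref{hull}(1) to $\overline{F_a}$ (which contains $a$) writes $y\in[a,z]\setminus\{a\}$ for some $z\in\overline{F_a}$. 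This splits into two subcases: either $y=z\in\overline{F_a}\setminus F_a$, so $y$ is an accumulation of true pre-images of $a$, or $y$ lies strictly inside the open arc $(a,z)$.

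From here I would build an arc horseshoe for a suitable iterate of $f$, then invoke the dendrite horseshoe theorem (\cite{KKM}, Theorem~2) to obtain $h(f)>0$. The key observation is that $L$ has no isolated points, since isolated points of an $\omega$-limit set are periodic (\cite{BC}, Lemma~4, already used in the proof of Theorem~A) and $L\cap P(f)=\emptyset$. Hence iterates $f^{n}(x)\in L$ cluster at $y$ from several sides; meanwhile pre-images $z_\ell\in F_a$ of $a$ cluster either at $z$ (when $y\in(a,z)$) or at $y$ itself (when $y=z$). After restricting to a free-arc component $C_j\ni y$ provided by Lemma~\ref{Le0} (using $E(X)^{\prime}$ finite, $E(X)$ closed) inside the branch of $a$, the plan is to pick three orbit points $f^{n_1}(x),f^{n_2}(x),f^{n_3}(x)$ on one side of $y$ with $f^{n_2}(x)\in(f^{n_1}(x),f^{n_3}(x))$ and a deep pre-image $z_\ell$ on the other, then identify two arcs $I,J$ sharing exactly one endpoint such that $f^{p}(I)\supset I\cup J$ and $f^{q}(J)\supset I\cup J$ for appropriate $p,q$; the pull-back of $z_\ell$ to $a=f^{k_\ell}(z_\ell)$ is precisely what supplies the needed overlap.

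The main obstacle will be the case bookkeeping. Both subcases $y=z$ and $y\in(a,z)$ need the same conclusion but a different choice of pull-back iterate and of approximating orbit points, and in the first subcase the pre-images $z_\ell$ have unbounded depths $k_\ell$, so I must use Lemma~\ref{dist} and Corollary~\ref{J} to control the diameters of the relevant arcs and guarantee that forward iterates of small arcs around $f^{n_i}(x)$ actually contain the intended pre-image/orbit configuration; Lemma~\ref{cxe} prevents pathological accumulations of disjoint connected sets along the way. The whole argument mirrors in spirit Claims~1--3 and Cases~1--2 of the proof of Theorem~A, from which it inherits its structure. Once the horseshoe is produced, the contradiction $h(f)>0$ closes the argument and yields $L\cap Y=\emptyset$, i.e., $L\subset M\setminus Y$.
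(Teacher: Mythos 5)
Your overall strategy---contradiction via an arc horseshoe, the preliminary reductions $L\cap P(f)=\emptyset$, $a\notin L$, $F_a\cap L=\emptyset$, and the split of a hypothetical $y\in L\cap Y$ into $y\in(a,z)$ with $z\in F_a$ versus $y\in\overline{F_a}\setminus F_a$---is exactly the skeleton of the paper's proof (its Claims 1 and 2), and your endgame via Lemma~\ref{hull} is fine. But the proposal stops precisely where the work begins: the horseshoe is never constructed, and the configuration you gesture at (three orbit points of $x$ on one side of $y$, a deep pre-image of $a$ on the other) is not the one that works. The paper's Claim 1 takes $z\in f^{-p}(a)\cap M$ and $y\in L\cap[a,z]$ and sets $I=[a,y]$, $J=[y,z]$; the whole point is that $f^{p}$ sends \emph{both} endpoints $a$ and $z$ to the fixed point $a$, so $f^{p}(I)$ and $f^{p}(J)$ each contain $[a,f^{p}(y)]$, and it only remains to show that a further iterate of $[a,f^{p}(y)]$ covers all of $[a,z]=I\cup J$. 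That last step is supplied by Lemma~\ref{Le1} (for any $y\in L$ some $[a,f^{p}(y)]$ contains $[a,f^{k}(x)]$, and the orbit of $x$ accumulates on a point $w\in L$ with $z\in(y,w)$, whence $[a,f^{i+r}(x)]\supset[a,z]$). You never invoke Lemma~\ref{Le1} or any substitute, and without it there is no reason why forward images of your arcs should ever stretch past $z$: the facts that $L$ has no isolated points and that orbit points cluster at $y$ do not by themselves yield the covering relations $f^{p}(I)\supset I\cup J$ and $f^{q}(J)\supset I\cup J$.

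The second subcase is also not handled by what you describe. For $y\in L\cap\overline{F_a}$ the paper does not control diameters of arcs to pre-images of unbounded depth (your proposed use of Lemmas~\ref{dist}, \ref{cxe} and Corollary~\ref{J}); instead it first shows that the orbit of $y$ must leave $\overline{F_a}$---otherwise $\omega_f(y)\subset\overline{F_a}$ would be uncountable, hence would meet a free arc from Lemma~\ref{Le0} in two points, producing a point of $L$ on some $[a,z]$ with $z\in F_a$ and contradicting Claim 1---and then gets a contradiction from continuity: an open set $U\ni y$ with $f^{n_0}(U)\cap\overline{F_a}=\emptyset$ still contains points of $F_a$, whose $f^{n_0}$-images remain in $F_a$. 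Your sketch contains neither this argument nor a workable replacement, so as written the proof of the lemma is missing its two essential steps.
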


\begin{proof}
\textbf{Claim 1.} For any  $z\in F_a$, we have $[a,z]\cap L=\emptyset$.\\

 Suppose contrary that $[a,z] \cap L \neq \emptyset$ for some
$z\in f^{-p}(a) \cap M; p\geq 1$.  Let $y$ be such a point in the
intersection above.
 Denote by $I=[a,y], J=[y,z]$ then we have $
f^{p}(I)\cap f^{p}(J)\supset [a,f^{p}(y)]$. By  Lemma \ref{Le1},
there is $i, j \geq 0$ such that $ [a,f^{p+j}(y)] \supset
[a,f^{i}(x)]$ so we have $f^{p+j}(I) \cap f^{p+j}(J) \supset
[a,f^{i}(x)]$. Let $w\in L$ such that $z \in (y,w)$ then there is
$r\geq0$ such that $[a,z] \subset [a,f^{i+r}(x)]$. Then
$f^{p+j+r}(I)\cap f^{p+j+r}(J)\supset [a,f^{i+r}(x)] \supset [a,z]=I
\cup J$. Then  $I,J$ form an arc horseshoe for $f$, so $h(f)>0$,
absurd. This finish the proof Claim $(1)$.

\bigskip

\textbf{Claim 2.} $L\cap \overline{F_{a}}=\emptyset$.\\

\item  Suppose that there is $y \in L \cap \overline{F_a}$. First, we
will prove that $f^{n}(y) \notin \overline{F_a}$ for some $n\geq 0$.
Suppose contrary that $O_{f}(y) \subset \overline{F_a}$, then we
have $\omega_{f}(y)\subset \overline{F_a}$, since
$\omega_{f}(y)\subset L$ then it contain no periodic point, so
$\omega_f (y)$ is uncountable. By Lemma \ref{Le0}, there is an open
free arc $I$ in $X$ such that $card(\omega_f (y) \cap I)> 2$. Let
$z_1, z_2 $ two distinct points in $\omega_f (y) \cap I$ such that
$z_1 \in (a,z_2)$ and $I_1, I_2$ two open disjoint arc in $I$
containing $z_1, z_2$, respectively.
 Then there is $k, p\geq 0$ such that $f^{k}(y) \in I_1$ and $f^{p}(y)\in I_2$.
 Since $I_2$ is open
 containing $f^p (y) \in \overline{F_a}$, then there is a point  $z\in F_a \cap I_2$,
 hence $f^{k}(y)\in L \cap[a,z]$,
 this contradict Claim $(1)$.
  Now, fix $n_0\geq 1$ such that $f^{n_0}(y)
\notin \overline{F_a}$. Since $X \backslash \overline{F_a}$ is a non
empty open subset in $X$ and since $f^{n_0}$ is continuous, there is
an open subset $U$ of $X$ containing $y$ such that $f^{n_0}(U)
\subset X \backslash \overline{F_a}$, let $t\in F_a \cap U$, then
$f^{n_0}(t) \in f^{n_0}(U)\subset X \backslash \overline{F_a}$, but
$f^{n_0}(t) \in F_a$, absurd. This finish the proof of Claim $(2)$.

By  Claims $(1)$ and $(2)$, we have
 for any $z\in \overline{F_a}$, $[a,z] \cap L=\emptyset$,
 so we have $L\subset M \backslash \cup_{z\in
\overline{F_a}}[a,z]$, hence by Lemma \ref{hull} we obtain $L
\subset M \backslash Y$. Since $E(Y)\subset \overline{F_a}$,
$E(M)^{\prime}\subset E(M) \subset L$ then $E(Y)\cap
E(M)^{\prime}=\emptyset$
    then by Lemma \ref{YD}, $Y$
    is a tree and  $M \backslash
    Y$ has finitely many connected components.
\end{proof}

\begin{lem} \label{C}
Let Assumption \ref{L1} be satisfied. In $(1)$ we suppose that $F_a
\neq \{a\}$, then
\begin{enumerate}
\item for any $y\in L, (a,y] \cap F_a \neq \emptyset$,
\item $M\backslash F_a$ has finitely many connected components intersecting $L$, we denote it by $C_1, C_2,
    \dots, C_n;$ $ n\geq 2$ and for any $k\in \{1,2,\dots,n\}$,
    we denote by  $l_k= L \cap C_k$,
\item for any $1\leq k \leq n$, there is a unique $j:=\sigma(k)\in \{1,2,\dots,n\}$
such that $f(C_k) \cap C_j \neq \emptyset$,
\item for any $1\leq k \leq n$, $f(l_k)=l_{\sigma(k)}$,
\item for any $1\leq k \leq n$, $l_k$ is a clopen set relatively to
$L$,
\item $\sigma$ is an $n$-cycle.

\end{enumerate}
\end{lem}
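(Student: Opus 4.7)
My plan is to prove the six assertions in an order respecting their logical dependencies: first (1), which is a geometric statement about how $F_a$ sits in $M$; then (2), setting up the finite decomposition; then (3), which defines $\sigma$; then (5), giving the clopen structure; then (4), which upgrades inclusion to equality via $f(L)=L$; and finally (6), using Lemma \ref{WIbis}. The two main external tools are the arc-horseshoe criterion (which would force $h(f)>0$) and the backward invariance $f^{-1}(F_a)\cap M\subset F_a$, the latter following immediately from the definition of $F_a$.

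For (1), I would argue by contradiction in the spirit of Claim 1 in the proof of Lemma \ref{L3}. Assume some $y_0\in L$ satisfies $(a,y_0]\cap F_a=\emptyset$ and pick $z\in F_a\setminus\{a\}$ with $f^p(z)=a$. By Claim 1 of Lemma \ref{L3}, $[a,z]\cap L=\emptyset$, so $y_0\notin[a,z]$; combined with the hypothesis on $y_0$, the two arcs $[a,y_0]$ and $[a,z]$ share at most a short initial segment disjoint from $F_a\setminus\{a\}$. Using $y_0\in\omega_f(x)$ together with Corollary \ref{J}, I would arrange iterates $f^n(x)$ close to $y_0$ and combine $f$-iterates of $[a,z]$ (which $f^p$ collapses to a connected set containing $a$) with a subarc of $[a,y_0]$ to produce an arc horseshoe, contradicting $h(f)=0$. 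I expect this step to be the main obstacle, as the case analysis for producing the horseshoe has to be adapted carefully from Claim 1 of Lemma \ref{L3} and from the proof of Theorem A.

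For (2), Lemma \ref{L3} gives $L\cap\overline{F_a}=\emptyset$, so $L\subset M\setminus F_a$. Finiteness of $n$ follows from Lemma \ref{YD}, which shows $M\setminus Y$ has only finitely many components, each $C_k$ meeting $L$ containing at least one of them. For $n\geq 2$, pick $z\in F_a\setminus\{a\}$: since $E(M)\subset L$ by Lemma \ref{hull}(2) and $L\cap F_a=\emptyset$, $z\notin E(M)$, so $ord(z,M)\geq 2$; each component of $M\setminus\{z\}$ contains a point of $L$ (because $M=[L]$), and two such points in different components are separated by $z\in F_a$, hence lie in different components of $M\setminus F_a$. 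For (3), the backward invariance gives $f(C_k)\cap F_a=\emptyset$; if $f(C_k)$ met both $C_{j_1}$ and $C_{j_2}$ with $j_1\neq j_2$, the unique arc in $X$ between representative points would lie in $M$ (since $M$ is a subdendrite) and hence in the arcwise connected set $f(C_k)$, but would pass through $F_a$, a contradiction. Existence of $\sigma(k)$ follows from $f(l_k)\subset L\subset\bigsqcup_j C_j$.

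For (5), each $y\in l_k$ lies in a component $C'$ of the open set $M\setminus\overline{F_a}$; since $C'$ is connected and disjoint from $F_a$, $C'\subset C_k$, so $C'\cap L$ is an open neighborhood of $y$ in $L$ contained in $l_k$, showing $l_k$ is open in $L$; closedness then follows since the $l_k$ partition $L$. For (4), $f(L)=L$ gives $L=\bigcup_k f(l_k)\subset\bigcup_k l_{\sigma(k)}\subset L$, forcing $\sigma$ to be surjective and hence bijective by finiteness, and disjointness of $\{l_j\}$ upgrades $f(l_k)\subset l_{\sigma(k)}$ to equality. Finally for (6), letting $p$ be the period of the $\sigma$-orbit of $1$, the set $G=\bigcup_{i=0}^{p-1}l_{\sigma^i(1)}$ is clopen in $L$ by (5) and $f$-invariant by (4); since $\overline{G}=G$, Lemma \ref{WIbis} forces $G=L$, so $p=n$ and $\sigma$ is an $n$-cycle.
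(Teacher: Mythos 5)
Your items (2)--(6) are essentially correct and follow the same route as the paper: finiteness of the decomposition via Lemma \ref{YD} (each $C_k$ meeting $L$ contains a component of $M\setminus Y$), uniqueness of $\sigma(k)$ from the arcwise connectedness of $f(C_k)$ together with $f(C_k)\cap F_a=\emptyset$, the counting argument with $f(L)=L$ for (4), and Lemma \ref{WIbis} for (6). Your argument for (5) (components of the open set $M\setminus\overline{F_a}$ are open because $M$ is locally connected, and each such component lies in a single $C_k$) is a slightly cleaner variant of the paper's, which instead invokes Lemma \ref{most} to show each $l_i$ is closed; both are valid, and yours correctly rests on $L\cap\overline{F_a}=\emptyset$ from Lemma \ref{L3}.

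The genuine gap is in item (1), precisely the step you flag as the main obstacle. Your plan --- assume $(a,y_0]\cap F_a=\emptyset$ and manufacture an arc horseshoe --- is left unconstructed, and the negated hypothesis does not by itself supply two arcs with the covering relations $f^n(I)\cap f^m(J)\supset I\cup J$ that a horseshoe requires; knowing only that $f^p([a,z])$ is ``a connected set containing $a$'' gives no lower bound on what it covers. The paper's proof of (1) is direct, not by contradiction, and hinges on Lemma \ref{Le1}: for every $y\in L$ there are $p,k\geq 0$ with $[a,f^{k}(x)]\subset[a,f^{p}(y)]\subset f^{p}([a,y])$. Choosing $w\in L$ with $z\in(a,w)$ and then $i$ with $[a,z]\subset[a,f^{k+i}(x)]$ (possible because $f^{k+i}(x)$ accumulates on $w$, via Corollary \ref{J}), one gets $z\in f^{p+i}([a,y])$, hence a preimage $z_{-1}\in[a,y]$ of $z$ under $f^{p+i}$; since $f^{p+i}(a)=a\neq z$ this preimage lies in $(a,y]$, and it belongs to $F_a$ because $z$ does. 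Lemma \ref{Le1} (available here because Theorem A gives $L\cap P(f)=\emptyset$) is the indispensable input you never invoke for this part; without it, or some equivalent statement guaranteeing that $f^{m}([a,y])$ eventually covers $[a,z]$, I do not see how your sketch for (1) can be completed.
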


\begin{proof}

$(1)$ let $y\in L$ arbitrarily, $z\in F_a, z\neq a$ and $w\in L$
such that $z\in (a,w)$. By Lemma \ref{Le1}, there is $p, k\geq 0$
such that $[a,f^{k}(x)]\subset [a,f^p (y)]\subset f^p ([a,y])$. Let
$i\geq 0$ be such that $[a,z]\subset [a,f^{k+i}(x)]$ then
$[a,z]\subset f^{i}([a,f^{k}(x)])\subset f^{p+i}([a,y])$ so $z\in
f^{p+i}([a,y])$ hence there is $z_{-1}\in [a,y]$ such that
$f^{p+i}(z_{-1})=z\in F_a$ with $z_{-1}\neq a$.   So $ z_{-1}\in (a,y] \cap F_a$.\\

$(2)$ If $F_a \neq \{a\}$.  We have $F_a \subset Y$ so a connected
subset of $M\backslash Y$ is also a connected subset of $M\backslash
F_a$, since $L \subset M \backslash Y$ and by Lemma \ref{YD}, $M
\backslash Y$ has finitely many connected components so the
connected subsets of $M \backslash F_a$ intersecting $L$ are finite.
If $F_a =\{a\}$, since $ord(a,M)<+\infty$ then $M\backslash \{a\} $
has finitely many connected components, (exactly $n:=ord(a,M)$
components), and $L
\subset M \backslash \{a\}$.\\

$(3)$ Let $1\leq k \leq n$, since $f(l_k) \subset f(C_k)$  and
$f(l_k)\subset f(L)=\cup_{i=1}^n l_i
  $ so there is $1\leq j \leq n$ such that $f(l_k)\cap l_j\neq \emptyset$ hence $f(C_k) \cap C_j\neq
  \emptyset$. Suppose that there is $1\leq i\neq j \leq n$ such that
  $f(C_k) \cap C_i \neq \emptyset \neq f(C_k) \cap C_j$. Let
  $u\in f(C_k)\cap C_i, v\in f(C_k)\cap C_j$, then the subset
  $H:=C_i \cup [u,v] \cup C_j$ is connected included in $M$ since $[u,v]\subset M$.
   Since  $f(C_k)\cap
  F_a=\emptyset$ for any $1\leq k \leq n$ then $H \cap F_a=\emptyset$. By maximality of $C_i$
  and $C_j$ we have $H \subset C_i$ and $H \subset C_j$ hence $H=C_i
  =C_j$, absurd.\\

  $(4)$ Let $1\leq k \leq n$, for any $i\neq \sigma(k), f(C_k) \cap C_i=\emptyset$
then $f(l_k) \cap l_i=\emptyset$, since $f(l_k)\subset
L=\cup_{r=1}^{n} l_r$ hence $f(l_k)\subset l_{\sigma(k)}$. Suppose
 contrarily that $f(l_k)\varsubsetneq l_{\sigma(k)}$, since $f(L)=L$ then
 there is $i\neq k$ such that $f(l_i)\cap l_{\sigma(k)} \neq \emptyset $ hence
   so $\sigma(i)=\sigma(k)$. We obtain $f(L)=\cup_{r=1}^n f(l_r)\subset
 \cup_{r=1}^n l_{\sigma(r)}=\cup_{r=1;r\neq i}^n l_{\sigma(r)} \varsubsetneq L$, absurd.
 Then $f(l_k)= l_{\sigma(k)}$.\\

$(5)$ Suppose that for some $1\leq i\leq n$, $l_i\varsubsetneq
\overline{l_i}$. Then there is $j\in \{1,2,\dots,n\}; j\neq i$ such
that $ \overline{l_i} \cap l_j \neq \emptyset$ hence
    $\overline{C_i} \cap C_j \neq \emptyset$. Since $C_i$ and $C_j$ are disjoint then by Lemma \ref{most}
  there is $z\in M$ such that $\overline{C_i} \cap C_j = \{z\}$. Since $C_i \subset (C_i \cup \{z\}) \subset \overline{C_i}$ then $C_i \cup \{z \}$ is connected, hence $C_i \cup \{ z\} \cup C_j = C_i \cup C_j$ is connected disjoint with $F_a$, this contradict the maximality of $C_i$ and $C_j$.  Then $l_i= \overline{l_i}$ i. e. any $l_i$ is closed in $X$ hence in $L$. In the other hand, since $l_1,l_2,\dots,l_n$ are pairwise disjoint then
$l_i=L \backslash \cup_{j=1;j\neq i}^n l_j$ is open relatively to
$L$. We conclude that any $l_i$ is a clopen set relatively to $L$.\\

$(6)$ Let prove that $\sigma$ is $n$-cycle.  Suppose contrarily that
for some $1\leq s \leq n$, $\sigma^{p}(s)=s$ with $0<p<n$. The
subset $F:=\cup_{k=0}^{p-1}l_{\sigma^k (s)}$ is proper and clopen
relatively to $L$. We have $f(F)=\cup_{k=0}^{p-1}f(l_{\sigma^k
(s)})=\cup_{k=0}^{p-1}l_{\sigma^{k+1}(s)}=F$. Let $G=L \backslash F$
a non empty closed subset in $L$, $\overline{f(L\backslash G)} \cap
G= \overline{f(F)}\cap G=F\cap G=\emptyset$. This contradict Lemma
\ref{WI}. Hence $\forall s \in \{1,2,\dots,n\}$,
$O_{\sigma}(s)=\{1,2,\dots,n\}$ then $\sigma$ is an $n$-cycle.

\end{proof}

\begin{lem} \label{S}
 Denote by $s_1:=[l_1]\subset C_1$. The subset
$S:=\cup_{k=0}^{+\infty}f^{kn}(s_1)$ satisfy conditions $(i)-(iv)$
of the Proposition \ref{prop}.
\end{lem}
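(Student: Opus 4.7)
The plan is to verify the four conditions of Proposition \ref{prop} in turn, with (i) being the substantive step. For (iii), since $\sigma$ is an $n$-cycle by Lemma \ref{C}(6), the iterates $\sigma^i(1)$, $0\leq i\leq n-1$, exhaust $\{1,\dots,n\}$; since $l_1\subset s_1$ and $f(l_k)=l_{\sigma(k)}$ (Lemma \ref{C}(4)), we get $l_{\sigma^i(1)}=f^i(l_1)\subset f^i(S)$, hence $L=\bigcup_{k=1}^n l_k\subset \bigcup_{i=0}^{n-1}f^i(S)$. For (ii), $f^n(S)\subset S$ is immediate from the definition; for the reverse, $f^n(l_1)=l_1$, and since $f^n(s_1)$ is a subcontinuum (hence a subdendrite by \cite{Nad}, Theorem $10.10$) containing $l_1$, the minimality of $[l_1]$ gives $s_1\subset f^n(s_1)$. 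Iterating, $\{f^{kn}(s_1)\}_{k\geq 0}$ is an increasing family, so $S\subset f^n(S)$.

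For the main step (i), I first note that $s_1\subset C_1$: by Lemma \ref{hull}(1), fixing any $a_0\in l_1$ gives $s_1=\bigcup_{z\in l_1}[a_0,z]$, and each arc $[a_0,z]$ lies in $C_1$ because $C_1$ is arcwise connected and arcs in $X$ are unique. In particular $s_1\subset M$ and $s_1\cap F_a=\emptyset$. The crucial claim is that $f^i(s_1)\cap F_a=\emptyset$ for every $i\geq 0$: if $z\in s_1$ satisfied $f^i(z)\in F_a$, then $f^{i+j}(z)=a$ for some $j\geq 0$; combined with $z\in s_1\subset M$, this would force $z\in F_a$, contradicting $s_1\cap F_a=\emptyset$.

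Consequently $f^i(s_1)$ is connected and disjoint from $F_a$, and since it contains $l_{\sigma^i(1)}\subset C_{\sigma^i(1)}$, it lies in the unique connected component $\tilde C_{\sigma^i(1)}$ of $X\setminus F_a$ containing $C_{\sigma^i(1)}$; hence $f^i(S)\subset \tilde C_{\sigma^i(1)}$. For distinct $k,j\in\{1,\dots,n\}$, the components $\tilde C_k,\tilde C_j$ are distinct, since $\tilde C_k\cap M$ is connected (intersection of two connected subsets of the dendrite $X$, by \cite{Nad}, Theorem $10.10$) and hence equal to $C_k$. As $\sigma$ is an $n$-cycle, the sets $\tilde C_{\sigma^i(1)}$ for $0\leq i\leq n-1$ are therefore pairwise disjoint, proving (i). Condition (iv) then follows formally from (i)--(iii) as observed in the remark: $f(L\cap f^i(S))\subset L\cap f^{i+1}(S)$ by direct inclusion, and the disjoint partition of $L=f(L)$ into the pieces $L\cap f^i(S)$ forces equality. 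The principal obstacle is the key identity $f^i(s_1)\cap F_a=\emptyset$, which relies essentially on $s_1\subset M$ together with the explicit form $F_a=\bigcup_{n\geq 0}f^{-n}(a)\cap M$.
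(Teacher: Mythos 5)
Your proof is correct and follows essentially the same route as the paper: the key identity $f^{i}(s_1)\cap F_a=\emptyset$ (from $s_1\subset M$ and the definition of $F_a$), the increasing family $f^{kn}(s_1)$ obtained from the minimality of the convex hull $[l_1]$ inside the subdendrite $f^{n}(s_1)$, and disjointness via the maximality of the components $C_k$. The only (cosmetic) difference is in $(i)$: you place each $f^{i}(S)$ in a component of $X\setminus F_a$ and check these restrict to the $C_k$ on $M$, whereas the paper joins points of $l_{\sigma^i(1)}$ and $l_{\sigma^j(1)}$ by an arc in $M\setminus F_a$ to contradict maximality directly; both handle the fact that $f^{i}(S)$ need not lie in $M$.
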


\begin{proof}

 First, $S$ is connected. Indeed, since $s_1$ is a subdendrite of $X$ then it is so for
$f^{kn}(s_1), \forall k\geq 0$. We have $f^{n}(s_1)\supset f^n
(l_1)=l_{\sigma^n(1)}=l_1$ hence $f^n (s_1) \supset s_1$, so
$(f^{kn}(s_1))_{k\geq 0}$ is an increasing sequence of  connected
subsets then $S$ is connected.

$(i)$ $S, f(S),\dots,f^{n-1}(S)$ are pairwise disjoint. Indeed,
since $f^{-1}(F_a)=F_a$ and $s_1 \cap F_a=\emptyset$ then
$f^{i}(s_1)\cap F_a=\emptyset , \forall i\geq 0$ hence $f^{k}(S)
\cap F_a=\emptyset ; \forall 0\leq k \leq n-1$. Suppose that
$f^{i}(S)\cap f^{j}(S)\neq \emptyset$ for some $0\leq i \neq j \leq
n-1$ then $f^{i}(S)\cup f^{j}(S)$ is connected.  Let $u\in
l_{\sigma^i(1)}, v\in l_{\sigma^j(1)}$, since $f^k (S)\supset
l_{\sigma^k(1)}$ for any $k\geq0$ then $[u,v]\subset (f^{i}(S) \cup
f^{j}(S)) \cap M$ . The subset $K:= C_{\sigma^i (1)}\cup [u,v]\cup
C_{\sigma^j (1)}$ is connected in $M$ disjoint with $F_a$, by
maximality of $C_{\sigma^i (1)}$ and $C_{\sigma^j (1)}$ we obtain
$K\subset C_{\sigma^i (1)}$ and $K \subset C_{\sigma^j (1)}$ hence
$C_{\sigma^i (1)}=C_{\sigma^j (1)}$, absurd.\\
$(ii)$  $f^{n}(S)=\cup_{k=0}^{+\infty}f^{(k+1)n}(s_1)=
\cup_{k=0}^{+\infty}f^{nk}(s_1)=S$.\\
$(iii)$ Since $l_1 \subset S$ then $L=\cup_{k=0}^{n-1}l_{\sigma^k(1)}\subset \cup_{k=0}^{n-1}f^{k}(S)$.\\
$(iv)$ $ L\cap f^{k}(S)=l_{\sigma^k(1)}$ for any $0\leq k \leq n-1$.
Indeed, we have $f^{k}(S) \supset f^{k}(l_k)=l_{\sigma^k (1)};
\forall k\ge0$ and since $S, f(S),\dots,f^{n-1}(S)$ are pairwise
disjoint then $L\cap f^{k}(S)=l_{\sigma^k(1)}$, so $f(L\cap
f^{k}(S))=f(l_{\sigma^k (1)})=l_{\sigma^{k+1} (1)}=L \cap
f^{k+1}(S)$.
\end{proof}
 This finish the proof of Case 1.

 \bigskip

\textbf{Case 2:  $M\cap Fix(f)=\emptyset$}.\\
In the following Lemma we will use the notations from \cite{MS}.

\begin{lem}\label{fix}
  If $M$ does not contains a fixed point then there is a fixed point
  $z\in X\backslash M$ such that $M\subset \psi^{-1}_f(z)$.
\end{lem}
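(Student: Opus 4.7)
The plan is to locate a fixed point $z\in X\backslash M$ by a retraction-plus-fixed-point argument on the dendrite $X$, and then to check that the factor $\psi_f$ from \cite{MS} sends every point of $M$ to this $z$.

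First, I would invoke the first-point retraction $r_M:X\to M$ (\cite{Nad}, page $176$), which is continuous and restricts to the identity on $M$. The composition $g:=r_M\circ f|_M:M\to M$ is a continuous self-map of the dendrite $M$, and therefore has a fixed point $m\in M$ (every continuous self-map of a dendrite has a fixed point). Because $M\cap Fix(f)=\emptyset$ by hypothesis, $f(m)\neq m$; combined with $r_M$ being the identity on $M$, the equality $r_M(f(m))=m$ forces $f(m)\in X\backslash M$. Let $C$ be the connected component of $X\backslash\{m\}$ containing $f(m)$; then $C\cap M=\emptyset$ and the point $m$ separates $C$ from $M$. The closure $\overline{C}$ is a sub-dendrite of $X$ by Theorem \ref{CH1}.

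Second, I would upgrade $m$ to an honest $f$-fixed point $z$ inside $\overline{C}\backslash\{m\}$. The zero-entropy hypothesis is crucial here: if some point of $\overline{C}$ had an iterate landing on the $M$-side of $m$, one could combine an arc at $m$ reaching into $C$ with a preimage of an arc at $m$ reaching into $M$ to build an arc horseshoe, exactly in the style of Case $1$ of Lemma \ref{L3}, contradicting $h(f)=0$. Hence a suitable sub-dendrite of $\overline{C}$ is invariant under some iterate of $f$, and the dendrite fixed-point property produces a fixed point $z\in\overline{C}$; since $m\notin Fix(f)$ and $\overline{C}\cap M=\{m\}$, this $z$ lies in $\overline{C}\backslash\{m\}\subset X\backslash M$.

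Third, I would verify $M\subset\psi_f^{-1}(z)$ using the definition of $\psi_f$ from \cite{MS}. For every $y\in M$ the arc $[y,z]$ crosses $m$ by the separation property of $C$, so the assignment $\psi_f(y)$ is determined by the fixed-point structure of $f$ on the $z$-side of $m$; the uniqueness of $z$ as the relevant fixed point in $\overline{C}$ again follows from the no-horseshoe argument applied to arcs in $\overline{C}$, yielding $\psi_f(y)=z$ for all $y\in M$. The main obstacle will be step two — converting the retraction fixed point $m$ into an actual $f$-fixed point $z$ in $\overline{C}\backslash\{m\}$ using only zero topological entropy; the horseshoe-prevention mechanism from Claim $1$ of Lemma \ref{L3} is the decisive dynamical input, as it forbids orbits from oscillating across $m$ and thereby ensures that a sub-dendrite of $\overline{C}$ is effectively $f$-invariant, which is also what underpins the $\psi_f$ verification in step three.
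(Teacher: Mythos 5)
There is a genuine gap, and it lies in the two places where your plan does real work. First, step two does not go through as stated: the existence of one iterate of a point of $\overline{C}$ landing on the $M$-side of $m$ does not by itself produce an arc horseshoe (you need two arcs, with a common endpoint, each of whose images under suitable iterates cover their union, as in the Definition of arc horseshoe and in the actual horseshoe constructions of Lemma \ref{L3}); and even if you could show that some subdendrite of $\overline{C}$ is invariant under some iterate $f^{k}$, the dendrite fixed-point property would only give you a fixed point of $f^{k}$, i.e.\ a periodic point of $f$, not a fixed point of $f$ in $\overline{C}\backslash\{m\}$. Second, step three never engages with what $\psi_f$ actually is: saying that $\psi_f(y)$ is ``determined by the fixed-point structure on the $z$-side of $m$'' is an assertion, not an argument, and without the defining properties of $\psi_f$ from \cite{MS} there is nothing to verify against.

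The paper's own proof is much shorter and uses neither the retraction trick nor zero entropy. It takes any $y\in M$, sets $z:=\psi_f(y)$, and uses two facts about the Mai--Shi map: $\psi_f$ takes values in $Fix(f)$ (so $z\in Fix(f)$, hence $z\notin M$ since $M\cap Fix(f)=\emptyset$), and $\partial\bigl(\psi_f^{-1}(z)\bigr)\subset Fix(f)$. If $M$ met both $\psi_f^{-1}(z)$ and its complement, connectedness of $M$ would force $M\cap\partial\bigl(\psi_f^{-1}(z)\bigr)\neq\emptyset$, hence $M\cap Fix(f)\neq\emptyset$, a contradiction; so $M\subset\psi_f^{-1}(z)$. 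Your opening retraction argument correctly produces a point $m\in M$ with $f(m)$ separated from $M$ by $m$, but that machinery is not needed here, and the remaining steps of your plan would need to be replaced by the boundary-plus-connectedness argument above (or by an honest use of the construction of $\psi_f$ in \cite{MS}) to close the proof.
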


\begin{proof} Let $y\in M$ such that $\psi_f(y)=z \in Fix(f)$.
Suppose contrarily that $M\nsubseteq \psi_f ^{-1}(z)$ then $M \cap
(X \backslash \psi_f ^{-1}(z))\neq \emptyset$ since $M \cap \psi_f
^{-1}(z)\neq \emptyset$ and $M$ is connected then $M\cap
\partial(\psi_f ^{-1}(z))\neq \emptyset$ but $\partial(\psi_f
^{-1}(z))\subset Fix(f)$ hence $M\cap Fix(f)\neq \emptyset$, absurd.
Then $M\subset \psi_f ^{-1}(z)$.  It follow that $\forall y\in M,
\psi_f(y)=z$. This finish the proof of the Lemma.
\end{proof}

 Now, let $z_0:=r_M(z)$ where $r_M$ is the retraction map
 (See \cite{Nad}) and denote by
 $F_{z_0}:=\cup_{n=0}^{+\infty}f^{-n}(z_0)\cap M$ and
 $Y_0:=[\overline{F_{z_0}}]$. We remark that $F_{z_0}\cap L=\emptyset$
  (so $ord(z_0,M)\geq 2$), $F_{z_0}$ is non empty
  and any arc
 joining a point in $M$ and $f(z_0)$ contains $z_0$.

  We have an analogous results compared
 to Case $1$.

 \begin{lem} Let Assumption \ref{L1} be satisfied. Then we have the following properties:
 \begin{enumerate}
 \item $L\subset M\backslash Y_0$,
 \item $M\backslash Y_0$  has finitely many connected components.
 The connected components of
$M\backslash F_{z_0}$ intersecting $L$ are  denoted  by $C_1,
C_2,\dots, C_n$ where $n>1$,we denote by  $l_k= L \cap C_k$,
\item for any $1\leq k \leq n$, there is a unique $j:=\sigma(k)\in
\{1,2,\dots,n\}$ such that $f(C_k) \cap C_j \neq \emptyset$,
\item for any $1\leq k \leq n$, $f(l_k)=l_{\sigma(k)}$,
\item for any $1\leq k \leq n$, $l_k$ is a clopen set relatively to
$L$,
\item $\sigma$ is an $n$-cycle,
\item Let $S=\cup_{k=0}^{+\infty}f^{kn}([l_1])$. Then
 \begin{enumerate}
\item  $S, f(S),\dots, f^{n-1}(S)$ are a pairwise disjoint connected
 subsets,
 \item $f^{n}(S)=S$,
 \item $L\subset \cup_{i=0}^{n-1}f^{i}(S)$
 \end{enumerate}

 \end{enumerate}
 \end{lem}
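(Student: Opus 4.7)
The proof is a direct transcription of Case~1 (Lemmas~\ref{L3},~\ref{C},~\ref{S}) with the fixed point $a$ replaced by the retraction point $z_0 = r_M(z)$, and with the identity $f(a)=a$ replaced by the consequence of Lemma~\ref{fix}: for every $v \in M$, the arc $[f(z_0),v]$ contains $z_0$. The key substitute for the horseshoe machinery used throughout Case~1 is the inclusion $f([z_0,u]) \supset [z_0,f(u)]$ whenever $u,f(u) \in M$. Indeed, $f([z_0,u])$ is connected and contains $f(z_0)$ and $f(u)$, so it contains $[f(z_0),f(u)]$, which by Lemma~\ref{fix} contains $z_0$, hence contains $[z_0,f(u)]$. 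Iterating yields $f^n([z_0,u]) \supset [z_0,f^n(u)]$ whenever the orbit of $u$ stays in $M$, which holds for $u \in L$ (since $f(L)=L \subset M$) and along the orbit of any $u \in F_{z_0}$ up to its hitting time of $z_0$ (since $F_{z_0} \subset M$).

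With this substitute, the analogue of Lemma~\ref{Le1} with reference point $z_0$ goes through verbatim: its proof used only the uncountability of $\omega_f(y)$ for every $y \in L$, which holds here because Theorem~\textbf{A} gives $L \cap P(f) \subset E(X)^{\prime}$, a finite set, so a minimal subset of $\omega_f(y) \subset L$ is infinite and periodic-point-free, hence uncountable. The horseshoe argument of Lemma~\ref{L3} then transcribes to yield (1): if some $w \in F_{z_0} \setminus \{z_0\}$ had $[z_0,w] \cap L \neq \emptyset$, pick $y$ in the intersection with $f^p(w)=z_0$, set $I=[z_0,y]$ and $J=[y,w]$, and combine the substitute with the Le1 analogue to inflate $f^N(I) \cap f^N(J)$ to contain $I \cup J$, contradicting $h(f)=0$. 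Claim~2 of Lemma~\ref{L3} then rules out $L \cap \overline{F_{z_0}} \neq \emptyset$ in the same way, giving $L \subset M \setminus Y_0$. Lemma~\ref{YD} applies to $Y_0$ since $E(Y_0) \subset \overline{F_{z_0}}$ is disjoint from $L \supset E(M)^{\prime}$, itself contained in $E(X)^{\prime}$ by Lemma~\ref{Y<X}; this yields (2). Items (3)--(6) are then word-for-word copies of Lemma~\ref{C}: uniqueness of $\sigma(k)$ via maximality and Lemma~\ref{most}, invariance $f(l_k)=l_{\sigma(k)}$ via $f(L)=L$, clopenness of each $l_k$ via Lemma~\ref{most}, and $n$-cyclicity of $\sigma$ via Lemma~\ref{WI}. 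None of these steps required $a$ to be fixed.

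The main obstacle is (7), since Lemma~\ref{S} used the identity $f^{-1}(F_a)=F_a$ which rested on $a \in Fix(f)$. Connectedness of $S = \cup_{k \ge 0} f^{kn}([l_1])$ is unproblematic, as $f^n(l_1)=l_1$ forces $[l_1] \subset f^n([l_1])$ by monotonicity of the convex hull, producing an increasing nested union. For pairwise disjointness of $S,f(S),\dots,f^{n-1}(S)$, I would establish by induction on $k$ that $f^{k}([l_1]) \cap F_{z_0} = \emptyset$: the base case follows from $[l_1] \subset \overline{C_1}$ together with (1) extended to $\overline{C_1}$, and the inductive step uses that any intersection point would produce, via the horseshoe substitute, a preimage in $f^{k-1}([l_1]) \cap M$ lying in $F_{z_0}$, contradicting the inductive hypothesis. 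Once this is in hand, the argument of Lemma~\ref{S}(i) transfers: an intersection $f^i(S) \cap f^j(S) \neq \emptyset$ would join $C_{\sigma^i(1)}$ to $C_{\sigma^j(1)}$ through an arc in $M$ avoiding $F_{z_0}$, contradicting maximality. Parts (b) and (c) of (7) are then formal consequences of disjointness combined with $f(L)=L$ and the $n$-cyclicity of $\sigma$.
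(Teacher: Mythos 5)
Your overall strategy coincides with the paper's: the paper itself disposes of items (3)--(7) by saying they ``are similar as in Lemmas \ref{C} and \ref{S}'', and your key substitute inclusion $f([z_0,u])\supset [z_0,f(u)]$ (obtained from $f([z_0,u])\supset [f(z_0),f(u)]\ni z_0$) is exactly the device the paper uses in its Claim 2 for item (1). There is, however, one place where ``transcribe Case 1'' genuinely fails and you have glossed over it: the step $\overline{F_{z_0}}\cap L=\emptyset$. The Case 1 argument (Claim 2 in the proof of Lemma \ref{L3}) concludes by taking $t\in F_a\cap U$ and deriving a contradiction from $f^{n_0}(t)\in F_a$; that membership rests essentially on $a$ being fixed (if $t\in f^{-m}(a)$ with $m<n_0$ then $f^{n_0}(t)=f^{n_0-m}(a)=a\in F_a$). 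Since $z_0$ is not fixed, $F_{z_0}$ is not forward invariant: for $t\in f^{-m}(z_0)$ with $m<n_0$ the point $f^{n_0}(t)=f^{n_0-m}(z_0)$ has no reason to lie in $F_{z_0}$, so a verbatim copy of that argument breaks. The paper replaces it by a different argument: take $z_i\to y$ in $F_{z_0}$ with $f^{n_i}(z_i)=z_0$ and split according to whether $(n_i)$ is bounded (then $f^m(y)=z_0\in L$, which is absurd since $z_0\notin L$) or unbounded (then for $n_i>p$ the point $f^p(z_i)$ still lies in $F_{z_0}$ while $f^k(y)\in L\cap(z_0,f^p(z_i))$, contradicting Claim 2 directly). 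Your proposal needs this dichotomy, or something equivalent; ``in the same way'' is not enough here.

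Two smaller points. First, your justification that $\omega_f(y)$ is uncountable for $y\in L$ is off: from ``$L\cap P(f)\subset E(X)^{\prime}$ finite'' alone, a minimal subset of $\omega_f(y)$ could still be a periodic orbit sitting inside $E(X)^{\prime}$. The correct route is that $E(X)$ closed with $E(X)^{\prime}$ finite forces $E(X)$ to be countable, so part (2) of Theorem A gives $L\cap P(f)=\emptyset$ outright; then a minimal subset of $\omega_f(y)$ is infinite without isolated points, hence uncountable. Second, in your induction for (7)(a) the inductive step needs the preimage of an intersection point to lie in $M$ in order to place it in $F_{z_0}$, and the inductive hypothesis does not provide that. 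The clean fix (and what the paper intends, despite its loose phrase ``$f^{-1}(F_a)=F_a$'') is to pull back all the way to $s_1$: if $u\in s_1\subset M$ and $f^{i}(u)\in F_{z_0}$, then $f^{i+m}(u)=z_0$ for some $m\geq 0$ and $u\in M$, so $u\in F_{z_0}$, contradicting $s_1\cap F_{z_0}=\emptyset$. No induction and no horseshoe is needed for that step.
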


 \begin{proof}

 $(1)$
\textbf{Claim 1.} There is $n_0 \geq0$ such that $\forall n\geq n_0$
we have $z_0 \in (f(z_0),f^n(x))$.\\ Since $z_0 \notin L$ and $L$ is
compact then $\delta:=d(z_0,L)>0$. Remark that for any $y\in L, z_0
\in (y,f(z_0))$. Then $\forall y\in L, \exists 0<\mu=\mu(y)<\delta$
such that $\forall t\in B(y,\mu), z_0\in (t,f(z_0))$. The subset
$V:=\cup_{y\in L}B(y,\mu(y))$ is an open neighborhood of $L$
disjoint with $z_0$. There is $n_0\geq 0$ such that $\forall n\geq
n_0, f^{n}(x) \in V$. Then $\forall n\geq n_0$, there is $y\in L,
f^n(x) \in B(y,\mu(y))$ hence $z_0 \in (f^n(x),f(z_0)$.

\medskip

 \textbf{Claim 2.} For any $t\in F_{z_0}$, $[z_0,t]\cap L=\emptyset$. \\
 Suppose contrarily that there is $t\in f^{-k}(z_0)
\cap M ; k>0$ and $y\in L$ such that $y\in [z_0,t]$. Denote by
$I=[z_0,y]$, $J=[y,t]$. We have $f^{k}(J)\supset [z_0,f^{k}(y)]$
then $f^{k+1}(J)\supset [f(z_0),f^{k+1}(y)]\ni z_0$ hence
$f^{k+1}(J)\supset [z_0, f^{k+1}(y)]$ so by induction, for any
$n\geq k, f^{n}(J)\supset [z_0,f^n(y)]$. Similarly, we have
 $f(I)\supset [f(z_0),f(y)]\ni z_0$ then $f(I)\supset [z_0,f(y)]$.
 Hence we prove
 by induction that for any $n\geq0$, $f^{n}(I)\supset [z_0,f^{n}(y)]$.
As the proof of Lemma \ref{L3}, Claim 1, we show easily that $I, J$
form an arc
 horseshoe hence $h(f)>0$, a contradiction. \\

 \textbf{Claim 3.} $\overline{F_{z_0}}\cap L=\emptyset$. \\
 Suppose contrarily that there is
 $y\in L\cap\overline{F_{z_0}} $. Let $(z_{i})_{i>0}$ be a
 sequence in $F_{z_0}$ converging to $y$ and
 $f^{n_i}(z_i)=z_0; \forall i>0$.
 Let prove that there is a neighborhood
 $V$ of $y$ in $X$ and $k\geq0$  such that $f^k(V)\subset I$ where
 $I$ is a free open arc in $X$ such that $
 [z_0,s]\cap L\neq \emptyset$ for some $s\in F_{z_0}$. Indeed, since $\omega_f(y)$ is uncountable
 then there is a free open arc $I$ in $X$ such that $I\subset M$ and
 $\omega_f(y)\cap I$
 is uncountable. Let $y_1, y_2 $ two distincts points in the intersection
 such that $y_1 \in (z_0,y_2)$ and $I_1, I_2$ two open disjoint arc in $I$ containing $y_1, y_2$
 respectively. Let  $k,p>0$ such that $f^k(y)\in I_1, f^p(y)\in I_2$.
 By continuity of $f^p$ there is an open set, $V$, in $X$ containing $y$ such that
 $f^p(V)\subset I_2$. If the sequence $(n_i)_{i>0}$ is bounded then by
 taking a subsequence we may assume that $n_i=:m$ for any $i>0$. So we have
 $f^m(z_i)=z_0, \forall i>0$, since $z_i \to_{i\to +\infty} y$
 then $f^m(y)=z_0\in L$, absurd. If the sequence $(n_i)_{i>0}$ is unbounded,
  we may assume that it is non decreasing and $n_i>p, \forall i>0$.
  Let $i>0$ such that $z_i \in V$ then $f^p(z_i) \in f^p(V)\subset I_2
\subset M$. So
  we have $f^k(y)\in (z_0,f^p(z_i))$ and $f^{n_i -p}(f^{p}(z_i))=z_0$ then
  $f^p(z_i)\in F_{z_0}$, this contradict Claim 2. Hence we finish
the proof of Claim 3.\\
  Now, we have $L\subset M\backslash [z_0,t], \forall t\in \overline{F_{z_0}}$
then $L\subset M\backslash \cup_{t\in
\overline{F_{z_0}}}[z_0,t]=M\backslash Y_0$.

$(2)$ $Y_0$ is a subdendrite of $M$ satisfying  $E(Y_0)\cap
E(M)^{\prime}\subset \overline{F_{z_0}} \cap L=\emptyset $ then
$M\backslash Y_0$ has finitely many
connected components.\\
The proof of $(3)-(7)$ are similar as in Lemmas \ref{C} and \ref{S}.

\end{proof}


 \subsection{\textbf{Proof of Theorem B}}

Let $S$ be a connected subset of $X$ satisfying conditions
$(i)-(iii)$ of proposition \ref{prop}. Denote by $\alpha_1:=n_1$ and
$D_1=\overline{S}$. Then we have $L\subset \cup_{i=0}^{\alpha_1
-1}f^{i}(D_1)$.
 Set $f_1:=f^{\alpha_1}_{|D_1}:~D_1 \to D_1 $, it is a
  dendrite map such
 that $E(D_1)$ is  countable closed  and $E(D_1)^{\prime}$  finite. Let $t_1=f^{p_1}(x)\in D_1$. Since $\omega_{f_1}(t_1)$ is uncountable then by Proposition \ref{prop} there is a pairwise disjoint connected subsets $S_2, f_{1}(S_2),\dots, f_{1}^{n_2
 -1}(S_2); n_2\geq2$, $f_{1}^{n_2}(S_2)=S_2$ in $D_1$ such that $\omega_{f_1}(t_1)
 \subset \cup_{i=0}^{n_2 -1}f_{1}^{i}(S_2)$. So for any $0\leq j \leq n_1 -1,
 \omega_{f^{n_1}}(f^{j}(t_1))\subset \cup_{i=0}^{n_2 -1}f^{i n_1
 +j}(S_2)\subset \cup_{i=0}^{n_2 -1}f^{i n_1
 +j}(D_2)$ where $D_2=\overline{S_2}$. Hence $L=\omega_f (t_1)\subset
 \cup_{j=0}^{n_1 -1} \cup_{i=0}^{n_2 -1}f^{i n_1
 +j}(D_2) =\cup_{i=0}^{\alpha_2 -1}f^{i}(D_2)$. The subdendrite $D_2$ has period $\alpha_2:=n_1
 n_2$ and $\cup_{k=0}^{n_2 -1} f^{i \alpha_1}(D_2)\subset D_1$.
 Since card$(f^{i}(D_1)\cap f^{j}(D_1))\leq 1$ for any $0\leq i \neq j \leq \alpha_1
 -1$ then card$(f^{i}(D_2)\cap f^{j}(D_2))\leq 1$ for any $0\leq i \neq j \leq \alpha_2
 -1$. By induction we build a sequence of subdendrites
 $(D_k)_{k\geq1}$ satisfying the conditions of Theorem B.\\


\section{\textbf{On Li-Yorke pair implies chaos.}}

To prove Theorem C we need the following Lemma. \\
\begin{lem}
We use the notations from Theorem B. There is $k>0$ and $0\leq i <
\alpha_k$ such that $f^{i}(D_k)$ is a free arc.
\end{lem}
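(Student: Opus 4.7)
The plan is to argue by contradiction: assume that for every $k\geq 1$ and every $0\leq i<\alpha_k$, $f^{i}(D_k)$ is not a free arc. Each piece is a non-degenerate subdendrite (it contains an uncountable portion of $L$, by disjointness of the parts $L\cap f^i(D_k)$ together with condition~(4) of Theorem~\ref{dec}); and a non-degenerate subdendrite of $X$ containing no branch point of $X$ is itself a free arc. Hence each piece must meet $B(X)$.

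The backbone of the argument is the following pointwise bound: at every level $k$ and for every $x\in X$, at most $\mathrm{ord}(x,X)$ pieces $f^i(D_k)$ contain $x$. Indeed, two such pieces both extend from $x$ into branches of $X$ at $x$ (finite in number by Lemma~\ref{fo}), and condition~(5) of Theorem~\ref{dec} prevents them from sharing any common branch: a shared branch would give a common sub-arc $[x,c]$ with $c\neq x$ inside their intersection. Since $B(X)$ is discrete (Proposition~\ref{CH4}) and $E(X)$ is countable (being closed with finite derived set), the uncountable sub-arc $(x,c)$ must contain a point $p$ of order~$2$ outside $B(X)\cup E(X)$; such $p$ is not in $\overline{B(X)}$ by Proposition~\ref{CH2} combined with Lemma~\ref{de}, so $p$ admits a free-arc neighborhood in $X$ lying along $[x,c]$. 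This produces a non-empty open subset of $X$ inside the pairwise intersection of pieces, contradicting~(5). Applied to $x=e\in E(X)'$ (order~$1$), at most one piece at each level contains $e$; applied to $x=b\in B(X)$, at most $\mathrm{ord}(b,X)$ pieces contain~$b$.

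Set $m:=|E(X)'|<\infty$. Since $\alpha_k\geq 2^k$, I may pick $k_0$ with $\alpha_{k_0}>m$ and, by the first consequence of the bound, a piece $T:=f^{i_0}(D_{k_0})$ disjoint from $E(X)'$. By Lemma~\ref{Y<X}, $E(T)'\subset E(X)'\cap T=\emptyset$, so $T$ is a tree; and since $T$ is at positive distance from $E(X)'$ and $B(X)$ accumulates only on $E(X)'$, the set $B(X)\cap T$ is finite.

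To finish, I iterate the nesting from condition~(2) of Theorem~\ref{dec}: for each $\ell\geq 1$, $T$ contains exactly $n_{k_0+1}n_{k_0+2}\cdots n_{k_0+\ell}$ descendants at level $k_0+\ell$, namely those $f^{j}(D_{k_0+\ell})$ with $j\equiv i_0\pmod{\alpha_{k_0}}$. Each descendant is a subdendrite of $T$, so its branch points of $X$ lie in the finite set $B(X)\cap T$; by the contradiction hypothesis each descendant contains at least one such branch point; and by the pointwise bound each $b\in B(X)\cap T$ lies in at most $\mathrm{ord}(b,X)$ descendants. Double counting yields
\[
n_{k_0+1}\cdots n_{k_0+\ell}\;\leq\;\sum_{b\in B(X)\cap T}\mathrm{ord}(b,X)\;<\;+\infty,
\]
which fails as soon as $2^{\ell}$ exceeds the fixed right-hand side. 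The delicate step will be the pointwise bound, specifically producing a cut point with a free-arc neighborhood inside $[x,c]$; once that is in hand the rest is combinatorial bookkeeping on the nested decomposition.
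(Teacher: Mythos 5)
Your proposal is correct and follows essentially the same route as the paper: use $\alpha_k\geq 2^k$ together with the bound that each point $x$ lies in at most $\mathrm{ord}(x,X)$ of the pieces at a given level to first find a piece $T$ avoiding the finite set $E(X)'$ (hence a tree with $B(X)\cap T$ finite), and then pigeonhole over $\sum_{b\in B(X)\cap T}\mathrm{ord}(b,X)$ at a deeper level to find a piece avoiding $B(X)$ altogether. The only real difference is that you derive the pointwise bound from the empty-interior condition (5), whereas the paper implicitly uses the stronger fact, established in its proof of Theorem B, that distinct pieces meet in at most one point; your extra argument for that step is sound.
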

\begin{proof}
 \textbf{Case 1.} If $X$ is a tree. The number $N:=\sum_{b\in B(X)} ord(b,X)$ is finite, let $k>0$ such that $\alpha_k >N$. Then there is
 $0\leq i <\alpha_k$ such that $f^{i}(D_k) \cap B(X)=\emptyset$ then
 $f^{i}(D_k)$ is a free arc in $X$.

\medskip

 \textbf{Case 2.} If $X$ is not a tree. Let $d:=card(E(X)^{\prime})\geq
 1$. For any $k\geq1$ we have $n_k \geq 2$ so $\alpha_k \geq 2^k$.
 Let $k>0$ such that $d<2^k$. Since $card(f^{i}(D_k)\cap f^{j}(D_k))\leq
 1$ then there is $0\leq i <\alpha_k$ such that $f^{i}(D_k)\cap
 E(X)^{\prime}=\emptyset$. By Lemma \ref{YD}, $f^{i}(D_k)$ is a tree. By Case 1, there is
 $p\geq k$ and $0\leq j <\alpha_p$ such that $f^{j}(D_p)$ is an arc
 included in $f^{i}(D_k)$. Since $f^{j}(D_p)\cap E(X)^{\prime}=\emptyset$
 then the set $f^{i}(D_p)\cap B(X)$ is finite. So there is $q>p$
 and $0\leq s<\alpha_q$ such that $f^{s}(D_q)$ is a free arc in
 $X$.
 \end{proof}
\medskip

Proof of Theorem \textbf{C}. Let $(x,y)$ be a Li Yorke pair for $f$
such that $L:=\omega_f (x)$ is uncountable. Denote by
$I=f^{i}(D_k)=[u,v]$ a free arc in $X$. There is $n\geq 0$ such that
$x_n \in I$. Denote by $d=\alpha_k$, then $g=f^{d}_{|I}: I \to I$ is
an interval map. Since $(x,y)$ is a Li Yorke pair for $f$ then
$(x_n, y_n)$ is proximal for $f^{d}$. So $\omega_{f^d}(x_n) \cap
\omega_{f^d}(y_n)\neq \emptyset$. We
distinguish two cases: \\
\medskip

\textbf{Case 1:}  $O_{f^{d}}(y_n)\cap I \neq \emptyset $.  Let
$k\geq 0$ such that $f^{dk}(y_n) \in I$. So
$(f^{dk}(x_n),f^{dk}(y_n))\in I^2$ is a Li Yorke pair for $g:~I \to
I$, by \cite{KS} $g$ is chaotic hence $f$ is chaotic.\\
\medskip

\textbf{Case 2.}  $O_{f^{d}}(y_n)\cap I = \emptyset $. Since
$\omega_{f^d}(x_n) \cap \omega_{f^d}(y_n)\neq \emptyset$ and
$\omega_{f^d}(x_n)\subset I$ then $\omega_{f^d}(y_n) \cap I \subset
\{u,v\}$ so $\omega_{f^d}(x_n) \cap \omega_{f^d}(y_n)$ is finite and
$f^d$-invariant so $\omega_{f^d}(x_n)$ contains a periodic point,
say $u$. Then  $(x_n,u)\in I^2$ is a Li Yorke pair for $g$, by
\cite{KS} $g$ is chaotic, hence $f$ is chaotic.

\smallskip

\section{\textbf{Example}}

\textbf{Example 4.} (Due to I. Naghmouchi)
 We will construct  a dendrite $X$ with $E(X)$ is  countable closed set such that $(E(X))^{(2)}$
is reduced to one point and a map $f$ on $X$  having a Li-Yorke pair
but not Li-Yorke chaotic.

\begin{proof}
 \textbf{ Construction of the dendrite $X$.} \\ Denote by
$\displaystyle D_0= ([-1,1]\times \{0\}) \cup (\bigcup_{n\geq
1}\{1-\frac{1}{n}\}\times[0,\frac{1}{n}]).$
  Let $I=[(0,0),(\frac{1}{2},-1)]$ and $X=I\cup (\cup_{n\geq 1}X_n)$
  where for any $n\geq 1$,
  \begin{itemize}
  \item $X_n$ is a homeomorphic copy of $D_0$, by a homeomorphism $\varphi_n$,
  \item  $a_k^n:=\varphi_n((1-\frac{1}{k},\frac{1}{k}))$, $\forall k\geq 1$ and $a^n=\varphi_n((1,0))$, $a^0=(0,0)$,
  \item $X_n \cap I=\{b_n\}$ where $\varphi_n((-1,0))=b_n$,
  \item the sequence $(X_n)_{n\geq1}$ are pairwise disjoint and $\lim_{n\to +\infty}$diam$(X_n)=0$,
  \item $ c_k^n=\varphi_n((1-\frac{1}{k}, 0))$, $\forall n, k \geq 1$.
  \end{itemize}

  We can see that the set $X$ is a dendrite such that $E(X)=\{a_k^n; n,k\geq1 \}\cup
  \{ a^n; n\geq 1\}\cup\{a^0 \}$,  $E(X)^{\prime}=\{a^n; n\geq 1\}\cup \{a^0 \}$ and
   $E(X)^{(2)}=\{a^0 \}.$


\medskip

  \textbf{ Construction of the map $f$.} (See Figure $3$) \\
 The map $f$  is defined as  follows:
 \begin{itemize}
 \item $f(a^1)=f(a^0)=a^0$ and $f(b_1)=b_1$,
 \item $f([b_2,c_1^1])=\{b_1\}$,
 \item $\forall n\geq1, f([b_n,b_{n+1}])=\{b_n\},$
 \item $\forall n, k\geq 1, f$ sent linearly $[b_{n+1},a^{n+1}]$ to
 $[b_{n},a^n]$ such that $ f(c_k^{n+1})=c_{k+1}^{n}$ and $f$ sent linearly $[c_k^{n+1},a_k^{n+1}]$ to $[c_{k+1}^{n},a_{k+1}^{n}]$,
 \item $\forall k\geq 1, f$ sent linearly $[c_1^1,a^1]$ to $[b_1,
 a^0]$ such that  $f(c_k^1)=b_k$ and $f$ sent linearly $[c_k^1, a_k^1] $ to $[b_k, a_{1}^{k+1}].$
 \end{itemize}

We see that $f$ is a dendrite map   having a Li-Yorke pair (take for
example $(a^0,a_1^1)$ ) but $f$ is not not Li-Yorke chaotic. Indeed,
if $(x,y)\in X \times X$ is a Li yorke pair then $(x,y)\in
E(X)^{2}$.

\end{proof}

\begin{figure}[H]
\begin{center}
\includegraphics[width=14cm,height=8cm]{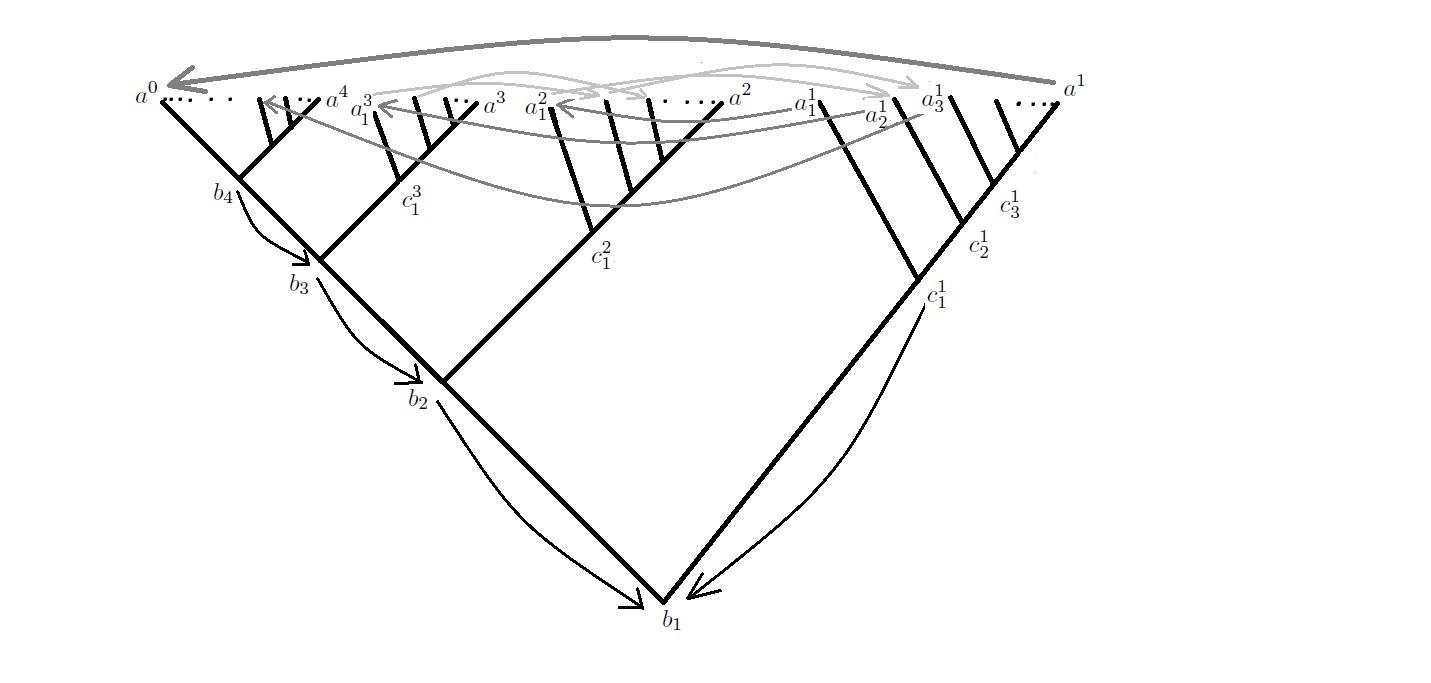}
\caption{Dendrite with $E(X)^{\prime }$ is infinite}
\end{center}
\end{figure}

\begin{lem} \label{D^2}
Every dendrite $X$ with closed set of endpoints and $E(X)^{(2)}$ is
 non  empty  contains a subdendrite $Y$ with closed set of endpoints and $E(Y)^{(2)}$ is reduced to one point.
\end{lem}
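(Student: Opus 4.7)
My plan is to construct a closed subset $F\subset E(X)$ whose second derived set is a single point, and then take $Y:=[F]$. Granted this, $Y$ is a subdendrite of $X$ (Lemma \ref{hull}(1) exhibits $[F]$ as a connected closed subset), Theorem \ref{CH1} ensures $E(Y)$ is closed, and Lemma \ref{hull}(2) gives $E(Y)=F$ since $F\subset E(X)$; hence $E(Y)^{(2)}=F^{(2)}$ is reduced to one point, as required.

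To build $F$, I pick a point $a\in E(X)^{(2)}$. Since $a$ is an accumulation point of $E(X)'$, I select a sequence of pairwise distinct points $(a_n)_{n\geq 1}\subset E(X)'\setminus\{a\}$ with $a_n\to a$. Each $a_n$ lies in $E(X)'$ and is therefore an accumulation point of $E(X)$, so for every $n\geq 1$ I choose a sequence of pairwise distinct points $(b_{n,k})_{k\geq 1}\subset E(X)\setminus\{a_n\}$ with $b_{n,k}\to a_n$; after thinning, I may assume
$$d(b_{n,k},a_n)<2^{-(n+k)}$$
and that the collection $\{a\}\cup\{a_n\}\cup\{b_{n,k}\}$ consists of pairwise distinct points. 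I then set
$$F:=\{a\}\cup\{a_n:n\geq 1\}\cup\{b_{n,k}:n,k\geq 1\}\subset E(X).$$

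Next I would verify that $F$ is closed and that $F'=\{a\}\cup\{a_n:n\geq 1\}$ in a single argument. Given a sequence of pairwise distinct points of $F$ converging to some $y\in X$, either the sequence meets $\{a\}\cup\{a_n\}$ infinitely often (forcing $y$ into this closed set, whose only accumulation point is $a$), or, up to a subsequence, it has the form $(b_{n_m,k_m})_m$; in the latter case either $(n_m)$ is bounded, whence $y$ equals some $a_{n_0}$, or $n_m\to\infty$, and then the bound $d(b_{n_m,k_m},a_{n_m})<2^{-(n_m+k_m)}$ combined with $a_{n_m}\to a$ forces $y=a$. Since each $a_n$ is isolated in $\{a\}\cup\{a_n\}$ (the $a_n$'s accumulate only at $a\neq a_n$), it follows that $F^{(2)}=(F')'=\{a\}$.

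The main obstacle is exactly the bookkeeping in the previous paragraph: the auxiliary sequences $(b_{n,k})_{k\geq 1}$ must be chosen close enough to their targets $a_n$ that no spurious cluster point of $F$ appears outside $\{a\}\cup\{a_n\}$, and this is what the geometric-series estimate $d(b_{n,k},a_n)<2^{-(n+k)}$ secures via the diagonal argument above. Once $F$ is in hand, Lemma \ref{hull}(2) and Theorem \ref{CH1} conclude the proof immediately.
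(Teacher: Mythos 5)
Your construction is correct and essentially the same as the paper's: the author also fixes a point $e\in E(X)^{(2)}$, takes a sequence $(e_n)$ in $E(X)'$ converging to it, attaches to each $e_n$ an auxiliary sequence from $E(X)$ confined near $e_n$ (the paper uses pairwise disjoint balls $B(e_n,\varepsilon_n)$ where your summable bound $d(b_{n,k},a_n)<2^{-(n+k)}$ does the same job of excluding spurious cluster points), and then passes to the convex hull via Lemma \ref{hull}. No gaps; your closedness/derived-set bookkeeping is, if anything, spelled out more carefully than in the paper.
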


\begin{proof}
\medskip

 \textbf{Claim.}
 \emph{If $F$ is a closed set such that $F^{(2)}$ is non empty then it contain a closed subset $A$ such that $A^{(2)}$ is reduced to one point.}\\
  Let $e\in F^{(2)}$, there is a sequence of pairwise different elements  $(e_n)_{n\geq0}$ in $F^{\prime }$ converging to
 $e$. Let $(\varepsilon_n)_{n\geq 0}$ a sequence of non negative reels such that
 the sequence of balls $(B(e_n,\varepsilon))_{n\geq 0}$ are pairwise disjoint.
For any $n\geq0$, since $e_n\in F^{\prime}$ then there is a sequence
$(e_n^k)_{k\geq0}$ in $B(e_n,\varepsilon_n)\cap F$ converging to
$e_n$. Hence the set $A=\{e_n^k; n,k\geq 0\} \cup \{e_n; n\geq 0 \}
\cup \{e\}$ satisfy the condition of the Claim.
 Now, let $A$ be a closed subset in $E(X)$ such that $A^{(2)}$ is one point. By Lemma \ref{hull}, $Y:=[A]$ is a subdendrite of $X$ with $E(Y)=A$.
\end{proof}


 By Lemma \ref{D^2}, there is a non chaotic dendrite map $g:~Y \to Y$ having a
 Li-Yorke pair $(x,y)\in Y^2$. Let $r_Y:~X\to Y$ be the retraction map
 and $f=g \circ r_Y:~X \to Y \subset X$. Then $(x,y)$ is also a Li Yorke pair for $f$
 but $f$ is not Li-Yorke chaotic. \hfill $\square $

\bigskip


 \end{document}